\providecommand{\U}[1]{\protect\rule{.1in}{.1in}}
\def\s{\sigma}
\def\D{\Delta}
\def\v{\varepsilon}
\def\bbb{\begin{eqnarray*}}
\def\eee{\end{eqnarray*}}
\newtheorem{theorem}{Theorem} [section]
\newtheorem{corollary}{Corollary}[section]
\newtheorem{definition}{Definition} [section]
\newtheorem{lemma}{Lemma}[section]
\newtheorem{proposition}{Proposition} [section]
\newtheorem{remark}{Remark} [section]
\numberwithin{equation}{section}
\begin{document}

\title{Singularity of the $n$-th eigenvalue of  high dimensional Sturm-Liouville problems}
\author{Xijun Hu$^\dag$, Lei Liu$^\dag$, Li Wu$^\dag$ and Hao Zhu$^\ddag$\footnote{Corresponding author. }\\[4pt]
$^\dag$Department of Mathematics, Shandong University\\
                  Jinan, Shandong 250100, P. R. China
		\\[4pt]
					  $^\ddag$Chern Institute of Mathematics, Nankai University\\[2pt]
					  Tianjin, 300071, P. R. China
 					 }
\date{}
\maketitle

\begin{abstract}
It is natural to consider continuous dependence of the $n$-th eigenvalue on $d$-dimensional ($d\geq2$) Sturm-Liouville problems  after the results on $1$-dimensional case by Kong, Wu and Zettl \cite{Kong1999}. In this paper, we find all the boundary conditions such that the $n$-th eigenvalue is not continuous,  and give complete characterization of  asymptotic behavior of the $n$-th eigenvalue. This renders a precise description of the jump phenomena of the $n$-th eigenvalue near such a boundary condition. Furthermore, we divide the space of boundary conditions into $2d+1$ layers and  show that the $n$-th eigenvalue is continuously dependent  on Sturm-Liouville equations and on boundary conditions when restricted into  each layer. In addition, we prove that the analytic and geometric multiplicities of an eigenvalue are equal.  Finally, we obtain derivative formula and  positive direction of eigenvalues with respect to boundary conditions.
\end{abstract}

{\bf{Key words.}} High dimensional Sturm-Liouville problems,  multiplicity,  the $n$-th eigenvalue, continuity, singularity.

{\bf AMS subject classifications.} 34B09, 34B24, 34L15
\bigskip

\noindent
{\bf Contents}
\medskip

\noindent
1\ \ Introduction  \dotfill 2\\[1.4ex]
2\ \ Space of Sturm-Liouville problems  \dotfill 7\\[1.4ex]
3\ \ Basic properties of eigenvalues \dotfill 11\\[1.4ex]
4\ \ Analysis on $1$-dimensional results \dotfill 13\\[1.4ex]
5\ \ Equality of multiplicities of an eigenvalue \dotfill 17\\[1.4ex]
6\ \ Continuity of the $n$-th eigenvalue \dotfill 19\\[1.4ex]
7\ \ Singularity of the $n$-th eigenvalue \dotfill 22\\[1.4ex]
8\ \ Derivative formulas and comparison  of  eigenvalues \dotfill 33\\[1.4ex]
Acknowledgements \dotfill 35\\[1.4ex]
References\dotfill 36

\section{Introduction}
In this paper, we consider the  $d$-dimensional Sturm-Liouville equation
\begin{equation}\label{SL-equation}
-(Py')'+Qy=\lambda Wy, \;\;on\;\;[a,b],
\end{equation}
where $d\geq2$, $\lambda$ is the spectral parameter, and $P, Q$ and $W$ satisfy the following assumptions:

\noindent{\bf Hypothesis 1}:
\begin{itemize}
\item[\rm{(i)}]
$P, Q$ and $W$ are  $d\times d$-matrix symmetric  real-valued functions on $[a,b]$,
\item[\rm{(ii)}]  $P(t)\geq \mu_1$ and $W(t)\geq \mu_2$  a.e. on $t\in [a,b]$ for some $\mu_1,\mu_2>0$,
\item[\rm{(iii)}] $P, Q,  W \in L^\infty([a,b],\mathbb{R}^{d\times d})$.
\end{itemize}
The self-adjoint boundary condition is given by
\begin{equation}\label{boundary condition}
(A\; | \;B) Y(a,b) =0,
\end{equation}
where $Y(a,b)= ( -y(a)^T,
y(b)^T, (Py')(a)^T,
(Py')(b)^T)^T$, $A$ and $B$ are
$2d\times2d$ complex  matrices such that
\begin{equation} \label{self-adjoint-BC}
 {\rm rank}(A\;|\; B)=2d,\;\; AB^*=BA^*,
\end{equation}
$A^*$ is the complex conjugate transpose of $A$.
The spectrum of the Sturm-Liouville problem is bounded from below and  consists of discrete eigenvalues, which are ordered in the following
non-decreasing sequence
$$\lambda_1\leq\lambda_2\leq\lambda_3\leq\cdots\leq\lambda_n\leq \cdots,$$
with $\lambda_n\to\infty$  as $n\to\infty$, counting repeatedly according to their analytic multiplicities.

When studying the perturbation of  eigenvalues of Sturm-Liouville  problem (\ref{SL-equation})--(\ref{boundary condition}), the $n$-th eigenvalue  changes as coefficients in the equation (\ref{SL-equation}) or the boundary condition (\ref{boundary condition}) are subjected to perturbations.   The indices (i.e. $n$) of eigenvalues may change drastically in a continuous eigenvalue branch due  to the high dimension, and the eigenvalues with the same index may jump from one to another  branch in a complex way. The jump  phenomena of the $n$-th eigenvalue in high dimension are more  interesting from  geometric aspects and more  complicated than $1$-dimensional Sturm-Liouville problems. Moreover,  in computing  eigenvalues, their indices  are in general   unknown and still need to be determined due to   the importance of  the first few eigenvalues in  physical models. The high dimension, however, leads oscillation theory of  $1$-dimensional Sturm-Liouville problems to becoming invalid    and   thus makes a difficult task in numerical calculation. So the question   ``{\it what the singular (discontinuity) set of the $n$-th eigenvalue is in high dimensional  case}"  not only has strong motivation from physics and  numerical analysis, but also is  theoretical challenging. Indeed, after the previous work in $1$-dimensional case \cite{Kong1999}, it has been an open problem for several years.   We shall solve it in this paper and  give complete characterization of  asymptotic behavior of the $n$-th eigenvalue.

This question is completely answered for 1-dimensional Sturm-Liouville problems. The first breakthrough is due to Rellich \cite{Rellich1942}.  In 1950, he   gave an  example for the following 1-dimensional Sturm-Liouville problem at the ICM \cite{Rellich1950}:
\begin{align*}
-u''=\lambda u,\;\;\;\;on\;\;[0,1],
\end{align*}
 with the boundary condition
$u(0)=0,\kappa u'(1)=u(1),$
where $\kappa\in \mathbb{R}$. Though the $n$-th eigenvalue is  continuous near $\kappa=0$ from the left direction for each $n\geq1$, it is discontinuous from the right direction, and  has the following asymptotic behavior:
\begin{align*}
\lim\limits_{\kappa\to0^+}\lambda_1(\kappa)=-\infty,\;\;
\lim\limits_{\kappa\to0^+}\lambda_n(\kappa)=\lambda_{n-1}(0),\;\; n \geq 2.
\end{align*}
See also Figure 1 in P. 292 of  \cite{Kato1984}.
In 1997, Everitt et al. in \cite{Everitt1997} investigated 1-dimensional Sturm-Liouville equation (\ref{SL-equation})
with separated boundary condition
$
\cos\alpha y(a)-\sin\alpha (py')(a)=0,\cos\beta y(b)-\sin\beta (py')(b)=0.
$
By using Pr\"{u}fer transformation to (\ref{SL-equation}),
they obtained that the $n$-th eigenvalue $\lambda_n$ is continuous on $\alpha\times\beta\in[0,\pi)\times(0,\pi]$ for each $n\geq1$, and moreover, for any fixed $\alpha\in[0,\pi)$,
\begin{align*}
\lim\limits_{\beta\to0^+}\lambda_1(\alpha,\beta)=-\infty,\;\;\lim\limits_{\beta\to0^+}\lambda_n(\alpha,\beta)=\lambda_{n-1}(\alpha,\pi),\;\;n\geq2,
\end{align*}
and for any fixed $\beta\in(0,\pi]$,
\begin{align*}
\lim\limits_{\alpha\to\pi^-}\lambda_1(\alpha,\beta)=-\infty,\;\;\lim\limits_{\alpha\to\pi^-}\lambda_n(\alpha,\beta)=\lambda_{n-1}(0,\beta),\;\;n\geq2.
\end{align*}
Kong et al.  in \cite{Kong1999}  regarded the $n$-th eigenvalue as a function  on the space of Sturm-Liouville equations and that of  boundary conditions, respectively. They showed that the $n$-th eigenvalue is continuously dependent on the coefficients in (\ref{SL-equation}) for each $n\geq 1$. By using the above results in \cite{Everitt1997} and some inequalities among eigenvalues of 1-dimensional Sturm-Liouville problems obtained in \cite{Eastham1999}, they  found  the singular set of the $n$-th eigenvalue
in the space of complex (resp. real) boundary conditions.
They also gave all the asymptotic behavior of the $n$-th eigenvalue near each singular boundary condition.
See Theorems 3.39  and 3.76 in \cite{Kong1999}. Other discussions of the $n$-th eigenvalue can be found in \cite{Cao2007,Courant1953,Kong19962,Weidmann1987,Zettl2005}.

It is worthy to mention that the analytic, algebraic and geometric multiplicities of an eigenvalue are shown to be equal for 1-dimensional Sturm-Liouville problems \cite{Eastham1999,Kong2000,Kong2004,Wang2005} and some extensions \cite{Fu2012,Fu2014,Hu2011,Hu2016,Naimark1968,Shi2010}.
In particular, Kong et al. showed  the equivalence of analytic and geometric  multiplicities  even if $P$ changes sign in 1-dimensional case \cite{Kong2004}.  Naimark proved analytic and algebraic multiplicities of an eigenvalue  coincide for a class of high-order differential operators \cite{Naimark1968}.    When studying the  discontinuity of the $n$-th eigenvalue, it is always listed according to the analytic multiplicity.  From the perspective of application, however, people pay more attention to how many eigenvalue branches jump (tend to infinity) in the sense of  geometric multiplicity. So it is necessary to clarify the relationships of these multiplicities of an eigenvalue for high dimensional Sturm-Liouville problems.  Motivated by Naimark \cite{Naimark1968},  we shall  rigorously prove that the three multiplicities of an eigenvalue are the same  in high dimensional case even if $P$ is non-positive.

In this paper, we shall give the set of all the complex (or real) boundary conditions such that the $n$-th eigenvalue is not continuous in high dimensional case.  We call it to be  the singular set $\Sigma^\mathbb{C}$ (or $\Sigma^\mathbb{R}$) in the space of complex (or real) boundary conditions and call any element in $\Sigma^\mathbb{C}$ (or $\Sigma^\mathbb{R}$) to be a singular boundary condition. We mainly discuss the complex boundary conditions in this paper, and the real ones can be treated in a similar way. When there is truly difference in the discussion, we shall give remarks on providing a feasible way for the real ones.
 Our inspiration is from the symplectic geometry, especially the structure of Lagrangian-Grassmann manifold \cite{Arnold1967,Arnold2000}.  Indeed,
we shall prove that  in high dimensional case, the singular set, denoted by $\Sigma^\mathbb{C}$, consists of all the boundary conditions
 $\mathbf{A}=[A\;|\;B]$ such that
 \begin{align}\label{key-observation}
 n^0 (B)>0,
 \end{align}
where $n^0 (B)$ denotes the geometric  multiplicity of zero eigenvalue of $B$.

An accompanying  difficulty is how to give and  prove  asymptotic behavior of the $n$-th eigenvalue near a singular boundary condition.
The strategy based on the Pr\"{u}fer transformation does not work for  separated boundary conditions due to the coupling of the Sturm-Liouville equations. The inequalities argument used in \cite{Kong1999} also becomes invalid owing to the complexity of the boundary conditions, for example, the appearance of  mixing boundary conditions \cite{Wang2008}. Moreover, the directions, from which the boundary conditions tend to a more singular one, are diversified in  high dimensional case. All these make the problem  nontrivial.
  Our first task is also to study the topology of the space of boundary conditions. However,  instead of using separated, coupled and mixed boundary conditions, we choose the coordinate charts of the Lagrangian-Grassmann manifold introduced by Arnold \cite{Arnold1967,Arnold2000} to describe the space of boundary conditions.
  We divide $\Sigma^\mathbb{C}$ into $2d$ layers such that for any $[A\;|\;B]$  in the $k$-th layer $\Sigma_k^\mathbb{C}$,
\begin{align}\label{k-th layer criteria}
n^0(B)=k,
\end{align}
where $1\leq k\leq 2d$. Define $\Sigma_0^\mathbb{C}$ to be the complementary set  of $\Sigma^\mathbb{C}$ in the space of boundary conditions.
We then prove that the $n$-th eigenvalue  is continuously dependent on the Sturm-Liouville equations  and on the boundary conditions when restricted into $\Sigma_k^\mathbb{C}$ for each $n\geq 1$, where $0\leq k\leq 2d$.
 In the proof of  asymptotic behavior, besides  using the above results and the locally uniform property of eigenvalues (Lemma \ref{local continuity of eigenvalues}), our  technique is to construct various paths in different parts of the $k$-th layer in the space of boundary conditions. The asymptotic behavior of the $n$-th eigenvalue is proved for the following targets  via a step-by-step procedure:
\begin{itemize}
\item[{\rm 1.}] A non-coupled Sturm-Liouville equation and a separated  boundary condition in a path connected component of $\Sigma_k^\mathbb{C}$, see Lemmas  \ref{one path from 1-dim} and \ref{asymptotic behavoir for tilde A0}.
 \item[{\rm 2.}] The non-coupled Sturm-Liouville equation and any boundary condition in the path connected component of $\Sigma_k^\mathbb{C}$, see Lemma \ref{asymptotic behavior A0}.
 \item[{\rm 3.}] Any fixed Sturm-Liouville equation and any boundary condition in the path connected component of $\Sigma_k^\mathbb{C}$, see Lemma \ref{asymptotic behavior C0 any sturm liouville equation}.
  \end{itemize}
Our complete characterization of asymptotic behavior of the $n$-th eigenvalue near $\mathbf{A}\in\Sigma_k^\mathbb{C}$ (or $(\pmb\omega,\mathbf{A})$) is given in Theorems \ref{main theorem} and \ref{pmb-omega-mathbf{A}}, where $1\leq k\leq 2d$.
The essential characterization is that there exists a neighborhood $U$   of $\mathbf{A}$ in the whole space of boundary conditions such that $U=\cup_{0\leq i\leq k} U^i$,  and  for each $0\leq i\leq k$, $U^i\neq\emptyset$ and
\begin{align}\label{U-i1}
\lim_{U^i\ni\mathbf{B}\to\mathbf{A}}\lambda_{n}(\mathbf{B})&=-\infty,\;\;1\leq n\leq i,\\\label{U-i2}
\lim_{U^i\ni\mathbf{B}\to\mathbf{A}}\lambda_{n}(\mathbf{B})&=\lambda_{n-i}(\mathbf{A}),\;\;n> i.
\end{align}
In order to make the results explicitly, we  clarify what  $U^i$ is in Corollary \ref{corollary-u-i}.

In a forthcoming paper,
we shall show that   $[-\Psi_\lambda\;|\;\Phi_\lambda]$ defined in  (\ref{def-Philambda})--(\ref{def-Psilambda}) tends to $[I_{2d}\;|\;0_{2d}]$ as $\lambda\to -\infty$ for a more general Sturm-Liouville system. Based on this result and the theory of  Maslov index, we shall give a new proof of the  discontinuity of the $n$-th eigenvalue. Furthermore, we determine the range of the $n$-th eigenvalue not only on the whole space of boundary conditions but also on the $k$-th layer $\Sigma^\mathbb{C}_k$ (or $\Sigma^\mathbb{R}_k$), where $0\leq k\leq 2d$.

The rest of this paper is organized  as follows. In Section \ref{The space},  topology on  space of Sturm-Liouville equations, and that on  space  of complex boundary conditions are presented. Basic properties of eigenvalues are given in Section \ref{Basic  Sturm-Liouville theory}
and further analysis on $1$-dimensional results is provided in Section  \ref{Analysis on $1$-dimensional results}.
 In Section \ref{Equality of multiplicities of an eigenvalue}, it is proved that the analytic, algebraic and geometric multiplicities of an eigenvalue are equal.
Section \ref{Continuity of the $n$-th eigenvalue in 1-dimensional case}
 is devoted to proving that the $n$-th eigenvalue is continuous on the space of  Sturm-Liouville equaitons, and on each  layer in the space of boundary conditions. Singularity of the $n$-th eigenvalue  is completely characterized  in  Section \ref{Singularity of the $n$-th eigenvalue of  high dimensional Sturm-Liouville problems}. In the last section,   derivative formula  of  eigenvalues with respect to the boundary conditions and  comparison of eigenvalues   are obtained.

\bigskip

\section{Space of Sturm-Liouville problems}\label{The space}

In this section, we  introduce the topology on  space of Sturm-Liouville equations, and that on  space  of complex boundary conditions, respectively.

The space of  Sturm-Liouville equations is
\begin{align*}
\Omega:=\{(P,Q,W): P,Q\;{\rm and}\;W \;{\rm satisfy} \;{\bf{\rm Hypothesis\; 1}}\}
\end{align*}
 with product topology induced by $L^{\infty}([a,b],\mathbb{R}^{d\times d})$.
Following \cite{Kong1999}, we use bold faced (lower case) Greek letters, such as $\pmb{\omega}$, to stand for elements in $\Omega$.

Two linear algebraic systems
\begin{align*}
(A\; | \;B)Y(a,b) =0,\;\;(C\; | \;D)Y(a,b)=0,
\end{align*}
represent the same complex boundary condition if and only if there exists a matrix $T\in GL(2d,\mathbb{C})$ such that
 \begin{align*}
 (C\;|\;D)=(TA\;|\;TB),
 \end{align*}
where
$GL(2d,\mathbb{C}):=\{2d\times2d\; {\rm complex \;matrix}\; T:{\rm det}\; T \neq0\}$. Since each  boundary condition (\ref{boundary condition}) considered in this paper is self-adjoint, it must satisfy (\ref{self-adjoint-BC}).
So it is natural to take
the quotient space
\begin{align*}
\mathcal B^{\mathbb{C}} :=\lower3pt\hbox{${\rm
GL}(2d,\mathbb C)$}\backslash \raise2pt\hbox{$\mathcal{L}_{2d,4d}(\mathbb C)$},
\end{align*}
equipped with the quotient topology,  as the space of complex boundary conditions, where
$
\mathcal{L}_{2d,4d}(\mathbb{C})
:=\{2d\times4d \;{\rm complex\; matrix\; (A\;|\;B)}:\;{\rm rank}(A\;|\;B)=2d,\;AB^*=BA^*\}.$
The topology on $\mathcal{L}_{2d,4d}(\mathbb{C})$ here is induced by that of $\mathbb{C}^{8d^2}$.
The boundary condition represented by (\ref{boundary condition}) is denoted by $[A\,|\,B]:=\{(TA\;|\;TB):T\in GL(2d,\mathbb{C})\}$. Bold faced capital Latin letters, such as $\mathbf{A}$, are also used  for  boundary conditions.
See also \cite{Kong2000} in $1$-dimensional case.
$\mathcal B^{\mathbb{C}}$ coincides with the complex  Lagrangian-Grassmann manifold $\Lambda(2d)$ \cite{Arnold2000}.

Motivated by Arnold \cite{Arnold1967,Arnold2000}, we shall give the canonical atlas of local coordinate systems on $\mathcal{B}^\mathbb{C}$ in our framework.
 Let  $\begin{array}{llll}e_i=(0,\cdots,0,&1,&0,\cdots, 0)_{1\times2d}^T\\&i&\end{array}$ for any $1\leq i\leq 2d$, $S$ be a $2d\times2d$  matrix whose entries and columns are denoted by $s_{lj}$, $1\leq l, j\leq 2d$, and $s_j=(s_{1j},\cdots,s_{2dj})^T$,  respectively. Set $K$ be a subset of $\{1,2,\cdots, 2d\}$ with $\sharp(K)$ to be the number of elements in $K$.   Define

 \begin{align}\label{coordinate}
\mathcal{O}^{\mathbb{C}}_K=
 \big\{\mathbf{A}=[A\;|\;B]=&[(a_1,a_2,\cdots,a_{2d})\;|\;(b_1,b_2,\cdots,b_{2d})]:\big.\\\nonumber
a_{i}=&\begin{cases} -e_i & {\rm if}\;\;  i\in K, \\
                      s_i &  {\rm if}\;\; i\in\{1,2,\cdots,2d\}\backslash K, \end{cases}\\\nonumber
 b_{i}=&\begin{cases}  s_i & \;\;\;{\rm if}\;\;  i\in K, \\
                      e_i &  \;\;\;{\rm if}\;\; i\in\{1,2,\cdots,2d\}\backslash K,\end{cases}\\\nonumber
\big.&{\rm for\;any}\;2d\times2d\;{\rm Hermitian\;matrix\;} S=(s_1,s_2,\cdots, s_{2d}) \big\}
\end{align}
 for any $K\subset\{1,2,\cdots, 2d\}$. Below $S$ is written as $S(\mathbf{A})$  when it is necessary to indicate its dependence on $\mathbf{A}$.
 The following result gives the topology and geometric structure on $\mathcal{B}^\mathbb{C}$.\medskip

\begin{theorem} \label{structure of space of boundary conditions}
\begin{align*}
\mathcal{B}^{\mathbb{C}}=\bigcup_{K\subset\{1,2,\cdots,2d\}}\mathcal{O}^\mathbb{C}_K.
\end{align*}
Moreover, $\mathcal{B}^\mathbb{C}$ is a connected and compact real-analytic manifold of dimension $4d^2$.
\end{theorem}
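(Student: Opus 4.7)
The plan is to realize $\mathcal{B}^\mathbb{C}$ as the complex Lagrangian-Grassmann manifold $\Lambda(2d)$ and to exhibit the $\mathcal{O}_K^\mathbb{C}$ as Arnold's standard atlas of $2^{2d}$ coordinate charts. To a representative $(A\,|\,B)$ I associate the $2d$-dimensional row span $V\subset\mathbb{C}^{4d}=\mathbb{C}^{2d}\oplus\mathbb{C}^{2d}$; the rank condition gives $\dim V=2d$, and $AB^*=BA^*$ is precisely the Lagrangian condition with respect to the Hermitian symplectic form
\[
\omega((x_1,y_1),(x_2,y_2))=y_1^*x_2-x_1^*y_2.
\]
Left-multiplication by $T\in GL(2d,\mathbb{C})$ preserves $V$ and preserves the Lagrangian property (since $(TA)(TB)^*-(TB)(TA)^*=T(AB^*-BA^*)T^*$), so $\mathcal{B}^\mathbb{C}$ is in canonical bijection with the set of Hermitian-Lagrangian subspaces of $\mathbb{C}^{4d}$.

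First I would verify that each $\mathcal{O}_K^\mathbb{C}$ is well-defined and homeomorphic to the real vector space $\mathrm{Herm}(2d)$ of $2d\times 2d$ Hermitian matrices. Given $K$ and Hermitian $S$, the matrix $(A|B)$ prescribed in (\ref{coordinate}) has rank $2d$ automatically, because the $2d\times2d$ matrix $M_K(A,B)$ whose $i$-th column is $-a_i$ for $i\in K$ and $b_i$ for $i\notin K$ equals $I_{2d}$. A direct column-by-column computation yields $AB^*-BA^*=S-S^*$, so self-adjointness reduces to $S=S^*$. The normalization $M_K=I$ pins down the representative within its $GL(2d,\mathbb{C})$-orbit, giving injectivity. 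Hence each chart is homeomorphic to $\mathrm{Herm}(2d)\cong\mathbb{R}^{4d^2}$.

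Next I would show $\bigcup_K\mathcal{O}_K^\mathbb{C}=\mathcal{B}^\mathbb{C}$. The key observation is that $[A|B]\in\mathcal{O}_K^\mathbb{C}$ if and only if $M_K(A,B)$ is invertible, in which case left-multiplication by $M_K^{-1}$ normalizes $(A|B)$ to the chart form and the resulting matrix $S=M_K^{-1}N_K$ (with $N_K$ having $i$-th column $b_i$ for $i\in K$ and $a_i$ for $i\notin K$) is Hermitian because the identity
\[
M_KN_K^*-N_KM_K^*=-(AB^*-BA^*)=0
\]
translates exactly into $S=S^*$. The existence of some $K$ with $M_K$ invertible is the statement that every Lagrangian of $\mathbb{C}^{4d}$ is transverse to at least one of the $2^{2d}$ coordinate Lagrangians $W_K=\mathrm{span}\{(e_i,0):i\notin K\}\cup\{(0,e_j):j\in K\}$; this is Arnold's classical lemma on the Lagrangian-Grassmann manifold \cite{Arnold1967,Arnold2000}. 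Transition maps between overlapping charts are obtained by composing the normalizations $M_{K'}^{-1}$ with the entries coming from $\mathcal{O}_K^\mathbb{C}$, hence are rational and in particular real-analytic on their domains of definition.

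For the remaining topological assertions I would identify $\mathcal{B}^\mathbb{C}$ with $U(2d)$ via the Cayley-type map $[A|B]\mapsto U:=(A-iB)^{-1}(A+iB)$. Using $AB^*=BA^*$ one verifies $(A-iB)(A-iB)^*=AA^*+BB^*$, which is invertible because $(A|B)$ has full row rank, so $A-iB$ is invertible. A short calculation shows $U$ is unitary and $GL(2d,\mathbb{C})$-invariant, with inverse $U\mapsto[(I+U)/2\,|\,i(I-U)/2]$. Since $U(2d)$ is compact, connected, and real-analytic of real dimension $(2d)^2=4d^2$, the same properties transfer to $\mathcal{B}^\mathbb{C}$, and the dimension agrees with the chart count $\dim\mathrm{Herm}(2d)=4d^2$. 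The main obstacle is the covering property in the preceding paragraph: Arnold's lemma on the existence of a transverse coordinate Lagrangian is the non-trivial geometric ingredient, while everything else is bookkeeping in linear algebra.
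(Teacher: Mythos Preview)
Your proposal is correct and shares the paper's overall viewpoint of identifying $\mathcal{B}^\mathbb{C}$ with the complex Lagrangian--Grassmann manifold, but the key covering step is handled differently. You characterize $[A\,|\,B]\in\mathcal{O}_K^\mathbb{C}$ by invertibility of $M_K(A,B)$ and then invoke Arnold's lemma that every Lagrangian is transverse to some coordinate Lagrangian $W_K$; the paper instead gives a self-contained linear-algebra argument: it takes $K$ to be a set of $m_0=\mathrm{rank}\,A$ column indices at which $A$ has full column rank, row-reduces $(A\,|\,B)$ to block form $\begin{pmatrix}A_1&B_1\\0&B_2\end{pmatrix}$, exploits $A_1B_2^*=0$ to force $\mathrm{rank}\,F_2=2d-m_0$, and concludes that $(a_{k_1},\dots,a_{k_{m_0}},b_{k_{m_0+1}},\dots,b_{k_{2d}})$ is nonsingular. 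Your identity $M_KN_K^*-N_KM_K^*=-(AB^*-BA^*)$ and the Cayley-type diffeomorphism with $U(2d)$ are clean additions that the paper omits (it simply cites Arnold for compactness, connectedness, and dimension). In short: your route is more conceptual and leans on the existing Lagrangian--Grassmannian literature, while the paper's route is more elementary and explicit for the covering, at the cost of a longer computation; both reach the same conclusion.
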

\begin{remark}
Similar result is true for $\mathcal{B}^{\mathbb{R}}$ with $\mathbb{C}$ replaced by $\mathbb{R}$ except that the dimension of $\mathcal{B}^\mathbb{R}$ is $d(2d+1)$ as a real-analytic manifold.
\end{remark}

\begin{proof}
Firstly, we show that
\begin{eqnarray}
\bigcup_{K\subset\{1,2,\cdots,2d\}}\mathcal{O}^\mathbb{C}_K\subset \mathcal{B}^\mathbb{C}.\label{equ step 1}
\end{eqnarray}

For any $K\subset\{1,\cdots,2d\}$, choose $[A\;|\;B]=[(a_1,\cdots,a_{2d})\;|\;(b_1,\cdots,b_{2d})]\subset\mathcal{O}^\mathbb{C}_K$, where $$a_j=(a_{1j},\cdots,a_{2dj})^T,b_j=(b_{1j},\cdots,b_{2dj})^T, \ j=1,\cdots,2d.$$ Define
\begin{eqnarray*}
\delta_{i,K}:=\left\{
\begin{aligned}
&1,\ i\in K,\\ &0,\ i\notin K.
\end{aligned}\right.
\end{eqnarray*}
For any $1\leq i,k\leq 2d$, by (\ref{coordinate}) we have
\begin{align}\label{AKBKBKAK1}
(AB^*)_{ik}&=\sum^{2d}_{j=1}a_{ij}\bar{b}_{kj}\\\nonumber
&=\sum_{j\in K}a_{ij}\bar{b}_{kj}+\sum_{j\notin K}a_{ij}\bar{b}_{kj}\\\nonumber
&=\sum_{j\in K}-\delta_{ij}\bar{s}_{kj}+\sum_{j\notin K}s_{ij}\delta_{kj}\\\nonumber
&=-\delta_{i,K}\bar{s}_{ki}+(1-\delta_{k,K})s_{ik},
\end{align}
\begin{align}\label{AKBKBKAK2}
(BA^*)_{ik}&=\sum^{2d}_{j=1}b_{ij}\bar{a}_{kj}\\\nonumber
&=\sum_{j\in K}b_{ij}\bar{a}_{kj}+\sum_{j\notin K}b_{ij}\bar{a}_{kj}\\\nonumber
&=\sum_{j\in K}-s_{ij}\delta_{kj}+\sum_{j\notin K}\delta_{ij}\bar{s}_{kj}\\\nonumber
&=-\delta_{k,K}s_{ik}+(1-\delta_{i,K})\bar{s}_{ki}.
\end{align}
Since $S=S^*$, we have $AB^*=BA^*$. Then (\ref{equ step 1}) follows.

Conversely, let
$$[A\;|\;B]=[(a_1,\cdots,a_{2d})\;|\;(b_1,\cdots,b_{2d})]\in \mathcal{B}^{\mathbb{C}}$$ and $m_0:=\mathrm{rank}{A}$.
Suppose  $\mathrm{rank}$ $(a_{k_1},\cdots,a_{k_{m_0}})=m_0$. Denote $K:=\{k_1,\cdots,k_{m_0}\}$ and $\{1,2,\cdots,2d\}\setminus K:=\{k_{m_0+1},\cdots,k_{2d}\}.$

\textbf{Claim}: $(a_{k_1},\cdots,a_{k_{m_0}},b_{k_{{m_0}+1}},\cdots,b_{k_{2d}})$ is non-degenerate.

Firstly, we can choose $T\in GL(2d,\mathbb{C})$  such that
$$T(A\;|\;B)=\begin{pmatrix}A_1&B_1\\0&B_2\end{pmatrix}\in\mathcal{L}_{2d,4d}(\mathbb{C}),\ A_1,B_1\in \mathcal{M}_{m_0\times2d},\  B_2\in\mathcal{M}_{(2d-m_{0})\times 2d},$$
where $\mathcal{M}_{m_0\times2d}$ denotes the set of all $m_0\times 2d$ complex matrices.
Let
 $$A_1:=(a_{1,1},\cdots,a_{1,2d}),\;B_2:=(b_{2,1},\cdots,b_{2,2d}), $$
where $a_{1,l}=(a_{1,1l},\cdots,a_{1,m_0l})^T$
and $b_{2,l}=(b_{2,1l},\cdots,b_{2,(2d-m_0)l})^T,$ $1\leq l\leq 2d$. Since $[A\;|\;B]\in\mathcal{B}^\mathbb{C}$,
direct computation shows that
\begin{eqnarray*}
\begin{pmatrix}A_1\\0\end{pmatrix}\begin{pmatrix}B^*_1&B^*_2\end{pmatrix}=\begin{pmatrix}B_1\\B_2\end{pmatrix}\begin{pmatrix}A^*_1&0\end{pmatrix}\\
\Leftrightarrow\ A_1B_1^*\ {\rm symmetric}\ {\rm and}\ A_1B_2^*=0.
\end{eqnarray*}
 Let
 \begin{align*}
 E&:=(E_1\;|\;E_2)=(a_{1,k_1},\cdots,a_{1,k_{m_0}}\;|\; a_{1,k_{m_0+1}},\cdots,a_{1,k_{2d}}),\\
 F&:=(F_1\;|\;F_2)=( b_{2,k_1},\cdots,b_{2,k_{m_0}}\;|\;b_{2,k_{m_0+1}},\cdots,b_{2,k_{2d}}).
 \end{align*}
 Note that ${\rm rank} E_1=\mathrm{rank}E=\mathrm{rank}A_1=m_0$ and $\mathrm{rank}F=\mathrm{rank}B_2=2d-m_0$.
Since $A_1B^*_2=0$, we have $EF^*=E_1F^*_1+E_2F^*_2=0$.
Direct calculation gives
$$(F_2-F_1E_1^{-1}E_2)F_2^*=F_2F^*_2-F_1E_1^{-1}E_2F^*_2=F_2F^*_2+F_1E_1^{-1}E_1F^*_1=FF^*,$$
which yields
$$2d-m_0\geq\mathrm{rank}F^*_2\geq\mathrm{rank}FF^*=\mathrm{rank}F=2d-m_0.$$
 Then $\mathrm{rank}F_2=2d-m_0.$
 Thus
$$(a_{k_1}\cdots,a_{k_{m_0}},b_{k_{m_0+1}},\cdots,b_{2d})=T^{-1}\begin{pmatrix}E_1&*\\0& F_2\end{pmatrix}$$
 is non-degenerate and this claim holds.

Let
$$T_1:=(-a_{k_1},\cdots,-a_{k_{m_0}},b_{k_{m_0+1}},\cdots,b_{k_{2d}}).$$
Then $$(T^{-1}_1A\;|\;T_1^{-1}B)=(a'_{1},\cdots,a'_{2d},b'_{1},\cdots,b'_{2d}),$$
where $a'_{k_i}=-e_i,\ i=1,\cdots,m_0,\ b'_{k_j}=e_j,\ j=m_0+1,\cdots,2d$.
Let $S=(s_1,\cdots,s_{2d})=(s_{ik})\in\mathcal{M}_{2d\times 2d}$ and
\begin{eqnarray*}
s_l=\left\{
\begin{aligned}
a'_{l},&\;\; l\notin K,\\ b'_{l},&\;\; l\in K.
\end{aligned}\right.
\end{eqnarray*}
  Since $AB^*=BA^*$, by the calculation in (\ref{AKBKBKAK1})--(\ref{AKBKBKAK2}), we have $s_{ik}=\bar{s}_{ki}$. Thus  $[A\;|\;B]\subset\mathcal{O}_K^\mathbb{C}$. Other assertions are direct consequences of properties of Lagrangian-Grassmann manifold \cite{Arnold1967,Arnold2000}. This completes the proof.
\end{proof}

The product space $\Omega\times\mathcal{B}^{\mathbb{C}}$ is the space of   Sturm-Liouville problems, and $(\pmb{\omega},\mathbf{A})$  is used to stand for an element in $\Omega\times\mathcal{B}^{\mathbb{C}}$.

\section{Basic properties of eigenvalues}\label{Basic  Sturm-Liouville theory}

Firstly, we introduce the following weighted space:
\begin{align*}
L^2_W([a,b],\mathbb{C}^d)=\{y:\int_a^by(t)^*W(t)y(t)dt<+\infty\},
\end{align*}
with the inner product $\langle y,z\rangle_W=\int_a^bz(t)^*W(t)y(t)dt$.

Let $(\pmb\omega,\mathbf{A})\in\Omega\times\mathcal{B}^\mathbb{C}$ with $\pmb\omega=(P,Q,W)$ and $\mathbf{A}=[A\;|\;B]$. Then the corresponding  Sturm-Liouville operator
\begin{align}\label{Sturm-Liouville operator}
T_{(\pmb\omega,\mathbf{A})}y=W^{-1}(-(Py')'+Qy)
\end{align}
is self-adjoint with the domain
\begin{align}\label{domain}
D_{\mathbf{A}}=\{y\in L^2_W([a,b],\mathbb{C}^d):&y, Py'\in AC([a,b],\mathbb{C}^d),\\\nonumber
& T_{(\pmb\omega,\mathbf{A})}y\in L^2_W([a,b],\mathbb{C}^d), y\; {\rm satisfies}\;  (\ref{boundary condition})\}.
\end{align}

For any $\lambda\in\mathbb{C}$, let
 $\phi_{1,\lambda},\cdots,\phi_{2d,\lambda}$
 be the fundamental  solutions to (\ref{SL-equation}) determined by the initial conditions
\begin{align*}
\begin{pmatrix}\phi_{1,\lambda}(a)&\cdots&\phi_{2d,\lambda}(a)\\
P\phi_{1,\lambda}'(a)&\cdots&P\phi_{2d,\lambda}'(a)\end{pmatrix}=I_{2d}.
\end{align*}
Denote
\begin{align}\label{def-Philambda}
\Phi_\lambda:=
\begin{pmatrix}-\phi_{1,\lambda}(a)&\cdots&-\phi_{2d,\lambda}(a)\\
\phi_{1,\lambda}(b)&\cdots&\phi_{2d,\lambda}(b)\end{pmatrix},\\\label{def-Psilambda}
\Psi_\lambda:=
\begin{pmatrix}P\phi'_{1,\lambda}(a)&\cdots&P\phi'_{2d,\lambda}(a)\\
P\phi'_{1,\lambda}(b)&\cdots&P\phi'_{2d,\lambda}(b)\end{pmatrix}.
\end{align}
Then   $\Phi_\lambda$ and $\Psi_\lambda$  are entire $2d\times2d$-matrix valued functions of $\lambda$.

\begin{lemma}\label{fundamental lemma}
The spectrum of  the $d$-dimensional Sturm-Liouville problem $(\pmb\omega,\mathbf{A})$ consists of isolated eigenvalues, which are all real and bounded from below. Moreover, $\lambda\in\mathbb{R}$ is an eigenvalue of $(\pmb\omega,\mathbf{A})$ if and only of $\lambda$ is a zero of
\begin{align}\label{Gamma-lambda}
\Gamma_{(\pmb\omega,\mathbf{A})}(\lambda):=\det(A\Phi_\lambda+B\Psi_\lambda).
\end{align}
\end{lemma}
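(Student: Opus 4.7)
My plan is to first dispatch the characterization via $\Gamma_{(\pmb\omega,\mathbf{A})}$, since it will be the engine for proving discreteness. Any solution $y$ of \eqref{SL-equation} is a linear combination $y=\sum_{i=1}^{2d}c_i\phi_{i,\lambda}$, and by the initial-condition normalization defining the $\phi_{i,\lambda}$ one has
\begin{align*}
Y(a,b)=\begin{pmatrix}\Phi_\lambda\\ \Psi_\lambda\end{pmatrix}c,\qquad c=(c_1,\dots,c_{2d})^T.
\end{align*}
Therefore the boundary condition $(A\,|\,B)Y(a,b)=0$ becomes $(A\Phi_\lambda+B\Psi_\lambda)c=0$, which admits a nonzero $c$ exactly when $\Gamma_{(\pmb\omega,\mathbf{A})}(\lambda)=0$. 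Since $y\not\equiv 0$ is equivalent to $c\neq 0$ (again by the normalization of the fundamental solutions), this is the desired equivalence for any $\lambda\in\mathbb{C}$. Real-ness of eigenvalues then follows immediately from the self-adjointness of $T_{(\pmb\omega,\mathbf{A})}$ on $D_\mathbf{A}$ recorded in \eqref{Sturm-Liouville operator}--\eqref{domain}.

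Next I would deduce that the spectrum is a discrete (isolated) set. Because $\Phi_\lambda$ and $\Psi_\lambda$ are entire $2d\times 2d$-matrix-valued functions of $\lambda$, the determinant $\Gamma_{(\pmb\omega,\mathbf{A})}(\lambda)=\det(A\Phi_\lambda+B\Psi_\lambda)$ is an entire scalar function of $\lambda$. It suffices to show $\Gamma_{(\pmb\omega,\mathbf{A})}\not\equiv 0$, for the zeros of a nonzero entire function are isolated, and by the equivalence above these zeros are exactly the eigenvalues. Non-vanishing is immediate: since $T_{(\pmb\omega,\mathbf{A})}$ is self-adjoint, any $\lambda\in\mathbb{C}\setminus\mathbb{R}$ cannot be an eigenvalue, so $\Gamma_{(\pmb\omega,\mathbf{A})}(\lambda)\neq 0$ on the (nonempty) set $\mathbb{C}\setminus\mathbb{R}$.

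For the lower bound, I would run a standard quadratic-form/energy argument. For $y\in D_\mathbf{A}$, integration by parts gives
\begin{align*}
\langle T_{(\pmb\omega,\mathbf{A})}y,y\rangle_W=\int_a^b\bigl((Py')^*y'+y^*Qy\bigr)\,dt+\bigl[y^*(Py')\bigr]_a^b,
\end{align*}
and Hypothesis 1 together with $P\ge \mu_1 I_d$, $W\le \|W\|_\infty I_d$, and $Q\ge -\|Q\|_\infty I_d$ yield the bulk bound $\int_a^b((Py')^*y'+y^*Qy)\,dt\ge -\|Q\|_\infty\,\mu_2^{-1}\langle y,y\rangle_W$. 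The boundary term $[y^*Py']_a^b$ is controlled via the self-adjointness of the boundary condition \eqref{self-adjoint-BC} by a constant multiple of $\sup_{[a,b]}|y|^2$, which in turn is absorbed by $\int_a^b|y'|^2\,dt$ and $\langle y,y\rangle_W$ through a Sobolev/trace inequality on $[a,b]$; standard arithmetic-geometric rearrangement then produces a constant $C$ with $\langle T_{(\pmb\omega,\mathbf{A})}y,y\rangle_W\ge -C\langle y,y\rangle_W$. Combined with self-adjointness this proves $\sigma(T_{(\pmb\omega,\mathbf{A})})\subset[-C,\infty)$.

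The main obstacle I anticipate is the boundary-term estimate in the lower-bound step: unlike the scalar case or the case of separated boundary conditions, the general self-adjoint matrix boundary condition couples $y(a)$, $y(b)$, $(Py')(a)$, $(Py')(b)$, so one must carefully exploit $AB^*=BA^*$ and $\mathrm{rank}(A|B)=2d$ to guarantee that $[y^*Py']_a^b$ is real and bounded below. Everything else (the determinantal characterization, the entire-function argument for discreteness, and reality from self-adjointness) is essentially algebraic and should go through cleanly.
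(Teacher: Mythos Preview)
Your argument is correct and follows the standard route; the paper itself does not give a proof but simply cites Lemma~4.5 of \cite{Zettl1997}, so there is no substantive comparison to make. One remark: the boundary-term estimate you flag as the main obstacle is carried out in full detail later in the paper, in the proof of Theorem~\ref{omegatimesboundary condition continuity} (equations \eqref{sequiform}--\eqref{varepsilon chosen1}), where the self-adjointness condition $AB^*=BA^*$ together with $\mathrm{rank}(A\,|\,B)=2d$ is used via a $QR$-type factorization to bound the boundary contribution by $c\|y\|_{C^0}^2$ and then interpolate; you can simply invoke that computation. Also note a harmless sign slip: the integration-by-parts boundary term is $-[y^*(Py')]_a^b$, not $+[y^*(Py')]_a^b$, though this does not affect your bound.
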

\begin{proof} The proof is similar to that of Lemma 4.5 in \cite{Zettl1997}.
\end{proof}

\begin{definition}
Let $\lambda$ be an eigenvalue of $(\pmb\omega,\mathbf{A})$.
The order of $\lambda$ as a zero of $\Gamma_{(\pmb\omega,\mathbf{A})}$ is called its analytic multiplicity. The number of linearly independent eigenfunctions for $\lambda$ is called its geometric multiplicity. The dimension of the space $E_{\lambda}=\{y\in L_W^2([a,b],\mathbb{C}^d):(T_{(\pmb\omega,\mathbf{A})}-\lambda)^ky=0\;{\it for\;some\;integer\;}k\geq 1\}$ is called its algebraic multiplicity.
\end{definition}

We show in Theorem \ref{ equivalence of three multiplicities of an eigenvalue} that   the three multiplicities of  an eigenvalue are equal.
Thus  we shall not distinguish them.
The following result is  locally continuous dependence of eigenvalues on  Sturm-Liouville problems, which can be proved by Rouch\'{e}'s Theorem \cite{Conway1986}. See  \cite{Kong1999,Zhu2017} in $1$-dimensional case.

\begin{lemma} \label{local continuity of eigenvalues}
Let $r_1<r_2$ be two real numbers such that neither of them is an eigenvalue of a given Sturm-Liouville problem $(\pmb\omega,\mathbf{A}),$ and $n\geq 0$ be the number of eigenvalues of  $(\pmb\omega,\mathbf{A})$ in the interval $(r_1,r_2)$. Then there exists a neighborhood $U$ of $(\pmb\omega,\mathbf{A})$ in $\Omega\times\mathcal{B}^{\mathbb{C}}$ such that each $(\pmb\sigma,\mathbf{B})\in U$ has exactly $n$ eigenvalues in $(r_1,r_2)$, and  neither $r_1$ nor $r_2$ is an eigenvalue of $(\pmb\sigma,\mathbf{B})$.
\end{lemma}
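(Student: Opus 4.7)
The plan is to use Lemma \ref{fundamental lemma} to recast eigenvalue counting as zero counting for the entire function $\Gamma_{(\pmb\omega,\mathbf{A})}(\lambda) = \det(A\Phi_\lambda + B\Psi_\lambda)$, and then apply Rouch\'e's theorem on a suitable contour in $\mathbb{C}$. The crucial input is joint continuity of $\Gamma_{(\pmb\omega,\mathbf{A})}(\lambda)$ in all three arguments $(\pmb\omega, \mathbf{A}, \lambda)$, uniformly for $\lambda$ in compact subsets of $\mathbb{C}$.

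First I would establish that the fundamental solutions $\Phi_\lambda$ and $\Psi_\lambda$ depend jointly continuously on $(\pmb\omega, \lambda) \in \Omega \times \mathbb{C}$, uniformly on compact subsets of $\mathbb{C}$. Rewriting (\ref{SL-equation}) as a first-order linear system for $(y, Py')^T$ whose coefficient matrix depends linearly on $\lambda$ and continuously on $(P,Q,W)$ in the $L^\infty$ topology (one uses $P(t) \geq \mu_1 I_d$ from Hypothesis 1 to invert $P$ with preserved continuity), a standard Gronwall estimate for the integral form of the initial value problem delivers the required uniform continuity. To handle the dependence on $\mathbf{A}$, I would restrict to a single chart $\mathcal{O}^\mathbb{C}_K$ from Theorem \ref{structure of space of boundary conditions} containing $\mathbf{A}$; in such a chart the canonical representative $(A, B)$ given by (\ref{coordinate}) is a continuous function of the Hermitian parameter $S(\mathbf{A})$, so $\det(A\Phi_\lambda + B\Psi_\lambda)$ becomes a jointly continuous function on $\Omega \times \mathcal{O}^\mathbb{C}_K \times \mathbb{C}$.

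Next, since the zeros of $\Gamma_{(\pmb\omega,\mathbf{A})}$ are all real and isolated by Lemma \ref{fundamental lemma}, and $r_1, r_2$ are not zeros by hypothesis, I would pick a rectangular contour $C \subset \mathbb{C}$ with vertical sides at $\mathrm{Re}\,z = r_1$ and $\mathrm{Re}\,z = r_2$ and horizontal sides at $\mathrm{Im}\,z = \pm \delta$ for $\delta > 0$ small enough that $C$ encloses exactly the $n$ zeros (with analytic multiplicity) of $\Gamma_{(\pmb\omega,\mathbf{A})}$ inside $(r_1, r_2)$ and no others. Setting $m := \min_{\lambda \in C} |\Gamma_{(\pmb\omega,\mathbf{A})}(\lambda)| > 0$ and invoking the uniform continuity established above, I obtain a neighborhood $U$ of $(\pmb\omega, \mathbf{A})$ in $\Omega \times \mathcal{O}^\mathbb{C}_K$ on which $|\Gamma_{(\pmb\sigma,\mathbf{B})}(\lambda) - \Gamma_{(\pmb\omega,\mathbf{A})}(\lambda)| < m$ for every $\lambda \in C$. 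Rouch\'e's theorem then forces $\Gamma_{(\pmb\sigma,\mathbf{B})}$ to have exactly $n$ zeros inside $C$ counted with multiplicity; these zeros are real by Lemma \ref{fundamental lemma} and lie strictly inside $(r_1, r_2)$ because the strict Rouch\'e inequality on the vertical sides of $C$ rules out zeros at $r_1$ or $r_2$.

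The main obstacle I anticipate is organizing the continuity of $\Gamma$ in $\mathbf{A}$ through the quotient topology on $\mathcal{B}^\mathbb{C}$, since $\Gamma_{(\pmb\omega,\mathbf{A})}$ is only defined up to multiplication by the nonzero scalar $\det T$ with $T \in GL(2d,\mathbb{C})$. Restricting to a single chart $\mathcal{O}^\mathbb{C}_K$ containing $\mathbf{A}$ fixes a distinguished representative $(A,B)$ and hence a distinguished $\Gamma$; the resulting zero set and multiplicities are intrinsic to $\mathbf{A}$, so this local choice is harmless and reduces the problem to the straightforward Rouch\'e argument above.
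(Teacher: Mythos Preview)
Your proposal is correct and follows essentially the same approach as the paper: both arguments apply Rouch\'e's theorem to $\Gamma_{(\pmb\omega,\mathbf{A})}$ on a contour in $\mathbb{C}$ enclosing the eigenvalues in $(r_1,r_2)$, using joint continuity in $(\pmb\omega,\mathbf{A},\lambda)$ to transfer the zero count to nearby problems. The paper uses the disk with diameter $[r_1,r_2]$ rather than a rectangle and is terser about the continuity step, but you supply exactly the details (Gronwall for the fundamental solutions, the chart restriction to handle the quotient topology) that the paper leaves implicit.
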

\begin{proof}
Let  $R:=\{z\in\mathbb{C}:|z-(r_1+r_2)/2|<(r_2-r_1)/2\}$ and
$\eta:=\min\limits_{\lambda\in \partial R}|\Gamma_{(\pmb\omega,\mathbf{A})}(\lambda)|,$
where $\partial R$ denotes the boundary of $R$.
Then $\eta>0$ by Lemma \ref{fundamental lemma}.  By the compactness of $\partial R$ and the uniform continuity of $\Gamma_{(\pmb\omega,\mathbf{A})}$ on $(\pmb\omega,\mathbf{A})$ and $\lambda$, there exists a neighborhood $U$ of $(\pmb\omega,\mathbf{A})$  in $\Omega\times\mathcal{B}^\mathbb{C}$ such that $|\Gamma_{(\pmb\sigma,\mathbf{B})}(\lambda)-\Gamma_{(\pmb\omega,\mathbf{A})}(\lambda)|<\eta$ for all $\lambda\in\partial R$ and for all $(\pmb\sigma,\mathbf{B})\in U$,
  which also implies that $|\Gamma_{(\pmb\sigma,\mathbf{B})}(\lambda)|\geq|\Gamma_{(\pmb\omega,\mathbf{A})}(\lambda)|
-|\Gamma_{(\pmb\sigma,\mathbf{B})}(\lambda)-\Gamma_{(\pmb\omega,\mathbf{A})}(\lambda)|>0$.
Thus neither $r_1$ nor $r_2$ is an eigenvalue of $(\pmb\sigma,\mathbf{B})\in U$. Since $\Gamma_{(\pmb\sigma,\mathbf{B})}-\Gamma_{(\pmb\omega,\mathbf{A})}$ and $\Gamma_{(\pmb\omega,\mathbf{A})}$ are both entire functions of $\lambda$, $\Gamma_{(\pmb\sigma,\mathbf{B})}$ and $\Gamma_{(\pmb\omega,\mathbf{A})}$  have the same
number of zeros in $R$, counting order, by Rouche's Theorem.   The proof is complete by the fact of the reality of eigenvalues.
\end{proof}

The next result is a direct consequence of Lemma \ref{local continuity of eigenvalues}.

\begin{lemma}\label{continuous eigenvalue branch}
Let $\lambda_*$ be an eigenvalue  with multiplicity $m$ of $(\pmb\omega,\mathbf{A})$, and $r_1<r_2$ be two real numbers  such that $\lambda_*\in(r_1,r_2)$ is the only eigenvalue  of  $(\pmb\omega,\mathbf{A})$ in the interval
$[r_1, r_2]$. Then there exist a  connected neighborhood $U$ of $(\pmb\omega,\mathbf{A})$ in $\Omega\times\mathcal{B}^{\mathbb{C}}$ and continuous functions $\Lambda_1,\cdots,\Lambda_m$ defined
on $U$ such that $r_1<\Lambda_1(\pmb\sigma,\mathbf B) \leq\cdots\leq
\Lambda_m(\pmb\sigma,\mathbf B) <r_2$ for each
$(\pmb\sigma,\mathbf B) \in U$, where
$\Lambda_1(\pmb\sigma,\mathbf B),\cdots,\Lambda_m(\pmb\sigma,\mathbf B)$ are eigenvalues of
$(\pmb\sigma,\mathbf B)$.
\end{lemma}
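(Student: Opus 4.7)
The plan is to derive this from Lemma \ref{local continuity of eigenvalues} by a two-step argument: first produce a connected neighborhood $U$ on which the number of eigenvalues in $(r_1,r_2)$, counted with multiplicity, is constantly $m$; then, at every point of $U$, apply Lemma \ref{local continuity of eigenvalues} a second time to sufficiently narrow subintervals in order to control the locations of the individual ordered eigenvalues.

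For the first step, I would apply Lemma \ref{local continuity of eigenvalues} with the given $r_1<r_2$ to obtain an open neighborhood $U_0$ of $(\pmb\omega,\mathbf{A})$ in $\Omega\times\mathcal{B}^{\mathbb{C}}$ such that every $(\pmb\sigma,\mathbf{B})\in U_0$ has exactly $m$ eigenvalues in $(r_1,r_2)$, counting order, and such that neither $r_1$ nor $r_2$ is an eigenvalue. Since $\Omega$ is locally path-connected as a subset of a normed space and $\mathcal{B}^{\mathbb{C}}$ is a real-analytic manifold by Theorem \ref{structure of space of boundary conditions}, the product is locally path-connected, so I can shrink $U_0$ to a path-connected open neighborhood $U$ of $(\pmb\omega,\mathbf{A})$. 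I then define $\Lambda_1(\pmb\sigma,\mathbf{B})\leq\cdots\leq\Lambda_m(\pmb\sigma,\mathbf{B})$ to be the $m$ eigenvalues of $(\pmb\sigma,\mathbf{B})$ lying in $(r_1,r_2)$, listed in non-decreasing order with their analytic multiplicities.

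For the second step, fix any $(\pmb\sigma_0,\mathbf{B}_0)\in U$ and let $\mu_1<\cdots<\mu_k$ be the distinct values among $\Lambda_1(\pmb\sigma_0,\mathbf{B}_0),\ldots,\Lambda_m(\pmb\sigma_0,\mathbf{B}_0)$, with multiplicities $n_1,\ldots,n_k$, so $n_1+\cdots+n_k=m$. Given $\varepsilon>0$, I would pick pairwise disjoint closed subintervals $[r_{1,j},r_{2,j}]\subset(r_1,r_2)$ with $r_{2,j}-r_{1,j}<2\varepsilon$ and $\mu_j\in(r_{1,j},r_{2,j})$ such that no endpoint is an eigenvalue of $(\pmb\sigma_0,\mathbf{B}_0)$. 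Applying Lemma \ref{local continuity of eigenvalues} to each pair $(r_{1,j},r_{2,j})$ and intersecting the resulting neighborhoods yields an open neighborhood $V\subset U$ of $(\pmb\sigma_0,\mathbf{B}_0)$ on which every $(\pmb\sigma,\mathbf{B})$ has exactly $n_j$ eigenvalues in $(r_{1,j},r_{2,j})$. Because $\sum_j n_j=m$ and the subintervals are disjoint, these account for all of the $\Lambda_i(\pmb\sigma,\mathbf{B})$, and the non-decreasing ordering forces the $n_j$ of them corresponding to $\mu_j$ to lie precisely in $(r_{1,j},r_{2,j})$. Hence $|\Lambda_i(\pmb\sigma,\mathbf{B})-\Lambda_i(\pmb\sigma_0,\mathbf{B}_0)|<2\varepsilon$ for all $i$ on $V$, proving continuity.

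The only subtle point is the ordering: Lemma \ref{local continuity of eigenvalues} counts eigenvalues within a given interval but says nothing about labelling the ordered list $\Lambda_1\leq\cdots\leq\Lambda_m$. Separating distinct values $\mu_j$ into pairwise disjoint subintervals, as above, is exactly what is needed to upgrade the count-preserving statement of Lemma \ref{local continuity of eigenvalues} into continuity of each ordered branch, even across crossings of distinct branches. I do not expect any genuine obstacle beyond this bookkeeping.
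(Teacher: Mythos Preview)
Your proposal is correct and is precisely the standard unpacking of what the paper means by calling this lemma ``a direct consequence of Lemma~\ref{local continuity of eigenvalues}''; the paper gives no further details. The two-step argument---count preservation on a connected neighborhood, followed by localizing around each distinct eigenvalue to control the ordered branches---is exactly the intended route.
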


These functions in Lemma \ref{continuous eigenvalue branch} are locally called continuous eigenvalue branches. When $m=1$, $\Lambda_1$ is called the continuous simple eigenvalue branch.
Then we shall make a continuous choice of eigenfunctions for the eigenvalues along a  continuous simple eigenvalue branch.

\begin{lemma}\label{continuous choice of eigenfunctions}
  Let $\lambda_*$ be a simple eigenvalue  of $(\pmb\omega,\mathbf{A})$,
$u_0$ be a given eigenfunction for $\lambda_*$, and
$\Lambda$ be a continuous simple eigenvalue branch defined on a  neighborhood ${U}$ of $(\pmb\omega,\mathbf{A})$
in $\Omega\times\mathcal{B}^{\mathbb{C}}$ through $\lambda_*$.  Then
 there exists a neighborhood ${U}_1\subset{U} $ of $(\pmb\omega,\mathbf{A})$ such that   for any $(\pmb\sigma,\mathbf{B})\in{U}_1$,
  there is an eigenfunction $u_{\Lambda(\pmb\sigma,\mathbf{B})}$ for $\Lambda(\pmb\sigma,\mathbf{B})$  satisfying that
 $u_{\Lambda(\pmb\omega,\mathbf{A})}=u_0$, and  $u_{\Lambda(\cdot)}$ and $pu'_{\Lambda(\cdot)}$ are continuous on ${U}_1$ in the sense that
for any $(\pmb\sigma,\mathbf{B})\in{U}_1$,
$u_{\Lambda(\pmb\tau,\mathbf{C})}\to u_{\Lambda(\pmb\sigma,\mathbf{B})}$ and  $pu'_{\Lambda(\pmb\tau,\mathbf{C})}\to pu'_{\Lambda(\pmb\sigma,\mathbf{B})}$
as ${U}_1\ni(\pmb\tau,\mathbf{C})\to(\pmb\sigma,\mathbf{B})$ both uniformly on  $[a,b]$.
\end{lemma}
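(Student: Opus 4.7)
The plan is to represent the desired eigenfunctions as linear combinations of the fundamental solutions $\phi_{1,\lambda},\ldots,\phi_{2d,\lambda}$ and make the coefficients depend continuously on $(\pmb\sigma,\mathbf{B})$ via an adjugate-matrix construction. First I would fix a coordinate chart $\mathcal{O}^{\mathbb{C}}_K$ from Theorem~\ref{structure of space of boundary conditions} containing $\mathbf{A}$; inside such a chart every nearby $\mathbf{B}$ has canonical representatives $(A(\mathbf{B})\,|\,B(\mathbf{B}))$ depending continuously (indeed real-analytically) on $\mathbf{B}$ via the Hermitian parameter $S(\mathbf{B})$. An eigenfunction of $(\pmb\sigma,\mathbf{B})$ with eigenvalue $\mu$ can then be written $u(t)=\sum_{j=1}^{2d} c_j\,\phi_{j,\mu}(t;\pmb\sigma)$, where the coefficient vector $c=(c_1,\ldots,c_{2d})^T$ must lie in the kernel of
\[
M(\pmb\sigma,\mathbf{B},\mu) := A(\mathbf{B})\Phi_\mu(\pmb\sigma)+B(\mathbf{B})\Psi_\mu(\pmb\sigma).
\]

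Since $\lambda_*=\Lambda(\pmb\omega,\mathbf{A})$ is simple, $M_*:=M(\pmb\omega,\mathbf{A},\lambda_*)$ has rank $2d-1$ with one-dimensional kernel, so $\mathrm{adj}(M_*)$ has rank one and every nonzero column spans $\ker M_*$. Pick an index $i_0$ such that the $i_0$-th column $c_*$ of $\mathrm{adj}(M_*)$ is nonzero. The given eigenfunction $u_0$ then equals $\gamma\sum_j c_{*,j}\,\phi_{j,\lambda_*}(\cdot;\pmb\omega)$ for a unique nonzero scalar $\gamma$. I would now define, on a suitable neighborhood $U_1$ of $(\pmb\omega,\mathbf{A})$,
\[
c(\pmb\sigma,\mathbf{B}) := \text{$i_0$-th column of } \mathrm{adj}\bigl(M(\pmb\sigma,\mathbf{B},\Lambda(\pmb\sigma,\mathbf{B}))\bigr),
\qquad
u_{\Lambda(\pmb\sigma,\mathbf{B})}(t) := \gamma\sum_{j=1}^{2d} c_j(\pmb\sigma,\mathbf{B})\,\phi_{j,\Lambda(\pmb\sigma,\mathbf{B})}(t;\pmb\sigma).
\]
Because entries of $\mathrm{adj}(M)$ are polynomials in the entries of $M$, the vector $c(\pmb\sigma,\mathbf{B})$ is continuous; shrinking $U_1$ keeps it nonzero and therefore in the (still one-dimensional) kernel of $M(\pmb\sigma,\mathbf{B},\Lambda(\pmb\sigma,\mathbf{B}))$ by the rank-nullity reasoning above. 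By construction $u_{\Lambda(\pmb\omega,\mathbf{A})}=u_0$.

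The remaining step is to verify continuity of $u_{\Lambda(\cdot)}$ and $Pu'_{\Lambda(\cdot)}$ uniformly on $[a,b]$. By Lemma~\ref{continuous eigenvalue branch}, $\Lambda$ is continuous; by the display defining $u_{\Lambda(\pmb\sigma,\mathbf{B})}$ and continuity of $c(\pmb\sigma,\mathbf{B})$, everything reduces to the assertion that $\phi_{j,\mu}(\cdot;\pmb\sigma)$ and $P\phi_{j,\mu}'(\cdot;\pmb\sigma)$ depend continuously on $(\pmb\sigma,\mu)\in\Omega\times\mathbb{C}$ in the $C([a,b])$-topology, which in turn gives the joint continuity of $\Phi_\mu$, $\Psi_\mu$, and hence of $M$, that was used above. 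This is the main technical step and I expect it to be the principal obstacle: I would rewrite $(\ref{SL-equation})$ as the first-order Hamiltonian system for $(y,Py')^T$, use the uniform lower bound $P(t)\ge\mu_1$ from Hypothesis~1 to invert $P$ with a uniform $L^\infty$ bound, and apply Gronwall's inequality to the difference of solutions with perturbed coefficients and perturbed $\mu$. The Gronwall estimate yields uniform convergence of $(y,Py')$ on $[a,b]$ whenever $(P,Q,W,\mu)$ converge in $L^\infty\times L^\infty\times L^\infty\times\mathbb{C}$, which delivers the required continuity and completes the proof.
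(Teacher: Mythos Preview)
Your argument is correct. The paper itself does not give a proof but simply refers to Theorem~3.1 in \cite{Kong1996}, whose method is the same in spirit: write candidate eigenfunctions as linear combinations of fundamental solutions, use continuous dependence of the fundamental solutions on $(\pmb\sigma,\mu)$ (via the first-order system and Gronwall), and select the coefficient vector in $\ker M(\pmb\sigma,\mathbf{B},\Lambda(\pmb\sigma,\mathbf{B}))$ continuously. Your adjugate-column construction is a clean, explicit way to carry out that last selection step; the referenced proof in \cite{Kong1996} achieves the same end by a slightly different bookkeeping, but the underlying mechanism is identical. One small point worth stating explicitly in your write-up: the column of $\mathrm{adj}(M)$ lies in $\ker M$ for \emph{every} $(\pmb\sigma,\mathbf{B})\in U_1$ simply because $M\cdot\mathrm{adj}(M)=\det(M)I=0$ whenever $\Lambda(\pmb\sigma,\mathbf{B})$ is an eigenvalue; the simplicity of $\Lambda$ is what guarantees the kernel is one-dimensional and hence that your nonzero column actually spans it.
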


\begin{proof}
The proof is similar as that of Theorem 3.1 in \cite{Kong1996}.
\end{proof}

When $\pmb\omega$ (or $\mathbf{A}$) is fixed, we can get corresponding results on a neighborhood of $\mathbf{A}$ (or $\pmb\omega$)  as those   in Lemmas \ref{local continuity of eigenvalues}--\ref{continuous choice of eigenfunctions}.
Then we turn to present the continuity principle for the $n$-th eigenvalue.

\begin{lemma}\label{continuity principle}
Let $\mathcal{O}$ be a subset of $\Omega\times\mathcal{B}^{\mathbb{C}}$. If $\lambda_1$ is bounded from below on $\mathcal{O}$, then the restriction of   the $n$-th eigenvalue to $\mathcal{O}$  is continuous for each $n\geq1$.
\end{lemma}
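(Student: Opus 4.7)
The plan is to reduce continuity of $\lambda_n$ on $\mathcal{O}$ to the local eigenvalue-counting statement (Lemma~\ref{local continuity of eigenvalues}), by using the hypothesis $\lambda_1\geq M$ on $\mathcal{O}$ to fix, once and for all, a ``floor'' $r_1<M$ that no eigenvalue of any $(\pmb\sigma,\mathbf{B})\in\mathcal{O}$ can cross from below. The only real content is the conversion of an interval-count into a fixed index $n$, and that is exactly what the lower bound supplies.

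I will fix an arbitrary $(\pmb\omega_0,\mathbf{A}_0)\in\mathcal{O}$ and write $\mu:=\lambda_n(\pmb\omega_0,\mathbf{A}_0)$, denoting by $m$ its multiplicity and by $p$ the number of eigenvalues of $(\pmb\omega_0,\mathbf{A}_0)$ strictly less than $\mu$, so that $\lambda_{p+1}(\pmb\omega_0,\mathbf{A}_0)=\cdots=\lambda_{p+m}(\pmb\omega_0,\mathbf{A}_0)=\mu$ with $p<n\leq p+m$. For any $\varepsilon>0$ I may, after shrinking, assume that $\mu$ is the only eigenvalue of $(\pmb\omega_0,\mathbf{A}_0)$ in $[\mu-\varepsilon,\mu+\varepsilon]$, which is legitimate because the spectrum is discrete by Lemma~\ref{fundamental lemma}. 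Fixing $r_1<M$, the points $r_1$, $\mu-\varepsilon$, $\mu+\varepsilon$ are all non-eigenvalues of $(\pmb\omega_0,\mathbf{A}_0)$, so Lemma~\ref{local continuity of eigenvalues} applied in turn to the intervals $(r_1,\mu-\varepsilon)$ and $(r_1,\mu+\varepsilon)$ and intersected yields a single neighborhood $U$ of $(\pmb\omega_0,\mathbf{A}_0)$ in $\Omega\times\mathcal{B}^{\mathbb{C}}$ on which every $(\pmb\sigma,\mathbf{B})$ has exactly $p$ eigenvalues in the first interval and exactly $p+m$ in the second.

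The step where the hypothesis is used is the conversion from ``count in $(r_1,\,\cdot\,)$'' to ``count in $(-\infty,\,\cdot\,)$'': for $(\pmb\sigma,\mathbf{B})\in U\cap\mathcal{O}$ one has $\lambda_1(\pmb\sigma,\mathbf{B})\geq M>r_1$, so the two counts above equal the total numbers of eigenvalues of $(\pmb\sigma,\mathbf{B})$ strictly below $\mu-\varepsilon$ and strictly below $\mu+\varepsilon$, respectively. Hence $\lambda_{p+1}(\pmb\sigma,\mathbf{B}),\ldots,\lambda_{p+m}(\pmb\sigma,\mathbf{B})$ all lie in $[\mu-\varepsilon,\mu+\varepsilon]$, and since $p<n\leq p+m$ one concludes $|\lambda_n(\pmb\sigma,\mathbf{B})-\mu|\leq\varepsilon$, which is continuity of $\lambda_n$ at $(\pmb\omega_0,\mathbf{A}_0)$.

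The only conceptual obstacle—and the reason the hypothesis is exactly the right one—is the possibility that, absent a uniform lower bound, an eigenvalue of a nearby problem could leak into the counting window $(r_1,\mu+\varepsilon)$ from below as $(\pmb\sigma,\mathbf{B})$ varies in $\mathcal{O}$. This is precisely the escape-to-$-\infty$ behavior to be studied on the singular layers $\Sigma^{\mathbb{C}}_k$ in Section~\ref{Singularity of the $n$-th eigenvalue of  high dimensional Sturm-Liouville problems}; the uniform bound $\lambda_1\geq M$ rules it out across $\mathcal{O}$, and no further technical obstacle remains.
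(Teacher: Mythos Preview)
Your argument is correct and is essentially the approach the paper has in mind: the paper does not spell out a proof but simply says that, using Lemma~\ref{local continuity of eigenvalues}, the proof is similar to that of Theorem~1.40 in \cite{Kong1999}, which is exactly the interval-counting argument you wrote (use the uniform lower bound to convert counts in $(r_1,\,\cdot\,)$ into absolute indices). There is nothing to add.
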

\begin{lemma}\label{continuity-change-indices}
If $\mathcal{O}$ is a subset of $\Omega\times \mathcal{B}^{\mathbb{C}}$, $(\pmb\omega,\mathbf{A})\notin\mathcal{O}$ is an accumulation point of $\mathcal{O}$,
\begin{align*}
\lim\limits_{\mathcal{O}\ni(\pmb\sigma,\mathbf{B})\to(\pmb\omega,\mathbf{A})}\lambda_{n}(\pmb\sigma,\mathbf{B})=-\infty
\end{align*}
for any $n=1,\cdots,m$, where $m\geq0$, and $\lambda_{m+1}$ is bounded from below on $\mathcal{O}$, then
\begin{align*}
\lim\limits_{\mathcal{O}\ni(\pmb\sigma,\mathbf{B})\to(\pmb\omega,\mathbf{A})}\lambda_{n}(\pmb\sigma,\mathbf{B})=\lambda_{n-m}(\pmb\omega,\mathbf{A})
\end{align*}
for any $n\geq m+1$.
\end{lemma}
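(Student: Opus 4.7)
The plan is to localize near the limit value $\mu:=\lambda_{n-m}(\pmb\omega,\mathbf{A})$ via Lemma \ref{local continuity of eigenvalues}, and then use the blow-up of the first $m$ eigenvalues to identify the correct index shift. Fix $n\geq m+1$, and let $p,q$ be determined by the analytic multiplicity of $\mu$: with the conventions $\lambda_0(\pmb\omega,\mathbf{A})=-\infty$ and $\lambda_k(\pmb\omega,\mathbf{A})=+\infty$ if absent, write
\begin{align*}
\lambda_p(\pmb\omega,\mathbf{A})<\mu=\lambda_{p+1}(\pmb\omega,\mathbf{A})=\cdots=\lambda_q(\pmb\omega,\mathbf{A})<\lambda_{q+1}(\pmb\omega,\mathbf{A}).
\end{align*}
Since $\mu=\lambda_{n-m}(\pmb\omega,\mathbf{A})$, we have $p+1\le n-m\le q$. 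For an arbitrary $\varepsilon>0$, choose $r_1\in(\lambda_p(\pmb\omega,\mathbf{A}),\mu)\cap(\mu-\varepsilon,\mu)$ and $r_2\in(\mu,\lambda_{q+1}(\pmb\omega,\mathbf{A}))\cap(\mu,\mu+\varepsilon)$ so that neither is an eigenvalue of $(\pmb\omega,\mathbf{A})$.

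Next, let $L$ be a finite lower bound for $\lambda_{m+1}$ on $\mathcal{O}$, and pick $r_0<\min\{L,\lambda_1(\pmb\omega,\mathbf{A}),r_1\}$ that is not an eigenvalue of $(\pmb\omega,\mathbf{A})$. Apply Lemma \ref{local continuity of eigenvalues} twice: once to the interval $(r_1,r_2)$ (containing exactly $q-p$ eigenvalues of $(\pmb\omega,\mathbf{A})$, all equal to $\mu$), and once to $(r_0,r_1)$ (containing exactly $p$ eigenvalues). This yields a neighborhood $U$ of $(\pmb\omega,\mathbf{A})$ in $\Omega\times\mathcal{B}^{\mathbb{C}}$ such that every $(\pmb\sigma,\mathbf{B})\in U$ has exactly $p$ eigenvalues in $(r_0,r_1)$ and exactly $q-p$ eigenvalues in $(r_1,r_2)$, with none of $r_0,r_1,r_2$ an eigenvalue of $(\pmb\sigma,\mathbf{B})$.

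Now invoke the hypothesis that $\lambda_k(\pmb\sigma,\mathbf{B})\to-\infty$ for each $1\le k\le m$ along $\mathcal{O}$: there is a smaller neighborhood $U'\subset U$ of $(\pmb\omega,\mathbf{A})$ with $\lambda_k(\pmb\sigma,\mathbf{B})<r_0$ for every $(\pmb\sigma,\mathbf{B})\in U'\cap\mathcal{O}$ and $1\le k\le m$. On the other hand, $\lambda_{m+1}(\pmb\sigma,\mathbf{B})\ge L>r_0$ on $\mathcal{O}$. Therefore, on $U'\cap\mathcal{O}$, the $p$ eigenvalues lying in $(r_0,r_1)$ must be precisely $\lambda_{m+1}(\pmb\sigma,\mathbf{B}),\ldots,\lambda_{m+p}(\pmb\sigma,\mathbf{B})$, and the $q-p$ eigenvalues in $(r_1,r_2)$ must be $\lambda_{m+p+1}(\pmb\sigma,\mathbf{B}),\ldots,\lambda_{m+q}(\pmb\sigma,\mathbf{B})$. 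Since $m+p+1\le n\le m+q$, we get $\lambda_n(\pmb\sigma,\mathbf{B})\in(r_1,r_2)\subset(\mu-\varepsilon,\mu+\varepsilon)$, proving the claim by letting $\varepsilon\to 0$.

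The argument is essentially bookkeeping of indices, and there is no serious analytic obstacle once Lemma \ref{local continuity of eigenvalues} is available; the only subtlety is to simultaneously control both the finitely many eigenvalues below $r_1$ (which requires the uniform lower bound from the hypothesis on $\lambda_{m+1}$ together with Rouch\'e applied on $(r_0,r_1)$) and the eigenvalues clustering at $\mu$ (handled by Rouch\'e on $(r_1,r_2)$). A minor point to verify is that $r_0$ can indeed be chosen below the uniform bound $L$ without being an eigenvalue of $(\pmb\omega,\mathbf{A})$, which is immediate as the spectrum is discrete.
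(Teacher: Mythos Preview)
Your argument is correct and follows essentially the same approach as the paper. The paper does not spell out the details, merely noting that the proof follows from Lemma~\ref{local continuity of eigenvalues} in the same way as Theorem~1.41 of \cite{Kong1999}; your write-up is precisely such a detailed execution, localizing around $\lambda_{n-m}(\pmb\omega,\mathbf{A})$ and using the uniform lower bound on $\lambda_{m+1}$ together with the blow-up of $\lambda_1,\ldots,\lambda_m$ to pin down the index shift.
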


By using Lemma \ref{local continuity of eigenvalues},
the proofs of  Lemmas \ref{continuity principle}--\ref{continuity-change-indices} are similar to those of Theorems 1.40--1.41 in \cite{Kong1999}, respectively.

\section{Analysis on $1$-dimensional results}\label{Analysis on $1$-dimensional results}

In this section, we reform the singular boundary conditions in the frame  (\ref{coordinate}) and refine the results of Theorems 3.39 and 3.76 in \cite{Kong1999}.

The explicit coordinate systems  (\ref{coordinate}) in 1-dimensional case are as follows:
\begin{align}\label{coordinate-1-dim}
\mathcal{O}_{\emptyset}^{\mathbb{C}}=&\left \{\left [\begin{array} {cccc}s_{11}&s_{12}&1&0\\
\bar{s}_{12}&s_{22}&0&1\end{array}  \right ]:\; s_{11},s_{22}\in\mathbb{R},s_{12}\in \mathbb{C}\right \},\vspace{3mm}\\\nonumber
 \mathcal{O}_{\{1\}}^{\mathbb{C}}=&\left \{\left [\begin{array} {cccc}-1&s_{12}&s_{11}&0\\
0&s_{22}&\bar{s}_{12}&1\end{array}  \right ]:\; s_{11},s_{22}\in\mathbb{R},s_{12}\in \mathbb{C}\right \},\vspace{3mm}\\\nonumber
\mathcal{O}_{\{2\}}^{\mathbb{C}}=&\left \{\left [\begin{array} {cccc}s_{11}&0&1&s_{12}\\
\bar{s}_{12}&-1&0&s_{22}\end{array}  \right ]:\; s_{11},s_{22}\in\mathbb{R},s_{12}\in \mathbb{C}\right \},\vspace{3mm}\\\nonumber
 \mathcal{O}_{\{1,2\}}^{\mathbb{C}}=&\left \{\left [\begin{array} {cccc}-1&0&s_{11}&s_{12}\\
0&-1&\bar{s}_{12}&s_{22}\end{array}  \right ]:\; s_{11},s_{22}\in\mathbb{R},s_{12}\in \mathbb{C}\right \}.
\end{align}

In order to refine the above results ,we need the following notation, which will be used for any $d\geq 1$ in the sequel. For a nonempty subset $K=\{n_1,\cdots,n_{m_0}\}\subset \{1,\cdots, 2d\}$ and for any $\mathbf{A}\in\mathcal{O}_K^\mathbb{C}$, let
\begin{align}\label{def-SK}
S_K(\mathbf{A})=\begin{pmatrix}s_{n_1n_1}&s_{n_1n_2}&\cdots&s_{n_{1}n_{m_0}}\\
\bar{s}_{n_1n_2}&{s}_{n_2n_2}&\cdots&s_{n_2n_{m_0}}\\
\vdots&\vdots&&\vdots\\
\bar{s}_{n_{1}n_{m_0}}&\bar{s}_{n_{2}n_{m_0}}&\cdots&s_{n_{m_0}n_{m_0}}\end{pmatrix}.
\end{align}
In  1-dimensional case, $S_{\{i\}}(\mathbf{A})=(s_{ii})$, $i=1,2$, and  $S_{\{1,2\}}(\mathbf{A})=\begin{pmatrix}s_{11}&s_{12}\\\bar{s}_{12}&s_{22}\end{pmatrix}$.

Let $n^-(S_K(\mathbf{A})), n^0(S_K(\mathbf{A}))$ and $n^+(S_K(\mathbf{A}))$ denote the total multiplicity of negative, zero and positive eigenvalues of $S_K(\mathbf{A})$, respectively.
For a nonempty subset $K\subset\{1,\cdots, 2d\}$, define
\begin{align}\label{def-Jn-n0n+}
&J^{(n^0,n^+,n^-)}_{\mathcal{O}_{K}^{\mathbb{C}}}\\\nonumber
:=&\{\mathbf{A}\in\mathcal{O}_{K}^{\mathbb{C}}|n^0(S_K(\mathbf{A}))=n^0,n^+(S_K(\mathbf{A}))=n^+,n^-(S_K(\mathbf{A}))=n^-\}
\end{align}
for three nonnegative integers $n^0,$ $n^+$ and $n^-$ satisfying $n^0+n^++n^-=\sharp(K)$. Then we get the following refinement from Theorem 3.76 in \cite{Kong1999}. One key fact
in the following proposition is  that $n^+(S_K(\mathbf{B}))-n^+(S_K(\mathbf{A}))$ is the number of eigenvalues which tend to $-\infty$ as $\mathbf{B}\to\mathbf{A}$.

\begin{proposition}\label{1dimrefinement}
\begin{itemize}
\item[{\rm (i)}] The restriction of  $\lambda_n$ to    $\Sigma_k^\mathbb{C}$ is continuous for each $n\geq1$, where $k=0,1,2$.
\item[{\rm (ii)}] Consider the restriction of $\lambda_n$ to $\mathcal{O}_K^{\mathbb{C}}$ for each $n\geq1$, where $K\subset\{1,2\}$.
\begin{itemize}
\item[{\rm (iia)}] The restriction of $\lambda_n$ to $\mathcal{O}_\emptyset^{\mathbb{C}}$ is continuous.
\item[{\rm (iib)}] Let $K$ be nonempty, $0\leq n^0<n^0_1\leq \sharp(K)$, $n^+\geq n_1^+$ and $n^-\geq n_1^-$. Then
for any $\mathbf{A}\in J^{(n_1^0,n_1^+,n_1^-)}_{\mathcal{O}_{K}^{\mathbb{C}}}$, we have
\begin{align*}
\lim\limits_{J^{(n^0,n^+,n^-)}_{\mathcal{O}_{K}^{\mathbb{C}}}\ni\mathbf{B}\to\mathbf{A}}\lambda_n(\mathbf{B})&=-\infty,\;n\leq n^+-n_1^+,\\
\lim\limits_{J^{(n^0,n^+,n^-)}_{\mathcal{O}_{K}^{\mathbb{C}}}\ni\mathbf{B}\to\mathbf{A}}\lambda_n(\mathbf{B})&=
\lambda_{n-(n^+-n_1^+)}(\mathbf{A}),\;n >n^+-n_1^+.
\end{align*}
\end{itemize}
\end{itemize}
\end{proposition}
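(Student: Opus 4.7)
The plan is to translate Theorems 3.39 and 3.76 of \cite{Kong1999}, which describe the singular boundary conditions and asymptotic behavior of 1-dimensional Sturm-Liouville eigenvalues, into the coordinate-chart and signature language of (\ref{coordinate-1-dim}) and (\ref{def-SK}). The guiding observation, already visible in Rellich's example via the chart $\mathcal{O}_{\{2\}}^{\mathbb{C}}$ with $S=\mathrm{diag}(1,\kappa)$, is that a \emph{positive} eigenvalue of $S_K(\mathbf{B})$ collapsing to zero as $\mathbf{B}\to\mathbf{A}$ drives exactly one Sturm-Liouville eigenvalue to $-\infty$, while a \emph{negative} eigenvalue of $S_K(\mathbf{B})$ collapsing to zero leaves the spectrum continuous (cf.\ Rellich's $\kappa\to 0^-$ branch). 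Summing over all eigenvalues of $S_K$ that vanish in the limit yields the count $n^+-n_1^+$.

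For part (iia), each $\mathbf{A}\in\mathcal{O}_\emptyset^{\mathbb{C}}$ has $B=I_2$, so $n^0(B)=0$ and $\mathbf{A}\in\Sigma_0^{\mathbb{C}}$. A standard variational estimate using Hypothesis~1 shows that $\lambda_1$ is bounded below on compact subsets of $\mathcal{O}_\emptyset^{\mathbb{C}}$, so Lemma \ref{continuity principle} yields continuity of $\lambda_n$ for every $n\ge 1$. For part (iib), I would fix $\mathbf{A}\in J^{(n_1^0,n_1^+,n_1^-)}_{\mathcal{O}_K^{\mathbb{C}}}$ and let $\mathbf{B}\to\mathbf{A}$ within $J^{(n^0,n^+,n^-)}_{\mathcal{O}_K^{\mathbb{C}}}$. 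Diagonalizing $S_K(\mathbf{A})$ by a unitary $U$ and interpolating linearly to $S_K(\mathbf{B})$, the continuous dependence of Hermitian eigenvalues together with the hypothesis $n^0<n_1^0$ forces exactly $n_1^0-n^0$ eigenvalues of $S_K(\mathbf{B})$ to tend to zero, of which $n^+-n_1^+$ approach from the positive side and $n^--n_1^-$ from the negative side. Applying \cite[Thm.\ 3.76]{Kong1999} to each such one-parameter degeneration produces exactly $n^+-n_1^+$ SL eigenvalue branches escaping to $-\infty$. Lemma \ref{local continuity of eigenvalues} gives a uniform lower bound on $\lambda_{n^+-n_1^++1}(\mathbf{B})$ by pinning the remaining eigenvalues near their limits, and Lemma \ref{continuity-change-indices} then delivers the index shift $\lambda_n(\mathbf{B})\to\lambda_{n-(n^+-n_1^+)}(\mathbf{A})$ for $n>n^+-n_1^+$.

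For part (i), continuity on $\Sigma_0^{\mathbb{C}}$ is classical. On $\Sigma_k^{\mathbb{C}}$ with $k=1,2$, the signature $(n^+,n^-)$ is locally constant along the layer: any change would require a nonzero eigenvalue of $S_K$ to cross zero, temporarily raising $n^0$ above $k$ and exiting the layer. Thus locally $\Sigma_k^{\mathbb{C}}$ coincides with a single stratum $J^{(k,n^+,n^-)}_{\mathcal{O}_K^{\mathbb{C}}}$, and within such a stratum the hypothesis $n^0<n_1^0$ of (iib) never holds, so no eigenvalue can escape to $-\infty$; hence $\lambda_1$ is locally bounded below and Lemma \ref{continuity principle} concludes. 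The main obstacle I expect is the precise matching in (iib) of our parametrization (\ref{coordinate-1-dim}) with the separated/coupled/mixed formulation in \cite{Kong1999}, in particular pinning down sign conventions so that a \emph{positive} (not a negative) eigenvalue of $S_K$ becoming zero is what corresponds to the $-\infty$-escape direction; once this sign is fixed in 1-dim via the computation in (\ref{AKBKBKAK1})--(\ref{AKBKBKAK2}) specialized to $d=1$, the counting in the proposition follows.
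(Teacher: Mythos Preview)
Your proposal is correct and matches the paper's approach: the paper does not give its own proof of this proposition, presenting it simply as ``the following refinement from Theorem 3.76 in \cite{Kong1999}'', with the single remark that $n^+(S_K(\mathbf{B}))-n^+(S_K(\mathbf{A}))$ counts the eigenvalues escaping to $-\infty$. Your sketch of how the translation from the separated/coupled formulation of \cite{Kong1999} into the charts (\ref{coordinate-1-dim}) and the signature data (\ref{def-SK}) would go---including the sign check that a \emph{positive} eigenvalue of $S_K$ collapsing to zero is the one forcing $\lambda_1\to-\infty$---is exactly the content the paper leaves implicit, and your use of Lemmas \ref{continuity principle} and \ref{continuity-change-indices} to finish is in line with how the paper argues elsewhere.

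One small caution on your argument for part (i): you reduce continuity on $\Sigma_k^{\mathbb{C}}$ to local constancy of $(n^+,n^-)$ within a \emph{single} chart $\mathcal{O}_K^{\mathbb{C}}$, but $\Sigma_k^{\mathbb{C}}$ is not contained in any one chart, and the identification $n^0(B)=n^0(S_K(\mathbf{A}))$ holds only within $\mathcal{O}_K^{\mathbb{C}}$. This is harmless here since continuity is local and every point of $\Sigma_k^{\mathbb{C}}$ lies in some $\mathcal{O}_K^{\mathbb{C}}$ with $\sharp(K)\ge k$, but you should say so explicitly. Note also that the paper later proves the layer-wise continuity (your part (i)) in full generality as Theorem \ref{omegatimesboundary condition continuity}, via a direct quadratic-form estimate rather than by reduction to \cite{Kong1999}; in the 1-dimensional setting either route works.
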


\begin{remark} Inspired of \cite{Liu2018,Long1991,Long2002}, we provide an  intuitional representation of sets in the space  of real boundary conditions, which is also  helpful  to understand
the global concept of singular boundary conditions.
Let
\begin{align*}
 \mathcal{O}_{12,++}^{\mathbb{R}}=&\left \{\mathbf{A}\in\mathcal{O}_{\{1,2\}}^{\mathbb{R}}:\; s_{11}>0,s_{12}\in \mathbb{R},s_{11}s_{22}>|s_{12}|^2 \right \},\vspace{3mm}\\
\mathcal{O}_{12,--}^{\mathbb{R}}=&\left \{\mathbf{A}\in\mathcal{O}_{\{1,2\}}^{\mathbb{R}}:\; s_{11}<0,s_{12}\in \mathbb{R},s_{11}s_{22}>|s_{12}|^2 \right \},\vspace{3mm}\\
\mathcal{O}_{12,+-}^{\mathbb{R}}=&\left \{\mathbf{A}\in\mathcal{O}_{\{1,2\}}^{\mathbb{R}}:\; s_{12}\in \mathbb{R},s_{11}s_{22}<|s_{12}|^2 \right \},\vspace{3mm}\\
\mathcal{O}_{12,+0}^{\mathbb{R}}=&\left \{\mathbf{A}\in\mathcal{O}_{\{1,2\}}^{\mathbb{R}}:\; s_{11}+s_{22}>0, s_{12}\in \mathbb{R},s_{11}s_{22}=|s_{12}|^2 \right \},\vspace{3mm}\\
\mathcal{O}_{12,-0}^{\mathbb{R}}=&\left \{\mathbf{A}\in\mathcal{O}_{\{1,2\}}^{\mathbb{R}}:\; s_{11}+s_{22}<0, s_{12}\in \mathbb{R},s_{11}s_{22}=|s_{12}|^2 \right \}.
\end{align*}

Consider an element $\mathbf{A}=[-I\;|\; S]$ in the coordinate chart $(\mathcal{O}^{\mathbb{R}}_{\{1,2\}},\phi_{12})$, where $\phi_{12}:(\mathbf{A}\rightarrow S:=\begin{pmatrix}s_{11}&s_{12}\\s_{12}&s_{22}\end{pmatrix})$.
Firstly we define a map
\begin{align*}
\mathrm{rep}_{12}=\tilde{\Delta}^{-1}\circ\phi_{12}^{-1}: \{S|s_{11},s_{12},s_{22}\in \mathbb{R}\}&\rightarrow (D^1\times S^1)/f,\\
S&\mapsto[(r,z,\theta)],
\end{align*}
where
\begin{align*}
[(r,z,\theta)]\in(D^1\times S^1)/f=&\{(r,z,\theta)|(r-2)^2+z^2<1\}\cup\\
&\{\{(r,z,\theta),(r,z,\theta+\pi)\}|(r-2)^2+z^2=1\},
\end{align*}
the map $\tilde{\Delta}:(D^1\times S^1)/f\rightarrow Lag(2,\mathbb{R})\simeq\mathcal{B}^{\mathbb{R}}$ is a homeomorphism defined in the  proof of Theorem 1 in \cite{Liu2018}.

Under the map $\mathrm{rep}_{12}$, we obtain $\mathrm{rep}_{12}(0)=(2,0,0)$ and $\mathrm{rep}_{12}(-\tan\frac{\theta}{2} I_2)=(2,0,\theta)$ with $\theta\in\mathbb{R}/2\pi\mathbb{Z}$. Furthermore,
\begin{align*}
\phi_{12}^{-1}(-\tan\frac{\theta}{2} I_2)\ is\ \left\{
\begin{aligned}
&in\ \mathcal{O}^{\mathbb{R}}_{12,--},\ if\ \theta\in(0,\pi),\\
&Dirichlet\ boundary\ condition,\ if\ \theta=0,\\
&in\ \mathcal{O}^{\mathbb{R}}_{12,++},\ if\ \theta\in(-\pi,0),\\
&converging\ to\ Neumann\ boundary\ condition,\\
&\;\;\;\;\;\;\;\;\;\;\;\;\;\;\;\;\;\;\;\;\;\;\;\;\;\;\;\;\;\;\;\;\;\;\;\;\;\;\;\;\;\;\;\;\;\;\;\;\;\;\;\;\;\;\; if\ \theta\rightarrow\pm\pi.
\end{aligned}
\right.
\end{align*}

\begin{figure}
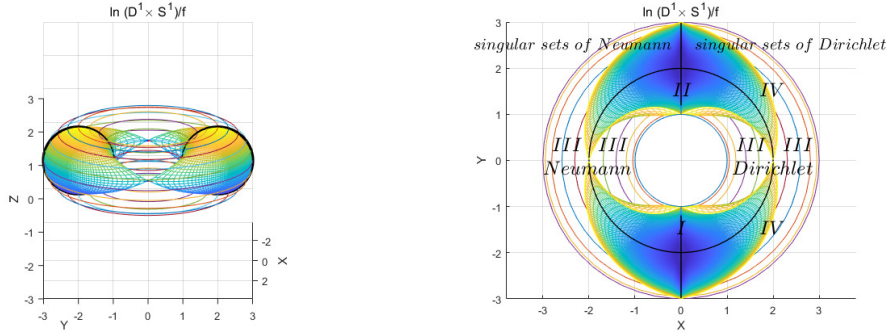

\begin{minipage}{7cm}
\includegraphics[width=6cm]{partitions-1.jpg}
\end{minipage}
\begin{minipage}{7cm}
\includegraphics[width=6cm]{partitions-2.jpg}
\end{minipage}
\caption{\small{The intuitional representation of $\mathcal{O}^{\mathbb{R}}_{12,++},\mathcal{O}^{\mathbb{R}}_{12,+-},\mathcal{O}^{\mathbb{R}}_{12,--},\mathcal{O}^{\mathbb{R}}_{12,+0},\mathcal{O}^{\mathbb{R}}_{12,-0}$.}}
\end{figure}

In Figure 1, the inner torus with two shrinking points denotes two singular cycles, i.e. the part with $x\leq0 \;(or x\geq0)$ is the set of elements which have common subspace with Neumann (or Dirichlet) boundary condition $(\mathrm{rep}_{12}(-2,0,0))$ $(or\; \mathrm{rep}_{12}(2,0,0))$. Note that inner torus lies on the outside torus if $x=0$.

Since the left part $(x\leq0)$ of the inner torus is not in the range of $\mathrm{rep}_{12}$, by the path connectedness of $\mathcal{O}^{\mathbb{R}}_{12,++},\mathcal{O}^{\mathbb{R}}_{12,+-},\mathcal{O}^{\mathbb{R}}_{12,--},\mathcal{O}^{\mathbb{R}}_{12,+0}$ and $\mathcal{O}^{\mathbb{R}}_{12,-0}$ we have\\
Region I denotes $\mathcal{O}^{\mathbb{R}}_{12,++}$, i.e. the inside of the inner torus under $Y$-axis. The curve $\mathrm{rep}_{12}(-\frac{\tan\theta}{2} I_2),\ \theta\in(-\pi,0)$  is in Region I. \\
Region II denotes $\mathcal{O}^{\mathbb{R}}_{12,--}$, i.e. the inside of the inner torus upper $Y$-axis. The curve $\mathrm{rep}_{12}(-\frac{\tan\theta}{2} I_2),\ \theta\in(0,\pi)$  is in Region II.\\
Region III denotes $\mathcal{O}^{\mathbb{R}}_{12,+-}$, i.e. the outside of the inner torus in $(D^1\times S^1)/f$.\\
Region IV denotes $\mathcal{O}^{\mathbb{R}}_{12,+0}$ and $\mathcal{O}^{\mathbb{R}}_{12,-0}$. More precisely,  the parts of the inner torus with $X>0,Y<0$ and  with $X,Y>0$ denote  $\mathcal{O}^{\mathbb{R}}_{12,+0}$ and  $\mathcal{O}^{\mathbb{R}}_{12,-0}$, respectively.
\end{remark}

\section{Equality of multiplicities of an eigenvalue}\label{Equality of multiplicities of an eigenvalue}
In this section, we  show the equivalence of the analytic, algebraic and geometric multiplicities of an eigenvalue.

\begin{theorem}\label{ equivalence of three multiplicities of an eigenvalue}
The analytic, algebraic and geometric multiplicities of an eigenvalue of $(\pmb\omega,\mathbf{A})\in\Omega\times \mathcal{B}^\mathbb{C}$ are equal.
\end{theorem}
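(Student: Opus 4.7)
The plan is to establish the three equalities in two stages: first \emph{algebraic} $=$ \emph{geometric} using the self-adjointness of $T:=T_{(\pmb\omega,\mathbf{A})}$, and then \emph{analytic} $=$ \emph{geometric} using the Smith normal form of the characteristic matrix $M(\lambda):=A\Phi_\lambda+B\Psi_\lambda$ from Lemma \ref{fundamental lemma} together with an analysis of the resolvent $R_\lambda:=(T-\lambda)^{-1}$.

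First I would show algebraic $=$ geometric via a short induction exploiting self-adjointness. Suppose $y\in E_{\lambda_0}$ satisfies $(T-\lambda_0)^k y=0$ with $k$ minimal, and assume toward contradiction that $k\geq 2$. Set $z:=(T-\lambda_0)^{k-1}y\neq 0$; then $z$ is an eigenfunction and
\[
\|z\|_W^2=\langle (T-\lambda_0)^{k-1}y,z\rangle_W=\langle y,(T-\lambda_0)^{k-1}z\rangle_W=0,
\]
since $(T-\lambda_0)^{k-1}z=(T-\lambda_0)^{2k-2}y=0$ whenever $2k-2\geq k$. This contradicts $z\neq 0$, so $k=1$ and $E_{\lambda_0}$ coincides with the geometric eigenspace.

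Next I would prove analytic $=$ geometric via the local structure of $M(\lambda)$. The geometric multiplicity equals $g:=2d-\mathrm{rank}\,M(\lambda_0)$. Since the entries of $M(\lambda)$ are entire, over the discrete valuation ring of germs at $\lambda_0$ one has a Smith normal form
\[
M(\lambda)=U(\lambda)\,\mathrm{diag}\!\big((\lambda-\lambda_0)^{a_1},\ldots,(\lambda-\lambda_0)^{a_{2d}}\big)\,V(\lambda),
\]
with $U,V$ analytic and invertible at $\lambda_0$ and $0\leq a_1\leq\cdots\leq a_{2d}$. Then $g=\#\{i:a_i>0\}$ and the analytic multiplicity equals $\sum_i a_i$, so it suffices to show $a_i\leq 1$ for every $i$. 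To bound the Smith invariants I would invoke the resolvent. Using variation of parameters, $R_\lambda f$ can be written explicitly in terms of $\Phi_\lambda,\Psi_\lambda$ and $M(\lambda)^{-1}$, so the pole order of $R_\lambda$ at $\lambda_0$ equals $a_{2d}=\max_i a_i$. Since $T$ is self-adjoint and (by the first step) admits no true generalized eigenfunctions at $\lambda_0$, the Laurent expansion of $R_\lambda$ at $\lambda_0$ has only a simple pole $-\Pi/(\lambda-\lambda_0)$, where $\Pi$ is the orthogonal projection onto the eigenspace. Hence $a_{2d}\leq 1$, all $a_i\in\{0,1\}$, and $\sum_i a_i=g$, closing the chain of equalities.

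The main obstacle is the reverse inequality in ``pole order of $R_\lambda$ at $\lambda_0$ equals $a_{2d}$''. The upper bound is routine from the Green's function formula, but the lower bound demands producing an $f\in L_W^2$ whose image under $R_\lambda$ has a pole of order exactly $a_{2d}$. I would handle this by testing against a column of $V(\lambda_0)^{-1}$ corresponding to the largest invariant and exploiting the invertibility of $U(\lambda_0)$ and $V(\lambda_0)$ to exhibit a nonvanishing Laurent coefficient. A clean alternative, in the spirit of Naimark \cite{Naimark1968}, is to relate the order of $\det M$ at $\lambda_0$ directly to the algebraic multiplicity via the logarithmic-derivative identity $\frac{d}{d\lambda}\log\det M(\lambda)=\mathrm{tr}(M(\lambda)^{-1}M'(\lambda))$, tracing the pole back to the spectral projection.
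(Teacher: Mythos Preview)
Your proof is correct in outline and takes a genuinely different route from the paper's. Both arguments share the first step (algebraic $=$ geometric via self-adjointness), but diverge on analytic $=$ geometric. The paper proceeds by hand: choosing a basis of solutions whose first $p$ members are the eigenfunctions, it factors $(\lambda-\lambda_*)^p$ out of $\Gamma_{(\pmb\omega,\mathbf{A})}(\lambda)$ column by column and then shows by contradiction that the residual determinant $\tilde\Gamma(\lambda_*)\neq 0$. A nonzero kernel vector of the reduced matrix is shown to yield either an additional eigenfunction (contradicting the basis choice) or, via the first Taylor coefficients $\varphi_i^{(1)}=\partial_\lambda y_i|_{\lambda=\lambda_*}$, a function $\tilde\psi\in D_{\mathbf A}$ satisfying $(T-\lambda_*)\tilde\psi=(\text{nonzero eigenfunction})$, i.e.\ a true generalized eigenfunction, contradicting the first step. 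Your approach instead packages the problem structurally through the Smith form of $M(\lambda)$ and bounds the top invariant $a_{2d}$ by the pole order of the resolvent. This is more conceptual and makes transparent \emph{why} self-adjointness forces every Smith exponent to lie in $\{0,1\}$; the trade-off is the extra work you flag in the lower bound ``pole order $\geq a_{2d}$'', which requires exhibiting an $f$ whose boundary defect hits the appropriate column of $V(\lambda_0)^{-1}$ (doable, since the boundary-defect map $L^2_W\to\mathbb{C}^{2d}$ is surjective, but a layer of analysis the paper's direct construction sidesteps). In short, the paper's argument is more elementary and self-contained; yours is structurally cleaner and closer to the general operator-theoretic picture.
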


\begin{proof}
Fix $\lambda_*$ be an eigenvalue of $(\pmb\omega,\mathbf{A})$ with $\pmb\omega=(P,Q,W)$ and $\mathbf{A}=[A\,|\,B]$. Since $T_{(\pmb\omega,\mathbf{A})}$ is self-adjoint, the algebraic and geometric multiplicities of $\lambda_*$ coincide. We shall show that the analytic and  geometric ones are the same.

By $p$ and $\kappa$ denote the geometric and analytic multiplicities of $\lambda_*$, respectively. Let $\varphi_i$, $1\leq i\leq p$, be the linearly independent eigenfunctions of $\lambda_*$. Choose $\varphi_i$, $p+1\leq i\leq 2d$ be the solutions of (\ref{SL-equation}) with $\lambda=\lambda_*$ such that $\varphi_i$, $1\leq i\leq 2d$, are linearly independent.
Let $y_i(\cdot,\lambda)$ be the solutions of (\ref{SL-equation}) with $\lambda\in\mathbb{C}$ such that $y_i(a,\lambda)=\varphi_i(a), Py'_i(a,\lambda)=P\varphi'_i(a), 1\leq i\leq 2d$.
Then $y_i(\cdot,\lambda_*)=\varphi_i$ and $y_i$ has the Taylor expansion $$y_i(x,\lambda)=\sum\limits_{j=0}^{\infty}\varphi^{(j)}_i(x)(\lambda-\lambda_*)^j$$
 with $\varphi_i^{(0)}=\varphi_i$, $1\leq i\leq 2d$.
Let $\tilde \Phi_\lambda=\begin{pmatrix}y_1&\cdots&y_{2d}\\Py_1'&\cdots&Py_{2d}'\end{pmatrix}$.
By (\ref{Gamma-lambda}), we have
\begin{align}\label{Gamma-for-multiplicity-eigenvalue}
&\Gamma_{(\pmb\omega, \mathbf{A})}(\lambda)\nonumber\\
=&\det (\tilde A+\tilde B \tilde \Phi_\lambda(b)\tilde\Phi^{-1}_\lambda(a))
=\det (\tilde A \tilde\Phi_\lambda(a)+\tilde B \tilde \Phi_\lambda(b))\det(\tilde\Phi^{-1}_\lambda(a)),
\end{align}
where
$$(\tilde A\;,\;\tilde B)=(A\;,\;B)\begin{pmatrix}-I_d&0&0&0\\0&0&I_d&0\\0&I_d&0&0\\0&0&0&I_d\end{pmatrix}.$$
Note that
$$\tilde A\begin{pmatrix}\varphi_i(a)\\P\varphi_i'(a)\end{pmatrix}+\tilde B \begin{pmatrix}\varphi_i(b)\\P\varphi_i'(b)\end{pmatrix}=0,\;1\leq i\leq p,$$
which yields that the $i$-th column of $\Gamma_{(\pmb\omega, \mathbf{A})}(\lambda)$ must contain the factor $(\lambda-\lambda_*)$. So
$$\Gamma_{(\pmb\omega, \mathbf{A})}(\lambda)=(\lambda-\lambda_*)^p\tilde \Gamma_{(\pmb\omega, \mathbf{A})}(\lambda).$$
It suffices to show that $\tilde \Gamma_{(\pmb\omega, \mathbf{A})}(\lambda_*)\neq0$.
Let
 $$\hat \Phi=\begin{pmatrix}\varphi_1^{(1)}&\cdots&\varphi_p^{(1)}&\varphi_{p+1}&\cdots&\varphi_{2d}\\
P\varphi_1^{(1)'}& \cdots& P\varphi_p^{(1)'}&P\varphi'_{p+1}&\cdots& P\varphi'_{2d}\end{pmatrix}.$$
Then
\begin{align}\label{Gamma-for-multiplicity-eigenvalue2}
\tilde \Gamma_{(\pmb\omega, \mathbf{A})}(\lambda_*)=\det(\tilde A \hat\Phi(a)+\tilde B\hat \Phi(b))\det(\tilde\Phi^{-1}_{\lambda_*}(a)).
\end{align}
Suppose $\tilde \Gamma_{(\pmb\omega, \mathbf{A})}(\lambda_*)=0$. By $\chi_1,\cdots,\chi_{2d}$ denote the columns of $\tilde A \hat\Phi(a)+\tilde B\hat \Phi(b)$. Then
\begin{align}\label{Contradiction-for-multiplicity-eigenvalue1}
\sum\limits_{i=1}^{2d}c_i\chi_i=0
\end{align}
for   $c_{1},\cdots,c_{2d}\in\mathbb{C}$ to be not all vanished. We divide the discussion into two cases below.

Case 1. $c_1=\cdots=c_p=0$.

Let $\psi=\sum\limits_{i=p+1}^{2d}c_i\varphi_i$. Then $\psi$ is a nontrivial solution of (\ref{SL-equation}) since $\varphi_{p+1},\cdots,$ $\varphi_{2d}$ are linearly independent solutions of (\ref{SL-equation}) with $\lambda=\lambda_*$.
By (\ref{Contradiction-for-multiplicity-eigenvalue1}), we get
\begin{align*}\tilde A \begin{pmatrix}\psi(a)\\P\psi'(a)\end{pmatrix}+\tilde B \begin{pmatrix} \psi(b)\\ P\psi'(b)\end{pmatrix}=0,\end{align*}
which implies that $\psi$ is an eigenfunction for $\lambda_*$. Thus
$\psi=\sum\limits_{i=1}^{p}d_i\varphi_i=\sum\limits_{i=p+1}^{2d}c_i\varphi_i$ with $d_1,\cdots,d_p$ to be not all zero, which is a contradiction since $\varphi_1,\cdots,\varphi_{2d}$ are linearly  independent.

Case 2. $c_1,\cdots,c_p$ are not all vanished.

Let
$$\tilde y=\sum\limits_{i=1}^pc_iy_i+\sum\limits_{i=p+1}^{2d}c_{i}(\lambda-\lambda_*)y_i.
$$
Then $\tilde y$ is a nontrivial solution of
$$-(P\tilde y')'+Q\tilde y=\lambda W \tilde y.$$
It is obvious that $\tilde y(\cdot,\lambda_*)$ satisfies the boundary condition $\mathbf{A}$. Differentiating the above equation by $\lambda$, we get
\begin{align}\label{Differentiating-equation}
-(P({\partial \tilde y\over \partial \lambda})')'+Q{\partial \tilde y\over \partial \lambda}=W\tilde y+\lambda W{\partial \tilde y\over \partial \lambda}.
\end{align}
Let $\lambda=\lambda_*$. Then ${\partial \tilde y\over \partial \lambda}|_{\lambda=\lambda_*}$ is nontrivial since otherwise, by (\ref{Differentiating-equation}) $\tilde y(\cdot,\lambda_*)=0$, which is a contradiction. Clearly, ${\partial \tilde y\over \partial \lambda}|_{\lambda=\lambda_*}\in L_W^2([a,b],\mathbb{C}^d)$. If in addition ${\partial \tilde y\over \partial \lambda}|_{\lambda=\lambda_*}$ satisfies the boundary condition $\mathbf{A}$, we have
\begin{align}\label{generalized eigenfunction1}
(T_{(\pmb\omega,\mathbf{A})}-\lambda_*){\partial \tilde y\over \partial \lambda}|_{\lambda=\lambda_*}=\tilde y(\cdot,\lambda_*).
\end{align}
Now we show that ${\partial \tilde y\over \partial \lambda}|_{\lambda=\lambda_*}$ satisfies  $\mathbf{A}$.
Let $\tilde \psi=\sum\limits_{i=1}^pc_{i}\varphi_{i}^{(1)}+\sum\limits_{i=p+1}^{2d}c_{i}\varphi_{i}.$
It follows from (\ref{Contradiction-for-multiplicity-eigenvalue1}) that
\begin{align}\label{contradiction-boundary-tilde-psi-1}
\tilde A \begin{pmatrix}\tilde \psi(a)\\P\tilde \psi'(a)\end{pmatrix}+\tilde B \begin{pmatrix}  \tilde \psi(b)\\P\tilde \psi'(b)\end{pmatrix}=0.
\end{align}
Note that ${\partial y_i\over \partial \lambda}|_{\lambda=\lambda_*}=\varphi_i^{(1)},1\leq i\leq p$, and $y_i(\cdot,\lambda_*)=\varphi_i$,
$p<i\leq 2d$.
Thus
$${\partial \tilde y\over \partial \lambda}|_{\lambda=\lambda_*}=\sum\limits_{i=1}^pc_{i}\varphi^{(1)}_i+\sum\limits_{i=p+1}^{2d}c_{i} \varphi_{i}=\tilde \psi.$$
(\ref{contradiction-boundary-tilde-psi-1}) implies that ${\partial \tilde y\over \partial \lambda}|_{\lambda=\lambda_*}$ satisfies $\mathbf{A}$.
By (\ref{generalized eigenfunction1}), ${\partial \tilde y\over \partial \lambda}|_{\lambda=\lambda_*}$ is a generalized eigenfunction of $\lambda_*$, which contradicts the fact that the geometric and algebraic multiplicities of $\lambda_*$ are equal.

Therefore, $\tilde \Gamma_{(\pmb\omega, \mathbf{A})}(\lambda_*)\neq0$ in any case and $\kappa=p$.
This completes the proof.
\end{proof}
\begin{remark}
Here our proof  is independent of the condition that $P$ is positive.
Thus Theorem \ref{ equivalence of three multiplicities of an eigenvalue} also holds true when $P$ is invertible and  non-positive a.e. on $[a,b]$.
\end{remark}
\section{Continuity of the $n$-th eigenvalue}\label{Continuity of the $n$-th eigenvalue in 1-dimensional case}

In this section, we prove that the $n$-th eigenvalue is continuously dependent  on the Sturm-Liouville equations and  boundary conditions
when restricted into the $k$-th layer, where $0\leq k\leq2d$.

\begin{theorem}\label{omegatimesboundary condition continuity}
 The $n$-th eigenvalue is continuous on $\Omega\times \Sigma_k^\mathbb{C}$ for each $n\geq 1$, where $0\leq k\leq 2d$.
\end{theorem}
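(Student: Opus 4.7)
The plan is to reduce the continuity statement to a local lower bound on $\lambda_1$ and then invoke Lemma \ref{continuity principle}. Since continuity is a pointwise property, it suffices to show that every $(\pmb\omega_0,\mathbf{A}_0)\in\Omega\times\Sigma_k^{\mathbb{C}}$ admits an open neighborhood $U$ in $\Omega\times\Sigma_k^{\mathbb{C}}$ on which $\lambda_1$ is bounded from below; Lemma \ref{continuity principle} applied with $\mathcal{O}=U$ then yields continuity of every $\lambda_n$ on $U$, and hence at $(\pmb\omega_0,\mathbf{A}_0)$.

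To pin down the local picture in the boundary-condition variable, I would use Theorem \ref{structure of space of boundary conditions} to pick a chart $\mathcal{O}_K^{\mathbb{C}}$ with $\mathbf{A}_0\in\mathcal{O}_K^{\mathbb{C}}$. A direct computation from the block form of $B$ in \eqref{coordinate} gives the natural identification $\ker B\cong\ker S_K(\mathbf{A})$ for any $\mathbf{A}\in\mathcal{O}_K^{\mathbb{C}}$, so $n^0(B)=n^0(S_K(\mathbf{A}))$ and
\[
\Sigma_k^{\mathbb{C}}\cap\mathcal{O}_K^{\mathbb{C}}=\bigcup_{n^++n^-=\sharp(K)-k}J^{(k,n^+,n^-)}_{\mathcal{O}_K^{\mathbb{C}}}.
\]
Writing $\mathbf{A}_0\in J^{(k,n^+_0,n^-_0)}_{\mathcal{O}_K^{\mathbb{C}}}$, the continuity of the Hermitian eigenvalues of $S_K(\cdot)$ together with the defining constraint $n^0(S_K(\mathbf{A}))\equiv k$ on $\Sigma_k^{\mathbb{C}}\cap\mathcal{O}_K^{\mathbb{C}}$ force any sufficiently small neighborhood $V$ of $\mathbf{A}_0$ in $\Sigma_k^{\mathbb{C}}\cap\mathcal{O}_K^{\mathbb{C}}$ to stay inside the single stratum $J^{(k,n^+_0,n^-_0)}_{\mathcal{O}_K^{\mathbb{C}}}$; on $V$ the Hermitian matrix $S_K(\mathbf{A})$ has constant signature and bounded norm.

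The heart of the argument is the local lower bound on $\lambda_1$ on $U=U_0\times V$, with $U_0$ a small neighborhood of $\pmb\omega_0$ in $\Omega$. I would argue by contradiction: if $\lambda_1(\pmb\omega_m,\mathbf{A}_m)\to-\infty$ along a sequence in $U$, the locally continuous eigenvalue branches of Lemmas \ref{continuous eigenvalue branch}--\ref{continuous choice of eigenfunctions} would force an eigenvalue branch escaping to $-\infty$ not anchored at any eigenvalue of $(\pmb\omega_0,\mathbf{A}_0)$. Read in the chart coordinates, each escaping branch must be accompanied by a strict drop $n^+(S_K(\mathbf{A}_m))<n^+_0$ (equivalently a strict rise $n^0(S_K(\mathbf{A}_m))>k$), which is the quantitative escape mechanism visible already in the 1-dimensional refinement Proposition \ref{1dimrefinement}(iib). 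Since $V\subset J^{(k,n^+_0,n^-_0)}_{\mathcal{O}_K^{\mathbb{C}}}$, however, $n^+(S_K(\mathbf{A}_m))\equiv n^+_0$, giving the required contradiction.

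The principal obstacle is making the escape-counting statement rigorous in the high-dimensional coupled setting, where neither the Pr\"ufer transformation nor the inequality tools of \cite{Eastham1999,Everitt1997,Kong1999} apply directly. The chart structure \eqref{coordinate} is essential here: it converts the infinite-dimensional divergence question into a finite-dimensional spectral question about $S_K(\mathbf{A})$, and Proposition \ref{1dimrefinement}(iib) together with a decoupling along the eigendirections of $S_K(\mathbf{A})$ supplies the count. Once the local lower bound on $\lambda_1$ is in hand, Lemma \ref{continuity principle} immediately delivers continuity of every $\lambda_n$ on $\Omega\times\Sigma_k^{\mathbb{C}}$.
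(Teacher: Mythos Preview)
Your reduction to a local lower bound on $\lambda_1$ via Lemma \ref{continuity principle} is exactly right, and it matches the paper's first move. From that point on, however, your argument diverges from the paper's and has a genuine gap.

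The paper does \emph{not} argue by contradiction. It proves the uniform lower bound on $\lambda_1$ directly, by estimating the quadratic form $\langle T_{(\pmb\omega,\mathbf{A})}y,y\rangle_W$. After integrating by parts, the only dangerous term is the boundary contribution
\[
\left(\begin{pmatrix}Py'(a)\\Py'(b)\end{pmatrix},\begin{pmatrix}-y(a)\\y(b)\end{pmatrix}\right)_{2d}.
\]
The key observation is that on $\Sigma_k^{\mathbb C}$ the rank of $B$ is identically $2d-k$, so one can perform a locally continuous row reduction and $QR$ factorization of $(A\;|\;B)$ into a block $\begin{pmatrix}C_1(\mathbf{A})&D_1(\mathbf{A})\\ C_2(\mathbf{A})&0\end{pmatrix}$ with $D_1D_1^*=I$ and $C_2C_2^*=I$. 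The boundary term then reduces to $-\left(C_1(\mathbf{A})\binom{-y(a)}{y(b)},D_1(\mathbf{A})\binom{-y(a)}{y(b)}\right)$, which is bounded by a constant (depending continuously on $\mathbf{A}$) times $\|y\|_{C^0}^2$. A Sobolev interpolation $\|y\|_{C^0}^2\le\varepsilon_1\|y'\|_{L^2}^2+c(\varepsilon_1)\|y\|_{L^2}^2$ absorbs this into the positive $\int(Py',y')$ term, yielding the desired $\langle T_{(\pmb\omega,\mathbf{A})}y,y\rangle_W\ge\mu\|y\|_W^2$ uniformly on a neighborhood. No chart $\mathcal O_K^{\mathbb C}$ or submatrix $S_K$ appears in this proof.

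Your proposed contradiction route hinges on the assertion that an eigenvalue escaping to $-\infty$ ``must be accompanied by a strict drop $n^+(S_K(\mathbf{A}_m))<n_0^+$.'' That statement is precisely the content of Theorem \ref{discontinuity-main-theorem}(ii) in the forward direction, which the paper proves \emph{after} and \emph{using} Theorem \ref{omegatimesboundary condition continuity} (it is invoked repeatedly in Lemmas \ref{asymptotic behavoir for tilde A0}--\ref{asymptotic behavior C0 any sturm liouville equation}). Your attempt to avoid circularity by citing only the $1$-dimensional Proposition \ref{1dimrefinement} plus a ``decoupling along the eigendirections of $S_K(\mathbf{A})$'' does not work: diagonalizing $S_K(\mathbf{A})$ acts only on the boundary data, while the coefficients $P,Q,W$ of the equation remain fully coupled, so the problem does not split into $d$ scalar Sturm--Liouville problems to which Proposition \ref{1dimrefinement} could be applied. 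This is exactly the obstruction the paper flags in the introduction (``the strategy based on the Pr\"ufer transformation does not work \ldots the inequalities argument used in \cite{Kong1999} also becomes invalid''), and it is why a direct analytic estimate on the quadratic form is used instead.
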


\begin{proof} Let $(\pmb\omega_1, \mathbf{A}_1)\in\Omega\times\Sigma_k^\mathbb{C}$, where  $\pmb\omega_1=(P_1,Q_1,W_1)$. By Lemma \ref{continuity principle}, it suffices to show that
there exists a neighborhood $U_1$ of $(\pmb\omega_1, \mathbf{A}_1)$ in $\Omega\times\Sigma_k^\mathbb{C}$ such that the first eigenvalue $\lambda_1$ is bounded from below on $U_1$.
It is equivalent to show that there exists $\mu\in\mathbb{R}$ such that
\begin{align*}
\langle T_{(\pmb\omega,\mathbf{A})}y,y\rangle_W\geq\mu\| y\|^2_W,\;\;\forall \;y\in D_{\mathbf{A}},
\end{align*}
uniformly for $(\pmb\omega,\mathbf{\mathbf{A}})\in U_1$, where $T_{(\pmb\omega,\mathbf{A})}$ and $D_{\mathbf{A}}$ are defined in  (\ref{Sturm-Liouville operator}) and (\ref{domain}).

Firstly, let
$U_1$ be chosen sufficiently small  that for any $(\pmb\omega,\mathbf{A})\in U_1$   with $\pmb\omega=(P, Q, W)$,
there exist  $\delta_1,\mu_1>0$ satisfying  $\|\pmb\omega-\pmb\omega_1\|_{L^\infty\times L^\infty\times L^\infty}<\delta_1$ and
  \begin{align}\label{PtWt}
  P(t)\geq\mu_1,\;\;W(t)\geq\mu_1, \; a.e.\;{\rm on}\; t\in[a,b].
  \end{align}
Direct computations show that for any $(\pmb\omega,\mathbf{A})\in U_1$ with $\pmb\omega=(P,Q,W)$ and any $y\in D_{\mathbf{A}}$,
\begin{align}\label{sequiform}
\langle T_{(\pmb\omega,\mathbf{A})}y,y\rangle_W&=\int_a^b(-(Py')'+Qy,y)_ddt\\\nonumber
&=\int_a^b(Py',y')_d+(Qy,y)_ddt-\left(\begin{pmatrix}Py'(a)\\Py'(b)\end{pmatrix},\begin{pmatrix}-y(a)\\y(b)\end{pmatrix}\right)_{2d},
\end{align}
where $(\cdot,\cdot)_d$ is the usual inner product in $\mathbb{C}^d$.

Let $\mathbf{A}=[A\;|\;B]\in \Sigma_k^\mathbb{C}$.
Choose  $T_0\in GL(2d,\mathbb{C})$ such that
$$T_0(A\;|\;B)=\begin{pmatrix}A_1&B_1\\A_2&B_2\end{pmatrix},$$
where $A_1,B_1\in\mathcal{M}_{(2d-k)\times 2d}$, $A_2,B_2\in\mathcal{M}_{k\times 2d}$ and $\mathrm{rank}B_1=\mathrm{rank}B=2d-k$.
Then exists the unique $E\in \mathcal{M}_{k\times(2d-k)}$ such that $B_2=EB_1$, and $E$ is locally  continuously dependent  on $B$. Direct
computation shows that
$$\begin{pmatrix}I_{2d-k}&0\\-E&I_k\end{pmatrix}\begin{pmatrix}A_1&B_1\\A_2&B_2\end{pmatrix}=\begin{pmatrix}A_1&B_1\\A_2-EA_1&0\end{pmatrix}.$$
Applying the $QR$  decomposition (see Theorem 2.1.14 in \cite{Horn2013}) on $B_1$ and $A_2-EA_1$, we get
$$B_1=L_1D_1,\ A_2-EA_1=L_2C_2,$$
where $L_1\in GL(2d-k,\mathbb{C})$ and  $L_2\in GL(k,\mathbb{C})$ are lower triangular  positive matrices, and $D_1\in\mathcal{M}_{(2d-k)\times 2d}$ and $C_2\in\mathcal{M}_{k\times 2d}$ satisfy $D_1D_1^*=I_{2d-k}$, $C_2C_2^*=I_k$. Let $C_1=L_1^{-1}A_1$. Note that $L_i,  C_i$ and $D_1$, $i=1,2$,  are uniquely determined and locally continuously dependent on $\mathbf{A}$. Thus we denote them by $L_i(\mathbf{A}),  C_i(\mathbf{A})$ and $D_1(\mathbf{A})$.  Direct calculation gives
$$\begin{pmatrix}L(\mathbf{A})^{-1}_1&0\\0&L(\mathbf{A})^{-1}_2\end{pmatrix}\begin{pmatrix}A_1&B_1\\A_2-EA_1&0\end{pmatrix}
=\begin{pmatrix}C_1(\mathbf{A})&D_1(\mathbf{A})\\C_2(\mathbf{A})&0\end{pmatrix}.$$
Since
$\begin{pmatrix}C_1(\mathbf{A})\\C_2(\mathbf{A})\end{pmatrix}\begin{pmatrix}D_1(\mathbf{A})^*&0\end{pmatrix}=
\begin{pmatrix}D_1(\mathbf{A})\\0\end{pmatrix}\begin{pmatrix}C_1(\mathbf{A})^*&C_2(\mathbf{A})^*\end{pmatrix}$,
we have
$$C_2(\mathbf{A})D_1(\mathbf{A})^*=0\ {\rm and}\ C_1(\mathbf{A})D_1(\mathbf{A})^*=D_1(\mathbf{A})C_1(\mathbf{A})^*.$$
Thus $E_1(\mathbf{A})E_1(\mathbf{A})^*=I_{2d}$,
which implies that $E_1(\mathbf{A})$ is unitary, where $E_1(\mathbf{A}):=\begin{pmatrix}C_2(\mathbf{A})\\D_1(\mathbf{A})\end{pmatrix}$.
It follows from the boundary condition  that
$$C_1(\mathbf{A})\begin{pmatrix}-y(a)\\y(b)\end{pmatrix}+D_1(\mathbf{A})\begin{pmatrix}Py'(a)\\Py'(b)\end{pmatrix}=0,\ C_2(\mathbf{A})\begin{pmatrix}-y(a)\\y(b)\end{pmatrix}=0.$$
Then
\begin{align}\label{second term estimate}
&\left(\begin{pmatrix}Py'(a)\\Py'(b)\end{pmatrix},\begin{pmatrix}-y(a)\\y(b)\end{pmatrix}\right)_{2d}\\
\nonumber
=&\left(E_1(\mathbf{A})\begin{pmatrix}Py'(a)\\Py'(b)\end{pmatrix},E_1(\mathbf{A})\begin{pmatrix}-y(a)\\y(b)\end{pmatrix}\right)_{2d}\\\nonumber
=&\left(D_1(\mathbf{A})\begin{pmatrix}Py'(a)\\Py'(b)\end{pmatrix},D_1(\mathbf{A})\begin{pmatrix}-y(a)\\y(b)\end{pmatrix}\right)_{2d}\\\nonumber
=&\left(-C_1(\mathbf{A})\begin{pmatrix}-y(a)\\y(b)\end{pmatrix},D_1(\mathbf{A})\begin{pmatrix}-y(a)\\y(b)\end{pmatrix}\right)_{2d}\\\nonumber
\leq& c\|C_1(\mathbf{A})\|_{\mathcal{M}_{(2d-k)\times 2d}}\|D_1(\mathbf{A})\|_{\mathcal{M}_{(2d-k)\times 2d}}(|y(a)|_{d}^2+|y(b)|_{d}^2)\\\nonumber
\leq& c\|C_1(\mathbf{A})\|_{\mathcal{M}_{(2d-k)\times 2d}}\|D_1(\mathbf{A})\|_{\mathcal{M}_{(2d-k)\times 2d}}\|y\|_{C^0}^2,
\end{align}
where $\|C\|_{\mathcal{M}_{(2d-k)\times 2d}}=\max_{ij}\{|c_{ij}|\}$ and $\|y\|_{C^0}=\max_{t\in [a,b]}\{|y(t)|_{d}\}$.
 Here and in the sequel, $c$ denotes a generic positive constant and $c(\alpha)$ denotes such a constant depending only on $\alpha$. We now interpolate $\|y\|_{C^0}$ between the norms $\|y\|_{L^2}$ and $\|y'\|_{L^2}$. Choose $\varepsilon_1>0$ sufficiently small and $c(\varepsilon_1)>0$  sufficiently large  such that
\begin{align}\label{varepsilon chosen}
c\|C_1(\mathbf{A}_1)\|_{\mathcal{M}_{(2d-k)\times 2d}}\|D_1(\mathbf{A}_1)\|_{\mathcal{M}_{(2d-k)\times 2d}}\varepsilon_1<\mu_1
\end{align}
and
\begin{align}\label{sobolev interpolate}
 \|y\|^2_{C^0}\leq \varepsilon_1\|y'\|^2_{L^2}+c(\varepsilon_1)\|y\|^2_{L^2}.
 \end{align}
 It follows from (\ref{varepsilon chosen}) and the locally continuity of $C_1(\mathbf{A})$ and $D_1(\mathbf{A})$ on $\mathbf{A}$   that
 $U_1$ can be shrunk such that for any $(\pmb\omega,\mathbf{A})\in U_1$,
 \begin{align}\label{varepsilon chosen1}
c\|C_1(\mathbf{A})\|_{\mathcal{M}_{(2d-k)\times 2d}}\|D_1(\mathbf{A})\|_{\mathcal{M}_{(2d-k)\times 2d}}\varepsilon_1<\mu_1.
\end{align}
By (\ref{PtWt})--(\ref{second term estimate}), (\ref{sobolev interpolate})--(\ref{varepsilon chosen1}) and noting that $\|\cdot\|_{L^2}$ is equivalent to $\|\cdot\|_{L_W^2}$, we have
\begin{align*}
&\langle T_{(\pmb\omega,\mathbf{A})}y,y\rangle_W\\
\geq&\mu_1\|y'\|^2_{L^2}-(\|Q_1\|_{L^\infty}+\delta_1)\|y\|^2_{L^2}
-c\|C_1(\mathbf{A})\|_{\mathcal{M}_{(2d-k)\times 2d}}\|D_1(\mathbf{A})\|_{\mathcal{M}_{(2d-k)\times 2d}}\cdot\\
&\varepsilon_1\|y'\|^2_{L^2}-c\|C_1(\mathbf{A})\|_{\mathcal{M}_{(2d-k)\times 2d}}\|D_1(\mathbf{A})\|_{\mathcal{M}_{(2d-k)\times 2d}}c(\varepsilon_1)\|y\|^2_{L^2}\\
\geq&\mu\|y\|^2_{L_W^2}
\end{align*}
for some $\mu\in\mathbb{R}$ and for all $(\pmb\omega,\mathbf{A})\in U_1$. The proof is complete.
\end{proof}

\section{Singularity of the $n$-th eigenvalue}\label{Singularity of the $n$-th eigenvalue of  high dimensional Sturm-Liouville problems}

In this section, we determine the singular set in the space of boundary conditions for  $d$-dimensional Sturm-Liouville problems, and  give complete characterization of  asymptotic behavior of the $n$-th eigenvalue  near any fixed singular  boundary condition.

\begin{theorem}\label{discontinuity-main-theorem}
 Consider the restriction of $\lambda_n$ to $\mathcal{O}_K^{\mathbb{C}}$ for each $n\geq 1$, where $K\subset\{1,\cdots,2d\}$.
\begin{itemize}\label{main theorem}
\item[{\rm (i)}] The restriction of $\lambda_n$ to $\mathcal{O}_\emptyset^{\mathbb{C}}$ is continuous.
\item[{\rm (ii)}] Let $K$ be  nonempty, $0\leq n^0<n^0_0\leq \sharp(K)$, $n^+\geq n_0^+$ and $n^-\geq n_0^-$. Then
for any $\mathbf{A}\in J^{(n_0^0,n_0^+,n_0^-)}_{\mathcal{O}_{K}^{\mathbb{C}}}$, we have
\begin{align*}
\lim\limits_{J^{(n^0,n^+,n^-)}_{\mathcal{O}_{K}^{\mathbb{C}}}\ni\mathbf{B}\to\mathbf{A}}\lambda_n(\mathbf{B})&=-\infty,\;n\leq n^+-n_0^+,\\
\lim\limits_{J^{(n^0,n^+,n^-)}_{\mathcal{O}_{K}^{\mathbb{C}}}\ni\mathbf{B}\to\mathbf{A}}\lambda_n(\mathbf{B})&=
\lambda_{n-(n^+-n_0^+)}(\mathbf{A}),\;n >n^+-n_0^+,
\end{align*}
where $J^{(n^0,n^+,n^-)}_{\mathcal{O}_{K}^{\mathbb{C}}}$ is defined in (\ref{def-Jn-n0n+}).
\end{itemize}
 Consequently, $\Sigma^\mathbb{C}=\{\mathbf{A}=[A\;|\;B]:n^0(B)>0\}.$
\end{theorem}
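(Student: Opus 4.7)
The approach is a three-stage reduction to the one-dimensional refinement Proposition \ref{1dimrefinement}, mirroring the roadmap set out in the introduction. Part (i) is immediate: in the chart $\mathcal{O}_\emptyset^{\mathbb{C}}$ we have $B = I_{2d}$, hence $n^0(B) = 0$ and $\mathcal{O}_\emptyset^{\mathbb{C}} \subset \Sigma_0^{\mathbb{C}}$, so Theorem \ref{omegatimesboundary condition continuity} already delivers continuity. Note also the block-rank identity $n^0(B) = n^0(S_K(\mathbf{A}))$ valid in every chart $\mathcal{O}_K^{\mathbb{C}}$ (obtained by subtracting from $B$ the standard-basis columns indexed by $\{1,\ldots,2d\}\setminus K$ and reading off the remaining rank), so that $J^{(n^0_0,n^+_0,n^-_0)}_{\mathcal{O}_K^{\mathbb{C}}} \subset \Sigma_{n^0_0}^{\mathbb{C}}$; this is what lets us apply the continuity theorem on each stratum in part (ii).

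For part (ii) the three stages are as follows. \emph{Stage 1:} fix a maximally symmetric base point $(\pmb\omega^0, \mathbf{A}^0)$ where $\pmb\omega^0 = (P^0, Q^0, W^0)$ is scalar-diagonal and $S(\mathbf{A}^0)$ is block-diagonal with the prescribed signature on $S_K(\mathbf{A}^0)$. After permuting indices so $K$ decomposes as $K_1 \sqcup \cdots \sqcup K_d$ with each $K_j \subset \{2j-1, 2j\}$, the full $d$-dimensional Sturm-Liouville problem decouples into $d$ independent $1$-dimensional problems. Proposition \ref{1dimrefinement} applied coordinate-by-coordinate then counts \emph{exactly} $n^+ - n^+_0$ eigenvalues escaping to $-\infty$ for nearby $\mathbf{B}^0$ in the same block-diagonal form, while Lemma \ref{continuity-change-indices} pins down the limits of the remaining eigenvalues. \emph{Stage 2:} keep $\pmb\omega^0$ and vary $\mathbf{A}$ over the full stratum $J^{(n^0_0, n^+_0, n^-_0)}_{\mathcal{O}_K^{\mathbb{C}}}$. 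Since Hermitian matrices of fixed signature form a path-connected smooth submanifold (via unitary congruence followed by a straight-line deformation of the eigenvalues), so does the stratum, and along any such path $\lambda_n$ depends continuously on the parameter by Theorem \ref{omegatimesboundary condition continuity}. Combining this with the locally uniform eigenvalue count of Lemma \ref{local continuity of eigenvalues} transports the Stage 1 blow-up statement to arbitrary $\mathbf{A}$ in the stratum. \emph{Stage 3:} keep $\mathbf{A}$ fixed and deform $\pmb\omega^0$ to an arbitrary $\pmb\omega \in \Omega$ by straight-line interpolation (Hypothesis 1 is preserved by convexity), then repeat the continuity-along-path argument in the $\pmb\omega$ variable.

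The hard part will be Stage 2. One must verify that exactly $n^+ - n^+_0$ eigenvalues escape to $-\infty$ regardless of the direction in which $\mathbf{B} \to \mathbf{A}$ inside the less singular stratum $J^{(n^0, n^+, n^-)}_{\mathcal{O}_K^{\mathbb{C}}}$; high-dimensional approach directions are far more diverse than in the $1$-dimensional case, and no Pr\"ufer transformation or simple eigenvalue inequality is available. The resolution is twofold: first, use Theorem \ref{omegatimesboundary condition continuity} on $\Sigma_{n^0_0}^{\mathbb{C}}$ together with Lemma \ref{local continuity of eigenvalues} to rule out escape of any eigenvalue beyond those accounted for in Stage 1; second, connect an arbitrary approach $\mathbf{B}_k \to \mathbf{A}$ to the block-diagonal approach of Stage 1 by a two-parameter homotopy in which the approach direction and the base point move simultaneously, invoking path-connectedness of both strata. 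Lemma \ref{continuity-change-indices} then upgrades the blow-up count to the pointwise limits $\lambda_n(\mathbf{B}) \to \lambda_{n - (n^+ - n^+_0)}(\mathbf{A})$. Finally, the identification $\Sigma^{\mathbb{C}} = \{[A \,|\, B] : n^0(B) > 0\}$ is immediate from the formula: discontinuity occurs iff some eigenvalue escapes along some approach, which by (ii) is equivalent to $n^+ > n^+_0$ for some neighbouring stratum, i.e.\ to $n^0_0 = n^0(B) > 0$.
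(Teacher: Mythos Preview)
Your plan is correct and follows essentially the same three-stage route as the paper (Lemmas \ref{one path from 1-dim}, \ref{asymptotic behavoir for tilde A0}--\ref{asymptotic behavior A0}, and \ref{asymptotic behavior C0 any sturm liouville equation}), with the same supporting tools (Theorem \ref{omegatimesboundary condition continuity}, Lemmas \ref{local continuity of eigenvalues} and \ref{continuity-change-indices}, and path-connectedness of the signature strata). The one technical point your plan leaves implicit is the \emph{local} path-connectedness of $U_\varepsilon \cap J^{(n^0,n^+,n^-)}_{\mathcal{O}_K^{\mathbb{C}}}$ near the base point (the paper's Lemma \ref{Uvarepsilon conponent}): your ``two-parameter homotopy'' must stay inside the neighborhood furnished by Lemma \ref{local continuity of eigenvalues} where the chosen level $r_1$ is never an eigenvalue, and for this the global path-connectedness of the strata that you invoke is not by itself sufficient.
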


\begin{proof} (i) holds due to Theorem \ref{omegatimesboundary condition continuity} and the fact $\mathcal{O}_{\emptyset}^\mathbb{C}\subset \Sigma_0^\mathbb{C}$.  We shall give the proof of Theorem \ref{main theorem} (ii) step by step  via  the following results in Lemmas \ref{one path from 1-dim}, \ref{asymptotic behavoir for tilde A0}, \ref{asymptotic behavior A0} and \ref{asymptotic behavior C0 any sturm liouville equation}.
\end{proof}

By Theorems \ref{omegatimesboundary condition continuity}  and  \ref{discontinuity-main-theorem}, we give the explicit description of $U^i$ in (\ref{U-i1}) and (\ref{U-i2}):
\begin{corollary}\label{corollary-u-i}
For each $\mathbf{A}\in J^{(n_0^0,n_0^+,n_0^-)}_{\mathcal{O}_{K}^{\mathbb{C}}}$ and $0\leq i\leq n_0^0$, there exists a neighborhood $U$ of $\mathbf{A}$ in $\mathcal{O}_{K}^{\mathbb{C}}$ such that
$$U^i=\bigcup_{n^+-n_0^+=i,n^0\leq n^0_0,n^-\geq n^-_0}J^{(n^0,n^+,n^-)}_{\mathcal{O}_{K}^{\mathbb{C}}}\cap U,$$
where $U^i$ satisfies (\ref{U-i1}) and (\ref{U-i2}).
\end{corollary}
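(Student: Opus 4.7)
The plan is to isolate, in a sufficiently small neighborhood $U$ of $\mathbf{A}$ in $\mathcal{O}_K^\mathbb{C}$, exactly those strata of $\mathcal{O}_K^\mathbb{C}$ that can accumulate on $\mathbf{A}$, and then to read off from Theorem \ref{main theorem}(ii) how many eigenvalues drop to $-\infty$ from each such stratum. The key ingredient is that the map $\mathbf{B}\mapsto S_K(\mathbf{B})$ is continuous on $\mathcal{O}_K^\mathbb{C}$ and the eigenvalues of a Hermitian matrix depend continuously on its entries (Weyl's inequalities). Therefore the $n_0^+$ strictly positive and $n_0^-$ strictly negative eigenvalues of $S_K(\mathbf{A})$ persist under small perturbations, while only the $n_0^0$ zero eigenvalues can split into positive, negative, or zero eigenvalues.

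Consequently, by shrinking $U$ I can ensure that every $\mathbf{B}\in U$ satisfies $n^+(S_K(\mathbf{B}))\geq n_0^+$, $n^-(S_K(\mathbf{B}))\geq n_0^-$, and $n^0(S_K(\mathbf{B}))\leq n_0^0$. Thus $U$ decomposes as the disjoint union of the sets $J^{(n^0,n^+,n^-)}_{\mathcal{O}_K^\mathbb{C}}\cap U$ indexed by admissible triples satisfying these inequalities together with $n^0+n^++n^-=\sharp(K)$. Re-grouping this partition according to the value of $i:=n^+-n_0^+\in\{0,\ldots,n_0^0\}$ produces precisely the candidate set
$$\widetilde U^i:=\bigcup_{n^+-n_0^+=i,\,n^0\leq n_0^0,\,n^-\geq n_0^-}J^{(n^0,n^+,n^-)}_{\mathcal{O}_K^\mathbb{C}}\cap U$$
asserted in the statement.

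To finish, I would verify that $\widetilde U^i$ satisfies (\ref{U-i1}) and (\ref{U-i2}). For the strata with $n^0<n_0^0$ that contribute to $\widetilde U^i$, this is exactly Theorem \ref{main theorem}(ii) with the substitution $i=n^+-n_0^+$: the number of eigenvalues escaping to $-\infty$ is $i$, and the surviving ones re-index to $\lambda_{n-i}(\mathbf{A})$. The single stratum with $n^0=n_0^0$ forces $n^+=n_0^+$ and $n^-=n_0^-$, so it contributes only to $\widetilde U^0$; its points lie in the same layer $\Sigma_{n_0^0}^\mathbb{C}$ as $\mathbf{A}$, and Theorem \ref{omegatimesboundary condition continuity} yields the continuity $\lambda_n(\mathbf{B})\to\lambda_n(\mathbf{A})$, matching (\ref{U-i2}) with $i=0$ while (\ref{U-i1}) is vacuous. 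The one subtle point, which I expect to be the mildest obstacle, is that several strata may simultaneously feed into the same $\widetilde U^i$, so one must check that the joint limit along $\widetilde U^i$ is well defined; this is not a genuine difficulty because Theorem \ref{main theorem}(ii) produces the same limiting value on each component stratum (depending only on $i$), so the joint limit exists and equals that common value. This identifies $\widetilde U^i$ with $U^i$ and completes the argument.
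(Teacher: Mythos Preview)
Your proposal is correct and follows the same route the paper indicates: the corollary is stated immediately after Theorem~\ref{discontinuity-main-theorem} with only the remark that it follows from Theorems~\ref{omegatimesboundary condition continuity} and~\ref{discontinuity-main-theorem}, and you have simply written out the details of that deduction (the Weyl-inequality argument localizing the admissible strata near $\mathbf{A}$, the regrouping by $i=n^+-n_0^+$, and the separate treatment of the case $n^0=n_0^0$ via Theorem~\ref{omegatimesboundary condition continuity}). Your handling of the joint limit across several strata feeding into the same $\widetilde U^i$ is the only point the paper leaves entirely implicit, and your justification is the intended one.
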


Let $K\subset\{1,\cdots,2d\}$  be a nonempty subset below. For each $0\leq n^0\leq \sharp(K)$,
\begin{align*}
\mathcal{O}_K^{\mathbb{C}}\cap\Sigma_{n^0}^\mathbb{C}&\begin{cases} \neq\emptyset & {\rm if}\;\;  0\leq n^0\leq \sharp(K), \\
                      =\emptyset &  {\rm if}\;\; \sharp(K)< n^0\leq 2d, \end{cases}
\end{align*}
and furthermore, $n^0(S_K(\mathbf{A}))=n^0$ for any $\mathbf{A}\in\mathcal{O}_K^{\mathbb{C}}\cap\Sigma_{n^0}^\mathbb{C}$.  $\mathcal{O}_K^{\mathbb{C}}\cap\Sigma_{n^0}^\mathbb{C}$ possesses precisely $\sharp(K)-n^0+1$ components as follows:
\begin{align}\label{path connected components}
J^{(n^0,n^+,n^-)}_{\mathcal{O}_{K}^{\mathbb{C}}},\;0\leq n^+\leq \sharp(K)-n^0,\;n^-=\sharp(K)-n^0-n^+.
\end{align}

Next, we show that every $J^{(n^0,n^+,n^-)}_{\mathcal{O}_{K}^{\mathbb{C}}}$ is a path connected component.
In the following discussion, by a path $\gamma$ to connect $x_0$ and $x_1$  in a topological space $X$, we mean a continuous function $\gamma:[0,1]\to X$ such that $\gamma(0)=x_0$ and $\gamma(1)=x_1$.

\begin{lemma}\label{JC path connected}
 $J^{(n^0,n^+,n^-)}_{\mathcal{O}_{K}^{\mathbb{C}}}$ is path connected for each $0\leq n^0\leq \sharp(K)$ and   $0\leq n^+\leq \sharp(K)-n^0$.
\end{lemma}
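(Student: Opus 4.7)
The key observation is that the coordinate chart provided by (\ref{coordinate}) gives a homeomorphism between $\mathcal{O}_K^{\mathbb{C}}$ and the space $\mathrm{Herm}(2d,\mathbb{C})$ of $2d\times 2d$ Hermitian matrices via $\mathbf{A}\mapsto S(\mathbf{A})$. Under this identification, $J^{(n^0,n^+,n^-)}_{\mathcal{O}_K^{\mathbb{C}}}$ corresponds to the set
$$\mathcal{J} := \{S\in\mathrm{Herm}(2d,\mathbb{C}): S_K \text{ has signature } (n^+,n^0,n^-)\},$$
where $S_K$ is the principal submatrix indexed by $K$ defined in (\ref{def-SK}). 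So the plan is to show $\mathcal{J}$ is path connected using two simple deformations.

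The plan is to carry out the path construction in two stages. First, for any $S\in\mathcal{J}$, after permuting rows and columns so that indices in $K$ come first, write
$$S = \begin{pmatrix} S_K & C \\ C^* & D \end{pmatrix}.$$
The linear homotopy $t\mapsto \bigl(\begin{smallmatrix} S_K & (1-t)C \\ (1-t)C^* & (1-t)D \end{smallmatrix}\bigr)$, $t\in[0,1]$, leaves the $K$-principal block fixed, hence stays in $\mathcal{J}$ and connects $S$ to $\tilde{S}:=\bigl(\begin{smallmatrix} S_K & 0 \\ 0 & 0 \end{smallmatrix}\bigr)$. Therefore it suffices to path connect any two matrices of the form $\bigl(\begin{smallmatrix} H & 0 \\ 0 & 0 \end{smallmatrix}\bigr)$ with $H\in\mathrm{Herm}(\sharp(K),\mathbb{C})$ of signature $(n^+,n^0,n^-)$; equivalently, it suffices to show that
$$\mathcal{H}:=\{H\in\mathrm{Herm}(m_0,\mathbb{C}) : H \text{ has signature } (n^+,n^0,n^-)\}$$
is path connected, where $m_0=\sharp(K)$.

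For the second stage, let $D_0:=\mathrm{diag}(I_{n^+},0_{n^0},-I_{n^-})\in\mathrm{Herm}(m_0,\mathbb{C})$. By Sylvester's law of inertia, every $H\in\mathcal{H}$ is congruent to $D_0$, i.e. $H=U^*D_0 U$ for some $U\in GL(m_0,\mathbb{C})$. Since $GL(m_0,\mathbb{C})$ is path connected, given $H^{(0)},H^{(1)}\in\mathcal{H}$ with $H^{(i)}=(U^{(i)})^*D_0 U^{(i)}$, we choose a continuous path $U:[0,1]\to GL(m_0,\mathbb{C})$ with $U(i)=U^{(i)}$ and set $H(t):=U(t)^*D_0 U(t)$. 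Then $H(t)\in\mathcal{H}$ for all $t$ by Sylvester's law (signatures are preserved under congruence), and $H(0)=H^{(0)}$, $H(1)=H^{(1)}$. Concatenating this path with the two homotopies from the first stage yields a path in $\mathcal{J}$ connecting any two of its members, completing the proof.

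There is no serious obstacle: the first deformation is a trivial linear interpolation, and the second reduces to the classical fact that Hermitian matrices of fixed signature form a path-connected $GL(m_0,\mathbb{C})$-orbit under the congruence action $U\cdot H = U^*HU$. The only care needed is the initial identification of $J^{(n^0,n^+,n^-)}_{\mathcal{O}_K^{\mathbb{C}}}$ with $\mathcal{J}$, which is immediate from (\ref{coordinate}) and (\ref{def-SK}).
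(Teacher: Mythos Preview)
Your proof is correct and follows essentially the same approach as the paper: both arguments treat the off-$K$ entries by a linear interpolation (which is harmless since only $S_K$ enters the signature constraint) and handle the $S_K$ block via Sylvester's law together with the path connectedness of $GL(m_0,\mathbb{C})$ under the congruence action. The only cosmetic difference is that you carry out the two deformations sequentially (first killing the off-block entries, then moving $S_K$), whereas the paper performs them simultaneously by defining $s_{li}^{(\tau)}$ as a convex combination for $l\notin K$ or $i\notin K$ and as the congruence path $(\gamma_0(\tau))_{li}$ for $l,i\in K$.
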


\begin{proof}
Fix any given  $\mathbf{A}_j\in J^{(n^0,n^+,n^-)}_{\mathcal{O}_{K}^{\mathbb{C}}}, j=1,2$, and denote
\begin{align*}
\mathbf{A}_j=[A_j\;|\;B_j]=&[(a^{(j)}_1,a^{(j)}_2,\cdots,a^{(j)}_{2d})\;|\;(b^{(j)}_1,b^{(j)}_2,\cdots,b^{(j)}_{2d})],
\end{align*}
where
\begin{align*}
a_{i}^{(j)}=&\begin{cases} -e_i & {\rm if}\;\;  i\in K, \\
                      s_i^{(j)} &  {\rm if}\;\; i\in\{1,2,\cdots,2d\}\backslash K, \end{cases}\\\nonumber
 b_{i}^{(j)}=&\begin{cases}  s_i^{(j)} &{\rm if}\;\;  i\in K, \\
                      e_i &  {\rm if}\;\; i\in\{1,2,\cdots,2d\}\backslash K,\end{cases}
\end{align*}
 for two different $2d\times2d$ Hermitian matrices  $S^{(j)}=(s^{(j)}_1,\cdots, s^{(j)}_{2d})$, $s_i^{(j)}=(s_{1i}^{(j)},\cdots, s_{2di}^{(j)})^T$. Let  $m_0:=\sharp(K)$ and
 $S_K(\mathbf{A}_j)$ be defined as that in (\ref{def-SK}). Then there exist  matrices $R^{(j)}\in GL(m_0,\mathbb{C})$, $j=1,2,$ such that
 \begin{align}\label{Ajtransform}
 S_K(\mathbf{A}_j)=R^{(j)*}\hat JR^{(j)}, \hat J:=\begin{pmatrix}0_{n^0}&&\\
&I_{n^+}&\\
&&-I_{n^-}\end{pmatrix}.
 \end{align}
 Choose a path  of $m_0\times m_0$ matrices: $\gamma$ to connect  $R^{(1)}$ and $R^{(2)}$  such that $\gamma(\tau)\in GL(m_0,\mathbb{C})$, $\tau\in[0,1]$.
 Then $\gamma_0(\tau):= \gamma(\tau)^* \hat J \gamma(\tau)$
with entries $(\gamma_0(\tau))_{ij}$, $1\leq i,j\leq m_0$,  is a path to connect $S_K(\mathbf{A}_1)$ and $S_K(\mathbf{A}_2)$ such that $n^0(\gamma_0(\tau))=n^0$ and $n^{\pm}(\gamma_0(\tau))=n^{\pm}$, $\tau\in[0,1]$.
Define a Hermitian  $2d\times 2d$ matrix $S^{(\tau)}=(s^{(\tau)}_1,\cdots,s^{(\tau)}_{2d})$, $\tau\in[0,1]$, where $s_i^{(\tau)}=(s_{1i}^{(\tau)},\cdots,s_{2di}^{(\tau)})^T$ with entries
\begin{align*}
s_{li}^{(\tau)}:=\begin{cases}  (1-\tau)s_{li}^{(1)}+\tau s_{li}^{(2)} & \;\;\;{\rm if}\;\;  i\notin K\; {\rm or} \;l\notin K, \\
                      (\gamma_0(\tau))_{li} &  \;\;\;{\rm if}\;\; l,i\in K. \end{cases}
\end{align*}
 Thus we can construct a path $\xi$ in $J^{(n^0,n^+,n^-)}_{\mathcal{O}_{K}^{\mathbb{C}}}$ to connect  $\mathbf{A}_1$ and $\mathbf{A}_2$:
\begin{align*}
\xi{(\tau)}=[A^{(\tau)}\;|\;B^{(\tau)}]
=&[(a^{(\tau)}_1,a^{(\tau)}_2,\cdots,a^{(\tau)}_{2d})\;|\;(b^{(\tau)}_1,b^{(\tau)}_2,\cdots,b^{(\tau)}_{2d})],
\end{align*}
where
\begin{align*}
a_{i}^{(\tau)}=&\begin{cases} -e_i & {\rm if}\;\;  i\in K, \\
                      s_i^{(\tau)} &  {\rm if}\;\; i\in\{1,2,\cdots,2d\}\backslash K, \end{cases}\\\nonumber
 b_{i}^{(\tau)}=&\begin{cases}  s_i^{(\tau)} & {\rm if}\;\;  i\in K, \\
                      e_i & {\rm if}\;\; i\in\{1,2,\cdots,2d\}\backslash K.\end{cases}
\end{align*}
 This finishes the proof.
\end{proof}

\begin{remark}
The only difference in the proof of Lemma \ref{JC path connected}  for real boundary conditions is that
$R^{(j)}$ should be chosen such that $\det R^{(j)}>0$, $j=1,2$. This can be easily done, since otherwise, we can replace  $R^{(j)}$  by
  \begin{align*}
\begin{pmatrix}-1&\\
&I_{m_0-1}\end{pmatrix}R^{(j)}.
 \end{align*}
\end{remark}

\begin{lemma}\label{Uvarepsilon conponent}
Let  $0\leq n_0^0\leq \sharp(K)$ and $0\leq n_0^+ \leq \sharp(K)-n_0^0$.  Then for any ${\mathbf{A}}\in J^{(n_0^0,n_0^+,n_0^-)}_{\mathcal{O}_{K}^{\mathbb{C}}}$, there exists $\varepsilon_1>0$ such that for any $0<\varepsilon<\varepsilon_1$,
\begin{align}\label{Uvarepsilon}\nonumber
U_\varepsilon:=&\{\mathbf{B}\in\mathcal{O}_{K}^{\mathbb{C}}:\|S(\mathbf{B})-S(\mathbf{A})\|_{\mathcal{M}_{2d\times 2d}}<\varepsilon\}\\
=&\bigcup_{n^0\leq n_0^0,n^+\geq n_0^+,n^-\geq n_0^-}U_\varepsilon\cap J^{(n^0,n^+,n^-)}_{\mathcal{O}_{K}^{\mathbb{C}}},
\end{align}
and $U_\varepsilon\cap J^{(n^0,n^+,n^-)}_{\mathcal{O}_{K}^{\mathbb{C}}}$ is path connected for any $n^0\leq n_0^0$, $n^+\geq n_0^+$ and $n^-\geq n_0^-$.
\end{lemma}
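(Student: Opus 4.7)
The plan splits naturally into the decomposition \eqref{Uvarepsilon} and the path connectedness of each stratum piece. Throughout, set $m_0:=\sharp(K)$ and $M_0:=S_K(\mathbf{A})$.

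For \eqref{Uvarepsilon} I would invoke Weyl's inequality for Hermitian matrices. Since $S_K(\mathbf{B})$ is a submatrix of $S(\mathbf{B})$, the operator-norm distance $\|S_K(\mathbf{B})-M_0\|_{\mathrm{op}}$ is bounded by a fixed constant multiple of $\|S(\mathbf{B})-S(\mathbf{A})\|_{\mathcal{M}_{2d\times 2d}}$. Choosing $\varepsilon_1$ smaller than such a constant times the smallest absolute value among the nonzero eigenvalues of $M_0$, Weyl's estimate forces every $\mathbf{B}\in U_\varepsilon$ to have the $n_0^+$ positive eigenvalues of $M_0$ remaining positive, and the $n_0^-$ negative ones remaining negative, in $S_K(\mathbf{B})$. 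Hence $n^+(S_K(\mathbf{B}))\geq n_0^+$, $n^-(S_K(\mathbf{B}))\geq n_0^-$, and $n^0(S_K(\mathbf{B}))\leq n_0^0$, giving the nontrivial inclusion in \eqref{Uvarepsilon}; the reverse is tautological.

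For the path connectedness, fix $\mathbf{B}_1,\mathbf{B}_2\in U_\varepsilon\cap J^{(n^0,n^+,n^-)}_{\mathcal{O}_K^{\mathbb{C}}}$. First, I would linearly interpolate the non-$K\times K$ entries of $S(\mathbf{B}_j)$ down to those of $S(\mathbf{A})$: this keeps $S_K$ (hence the signature) constant and stays in $U_\varepsilon$ by convexity of the open ball, reducing the task to joining two Hermitian $m_0\times m_0$ matrices $M_j=S_K(\mathbf{B}_j)$ of signature $(n^0,n^+,n^-)$ inside $\|M-M_0\|<\varepsilon$ via a signature-preserving path. Next, conjugating by the unitary that diagonalises $M_0$, I may assume $M_0=\mathrm{diag}(D^-,0_{n_0^0},D^+)$ with $D^\pm$ of the asserted signs, and split each $M$ into kernel/non-kernel blocks as $M=\begin{pmatrix}M^{\perp\perp}&M^{\perp 0}\\ M^{0\perp}&M^{00}\end{pmatrix}$. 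For $\varepsilon$ small, $M^{\perp\perp}$ is invertible with signature $(0,n_0^+,n_0^-)$, so Sylvester's law of inertia forces the Schur complement $\tilde M^{00}:=M^{00}-M^{0\perp}(M^{\perp\perp})^{-1}M^{\perp 0}$ to lie in the signature-$(n^0,n^+-n_0^+,n^--n_0^-)$ stratum $\mathcal{S}'\subset\mathrm{Herm}(n_0^0)$. The map $M\mapsto (M^{\perp\perp},M^{\perp 0},\tilde M^{00})$ is then a homeomorphism onto a product whose first two factors are open convex balls; those are handled by linear interpolation, and the task reduces to joining $\tilde M_1^{00},\tilde M_2^{00}\in\mathcal{S}'$ by a path inside a prescribed small ball about $0$.

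The core step is exactly this reduced one, and is where I would expect the main obstacle: the ball constraint prevents using an arbitrary path in $\mathcal{S}'$. The key is that $\mathcal{S}'$ is invariant under multiplication by positive scalars, and (by the same Sylvester-plus-connectedness-of-$GL(n_0^0,\mathbb{C})$ argument used in Lemma \ref{JC path connected}) globally path connected. I would first scale each $\tilde M_j^{00}$ down via $\tau\mapsto (1-\tau(1-s))\tilde M_j^{00}$ to $s\tilde M_j^{00}$ for $s>0$ arbitrarily small, staying in $\mathcal{S}'\cap B_\varepsilon(0)$ by scale-invariance. Writing $s\tilde M_j^{00}=R_j^*\hat J R_j$ with $\hat J=\mathrm{diag}(0_{n^0},I_{n^+-n_0^+},-I_{n^--n_0^-})$ and $R_j\in GL(n_0^0,\mathbb{C})$ of norm $O(\sqrt{s})$, I then join $R_1$ and $R_2$ by any path $\alpha(\tau)$ in the connected Lie group $GL(n_0^0,\mathbb{C})$ whose image can be rescaled to norm $O(\sqrt{s})$; the resulting $\gamma(\tau):=\alpha(\tau)^*\hat J\alpha(\tau)$ is a path in $\mathcal{S}'$ of supremum norm $O(s)<\varepsilon$. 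Reassembling via the Schur-complement parameterisation yields the required path in $U_\varepsilon\cap J^{(n^0,n^+,n^-)}_{\mathcal{O}_K^{\mathbb{C}}}$.
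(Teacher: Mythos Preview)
Your proposal is correct and follows essentially the same route as the paper: eigenvalue perturbation for \eqref{Uvarepsilon}, reduction to the $S_K$-block, a Schur-complement/block-congruence step to peel off the invertible part of $S_K(\mathbf A)$, and finally scale-invariance of the signature stratum plus connectedness of $GL(n_0^0,\mathbb C)$ to connect the small kernel blocks inside the ball. One minor caveat: your assertion that the Schur-complement map sends the $\varepsilon$-ball onto a \emph{product} is not literally true (the admissible range of $\tilde M^{00}$ depends on $(M^{\perp\perp},M^{\perp 0})$ through the constraint $\|M^{00}\|<\varepsilon$), but this only introduces an $O(\varepsilon^2)$ discrepancy and is harmless; the paper sidesteps this by writing the block reduction as an explicit congruence path $\gamma(\tau)=F_\tau^* E F_\tau$, and for the final step normalises a global stratum path via $\tau\mapsto\frac{\|F\|}{\|\gamma_1(\tau)\|}\gamma_1(\tau)$ rather than pre-scaling both endpoints as you do --- an equivalent manoeuvre.
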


\begin{proof} Let ${\mathbf{A}}=[A\;|\;B]$ be given in (\ref{coordinate}). Then there exists a $m_0\times m_0$ unitary matrix $N$ such that
 \begin{align}\label{Atransform}
S_K(\mathbf{A})=N^*\begin{pmatrix}0_{n_0^0}&\\
&M\end{pmatrix}N,\;\;&M=\begin{pmatrix}
\mu_1&&&&&\\&\ddots&&&&\\&&\mu_{n_0^+}&&&\\
&&&\nu_1&&\\&&&&\ddots&\\&&&&&\nu_{n_0^-}\end{pmatrix},
 \end{align}
 where  $m_0=\sharp(K)$ and $\nu_1\leq\cdots\leq\nu_{n_0^-}<0<\mu_1\leq\cdots\leq\mu_{n_0^+}$.
(\ref{Uvarepsilon}) is straightforward  from the small perturbation of $S_K(\mathbf{A})$ in  (\ref{Atransform}).
Fix any $\mathbf{B}_i\in U_\varepsilon\cap J^{(n^0,n^+,n^-)}_{\mathcal{O}_{K}^{\mathbb{C}}}, i=1,2.$ Their entries are given by a similar way as  (\ref{coordinate}). Then
the connection from $s_{lj}(\mathbf{B}_1)$ to $s_{lj}(\mathbf{B}_2)$ is trivial if  $l\notin K$ or $j\notin K$. So it suffices to construct a path connecting $S_K(\mathbf{B}_1)$ and $S_K(\mathbf{B}_2)$. To do so, we only need to show that
\begin{align*}
\mathcal{E}:=\{E: \|E-E_0\|_{\mathcal{M}_{m_0\times m_0}}<\varepsilon_0,E=E^*,n^\pm(E)=n^\pm\}
\end{align*}
is path connected for $\varepsilon_0>0$ sufficiently small, where $E_0:=\begin{pmatrix}0_{n_0^0}&\\
&M\end{pmatrix}$.
Let $E=\begin{pmatrix}E_{11}&E^*_{12}\\
E_{12}&M+E_{22}\end{pmatrix}\in\mathcal{E}$ and define $F_{\tau}=\begin{pmatrix}1&0\\
-\tau(M+E_{22})^{-1}E_{12}&1\end{pmatrix}$, where $E_{ii}=E_{ii}^*$, $i=1,2$, and $\tau\in[0,1]$.
Then $\gamma(\tau)=F^*_{\tau}EF_{\tau}$ is a path in $\mathcal{E}$ from $E$ to
$\begin{pmatrix}F&0\\
0&M+E_{22}\end{pmatrix}$, where $F=E_{11}-E_{12}^*(M+E_{22})^{-1}E_{12}$. Similarly, for another $\tilde E=\begin{pmatrix}\tilde E_{11}&\tilde E^*_{12}\\
\tilde E_{12}&M+\tilde E_{22}\end{pmatrix}\in \mathcal{E}$,
one can construct a path $\tilde\gamma$ in  $\mathcal{E}$ to connect $\tilde E$ to
$\begin{pmatrix}\tilde F&0\\
0&M+\tilde E_{22}\end{pmatrix}$. Now, we  connect $\begin{pmatrix}F&0\\
0&M+E_{22}\end{pmatrix}$ and $\begin{pmatrix}\tilde F&0\\
0&M+\tilde E_{22}\end{pmatrix}$ in $\mathcal{E}$.   $M+(1-\tau)E_{22}+\tau\tilde E_{22}$ is a path from $M+E_{22}$ to $M+\tilde E_{22}$. The rest is to connect $F$ and $\tilde F$.
Note that  $n^{\pm}(F)=n^{\pm}(\tilde F)=n^{\pm}-n_0^{\pm}$. Thus there exists a path $ \gamma_1$ to connect
 $F$ and $\tilde F$  by a similar strategy used in (\ref{Ajtransform}) such that  $n^{\pm}(\gamma_1(\tau))=n^{\pm}-n_0^{\pm}$ for each $\tau\in[0,1]$. However, $\|\gamma_1(\tau)\|$ maybe larger than $\varepsilon_0$, where   $\|\cdot\|:=\|\cdot\|_{\mathcal{M}_{n_0^0\times n_0^0}}$ for convenience. Thus we need to shrink the path $\gamma_1$ as follows. Assume that $\|\tilde F\|>\|F\|$, otherwise the construction is similar.  Connect $F$ and  ${\|F\|\over \|\tilde F\|}\tilde F$ by ${\|F\|\over \|\gamma_1(\tau)\|}\gamma_1(\tau)$,  and then connect  ${\|F\|\over \|\tilde F\|}\tilde F$ and $\tilde F$ by ${(1-\tau)\|F\|+\tau \|\tilde F\|\over \|\tilde F\|}\tilde F$, $\tau\in[0,1]$.  This completes  the proof.
\end{proof}

{\it In the following discussion, we always assume that  $0< n_0^0\leq \sharp(K)$, $0\leq n_0^+ \leq \sharp(K)-n_0^0$,   $n^0<n_0^0$, $n^+\geq n^+_0$ and $n^-\geq n^-_0$.}\medskip

 Set $K=\{k_1,\cdots,k_{m_0}\}$ and define
\begin{align}\label{independent boundary condition}
\tilde{\mathbf{A}}_0:=[A_{(n_0^0,n_0^+)}\;|\;B_{(n_0^0,n_0^+)}]=&[-I_{2d}\;|\;(b_1,b_2,\cdots,b_{2d})]\in J^{(n_0^0,n_0^+,n_0^-)}_{\mathcal{O}_{K}^{\mathbb{C}}},
\end{align}
where
\begin{align*}
 b_{l}=&\begin{cases}  0 & \;\;\;{\rm if}\;\; l \in \{k_1,\cdots,k_{n_0^0}\}, \\
                      e_l &  \;\;\;{\rm if}\;\; l\in\{k_{n_0^0+1},\cdots,k_{n_0^0+n_0^+}\}\cup(\{1,\cdots,2d\}\backslash K),\\
                      -e_l &  \;\;\;{\rm if}\;\; l\in\{k_{n_0^0+n_0^++1},\cdots,k_{m_0}\}.\end{cases}
\end{align*}

Consider
\begin{align}\label{independent equation}
P_0:=\begin{pmatrix}p_{11}&&\\
&\ddots&\\
&&p_{dd}\end{pmatrix},
\end{align}
 $Q_0$ and $W_0$ are defined similarly as (\ref{independent equation}), where $p_{ii}, q_{ii}, w_{ii}\in L^{\infty}([a,b],\mathbb{R})$, and $p_{ii}, w_{ii}>0$ a.e. on $[a,b]$   for each $1\leq i\leq d$.
We get the following result from $1$-dimensional Sturm-Liouville problems:

\begin{lemma}\label{one path from 1-dim}
Consider the Sturm-Liouville equation  $\pmb\omega_0=(P_0,Q_0,W_0)$ defined in (\ref{independent equation}).  Then for $\tilde{\mathbf{A}}_0\in J^{(n_0^0,n_0^+,n_0^-)}_{\mathcal{O}_{K}^{\mathbb{C}}}$ defined in (\ref{independent boundary condition}), there exists a path $\tilde{\mathbf{A}}_s\in J^{(n^0,n^+,n^-)}_{\mathcal{O}_{K}^{\mathbb{C}}}$, $s\in(0,1]$ such that $\tilde{\mathbf{A}}_s\to\tilde{\mathbf{A}}_0$ as $s\to0^+$, and
\begin{align}\label{d one-dimensional asymptotic behavior1}
\lim\limits_{s\to0^+}\lambda_n(\tilde{\mathbf{A}}_s)&=-\infty,\;n\leq n^+-n_0^+,\\\label{d one-dimensional asymptotic behavior2}
\lim\limits_{s\to0^+}\lambda_n(\tilde{\mathbf{A}}_s)&=
\lambda_{n-(n^+-n_0^+)}(\tilde{\mathbf{A}}_0),\;n >n^+-n_0^+.
\end{align}
\end{lemma}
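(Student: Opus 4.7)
\medskip
\noindent\textbf{Proof plan.} The plan is to exploit the block-diagonal structure of $(\pmb\omega_0,\tilde{\mathbf{A}}_0)$ to decouple the $d$-dimensional problem into $d$ independent scalar Sturm-Liouville problems, and then apply Proposition~\ref{1dimrefinement} coordinate by coordinate. Since $P_0,Q_0,W_0$ are diagonal and $\tilde{\mathbf{A}}_0=[-I_{2d}\,|\,B_{(n_0^0,n_0^+)}]$ has $B_{(n_0^0,n_0^+)}$ itself diagonal (each column $b_l$ being $0$, $e_l$, or $-e_l$), the boundary condition $(A\,|\,B)Y(a,b)=0$ splits coordinate-wise. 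For each $\ell=1,\dots,d$, the $\ell$-th scalar subproblem is $-(p_{\ell\ell}y_\ell')'+q_{\ell\ell}y_\ell=\lambda w_{\ell\ell}y_\ell$, with boundary datum at $a$ coming from row $\ell$ of $(A\,|\,B)$ and boundary datum at $b$ coming from row $\ell+d$: if the corresponding diagonal entry $B_{\ell\ell}$ (resp.\ $B_{(\ell+d)(\ell+d)}$) equals $0$ we obtain Dirichlet, and if it equals $\pm 1$ we obtain a Robin-type condition. The spectrum of $(\pmb\omega_0,\tilde{\mathbf{A}}_0)$ is therefore the multiplicity-counted disjoint union of these $d$ one-dimensional spectra, and this decoupling persists for any boundary condition whose $B$-part stays diagonal.

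Next I would construct the path. Set $i:=n^+-n_0^+$ and $j:=n^--n_0^-$, so $i+j=n_0^0-n^0\geq 1$. Pick disjoint subsets $K_+,K_-\subset\{k_1,\dots,k_{n_0^0}\}$ with $\sharp K_+=i$ and $\sharp K_-=j$. Define $\tilde{\mathbf{A}}_s\in\mathcal{O}_K^{\mathbb{C}}$ for $s\in(0,1]$ by keeping every entry of $\tilde{\mathbf{A}}_0$ fixed except replacing the zero column $b_l$ by $s e_l$ for $l\in K_+$ and by $-s e_l$ for $l\in K_-$. Then $B(\tilde{\mathbf{A}}_s)$ remains diagonal and $S_K(\tilde{\mathbf{A}}_s)$ is diagonal with signature exactly $(n^0,n^+,n^-)$, so $\tilde{\mathbf{A}}_s\in J^{(n^0,n^+,n^-)}_{\mathcal{O}_K^{\mathbb{C}}}$ for every $s\in(0,1]$ and $\tilde{\mathbf{A}}_s\to\tilde{\mathbf{A}}_0$ as $s\to 0^+$. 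Since diagonality of $B$ is preserved along the path, the $d$-dim problem stays decoupled for every $s$.

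The third step invokes Proposition~\ref{1dimrefinement}. For each $l\in K_+$ the scalar boundary datum affected by column $l$ (at $a$ if $l\leq d$, at $b$ if $l>d$) transitions from Dirichlet to a Robin condition of strength $s$; a direct translation to the $1$-dim charts shows this is exactly the passage $J^{(1,0,0)}\to J^{(0,1,0)}$, so Proposition~\ref{1dimrefinement}(ii) forces the lowest eigenvalue of that scalar subproblem to tend to $-\infty$ while the higher ones converge to the corresponding Dirichlet eigenvalues. For each $l\in K_-$ the passage is $J^{(1,0,0)}\to J^{(0,0,1)}$, for which $n^+-n_0^+=0$, so the scalar spectrum is fully continuous. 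Coordinates unaffected by $K_+\cup K_-$ are $s$-independent. Summing over subproblems, exactly $i$ eigenvalues of $(\pmb\omega_0,\tilde{\mathbf{A}}_s)$ diverge to $-\infty$ and the remainder stay uniformly bounded below; the limit of those remaining branches reassembles the full spectrum of $(\pmb\omega_0,\tilde{\mathbf{A}}_0)$.

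Finally, an ordering argument (only $i$ branches escape and all others are uniformly bounded below) shows that the diverging branches coincide with $\lambda_1(\tilde{\mathbf{A}}_s),\dots,\lambda_i(\tilde{\mathbf{A}}_s)$ for small $s>0$, giving \eqref{d one-dimensional asymptotic behavior1}. Then $\lambda_{i+1}$ is uniformly bounded below on the path, and Lemma~\ref{continuity-change-indices} applied with $m=i$, together with the identification of the limit of the remaining branches with the spectrum of $\tilde{\mathbf{A}}_0$, yields \eqref{d one-dimensional asymptotic behavior2}. The main obstacle I anticipate is the bookkeeping in this last step: one must verify that the sorted merge of the $d$ scalar spectra really produces the ordered $d$-dim spectrum and that the $i$ divergent branches correspond to the smallest $i$ indices rather than to interior ones, uniformly in a right-neighborhood of $s=0$.
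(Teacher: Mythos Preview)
Your proposal is correct and follows essentially the same route as the paper: construct the diagonal path $\tilde{\mathbf A}_s$ by perturbing the zero columns $b_{k_1},\dots,b_{k_{n_0^0}}$ to $\pm s e_l$, observe that $(\pmb\omega_0,\tilde{\mathbf A}_s)$ decouples into $d$ scalar Sturm--Liouville problems, and then invoke Proposition~\ref{1dimrefinement}. The paper's proof is terser and simply states that the asymptotics follow from Proposition~\ref{1dimrefinement} once the decoupling is noted; the ordering concern you flag is harmless, since once exactly $i$ branches go to $-\infty$ and all others are uniformly bounded below, the divergent ones are automatically the first $i$ in the sorted list for all small $s$.
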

\begin{proof}
$\tilde{\mathbf{A}}_s\in J^{(n^0,n^+,n^-)}_{\mathcal{O}_{K}^{\mathbb{C}}}$, $s\in(0,1]$, can be directly constructed by setting
\begin{align*}
\tilde{\mathbf{A}}_s=&[-I_{2d}\;|\;(b_{1}(s),b_{2}(s),\cdots,b_{2d}(s))]\in J^{(n^0,n^+,n^-)}_{\mathcal{O}_{K}^{\mathbb{C}}},
\end{align*}
where
\begin{align*}
 b_{l}(s)=&\begin{cases}  0 & \;\;\;{\rm if}\;\; l \in \{k_1,\cdots,k_{n^0}\}, \\
                      s e_l& \;\;\;{\rm if}\;\;l\in\{k_{n^0+1},\cdots,k_{n^0+n^+-n_0^+}\},\\
                      -s e_l& \;\;\;{\rm if}\;\;l\in\{k_{n^0+n^+-n_0^++1},\cdots,k_{n_0^0}\},\\
                      e_l &  \;\;\;{\rm if}\;\; l\in\{k_{n_0^0+1},\cdots,k_{n_0^0+n_0^+}\}\cup(\{1,\cdots,2d\}\backslash K),\\
                      -e_l &  \;\;\;{\rm if}\;\; l\in\{k_{n_0^0+n_0^++1},\cdots,k_{m_0}\},\end{cases}
\end{align*}
and $b_{l}(s):=(b_{1l}(s),\cdots,b_{2dl}(s))^T$. Since the Sturm-Liouville equation $\pmb\omega_0$ and  $\tilde{\mathbf{A}}_s$ are equivalent to $d$ one-dimensional Sturm-Liouville equations
\begin{align*}
-(p_{jj}y_j')'+q_{jj}y_j=\lambda w_{jj}y_j,\;\;{\rm on}\;\;[a,b],
\end{align*}
with boundary conditions
\begin{align*}
y_j(a)+b_{jj}(s)(p_{jj}y'_j)(a)=0,\;\;-y_j(b)+b_{d+j\;d+j}(s)(p_{jj}y'_j)(b)=0,
\end{align*}
where $1\leq j\leq d$.
Then  (\ref{d one-dimensional asymptotic behavior1}) and (\ref{d one-dimensional asymptotic behavior2}) hold by Proposition \ref{1dimrefinement}.
\end{proof}

Next, we consider other paths  in  $J^{(n_0^0,n_0^+,n_0^-)}_{\mathcal{O}_{K}^{\mathbb{C}}}$, which tend to $\tilde{\mathbf{A}}_0$ given in (\ref{independent boundary condition}).

\begin{lemma}\label{asymptotic behavoir for tilde A0}
Let  $\pmb\omega_0=(P_0,Q_0,W_0)$ be given in (\ref{independent equation}).  Then for $\tilde{\mathbf{A}}_0\in J^{(n_0^0,n_0^+,n_0^-)}_{\mathcal{O}_{K}^{\mathbb{C}}}$ defined in (\ref{independent boundary condition}) and for any path $\mathbf{A}_s\in J^{(n^0,n^+,n^-)}_{\mathcal{O}_{K}^{\mathbb{C}}}$, $s\in(0,1]$ such that $\mathbf{A}_s\to\tilde{\mathbf{A}}_0$ as $s\to0^+$, we have
\begin{align}\label{any path d one-dimensional asymptotic behavior1}
\lim\limits_{s\to0^+}\lambda_n(\mathbf{A}_s)&=-\infty,\;n\leq n^+-n_0^+,\\\label{any path d one-dimensional asymptotic behavior2}
\lim\limits_{s\to0^+}\lambda_n(\mathbf{A}_s)&=
\lambda_{n-(n^+-n_0^+)}(\tilde{\mathbf{A}}_0),\;n >n^+-n_0^+.
\end{align}
\end{lemma}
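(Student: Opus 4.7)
My plan is to combine the asymptotics along the specific path $\tilde{\mathbf{A}}_s$ furnished by Lemma \ref{one path from 1-dim} with three tools already established: continuity of every $\lambda_n$ on the layer $\Sigma_{n^0}^\mathbb{C}$ (Theorem \ref{omegatimesboundary condition continuity}), path connectedness of small neighborhoods inside the layer (Lemma \ref{Uvarepsilon conponent}), and local preservation of eigenvalue counts (Lemma \ref{local continuity of eigenvalues}). Set $m:=n^+-n_0^+$ throughout.

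\emph{Step 1 (identifying the survivors).} Fix $j\ge 1$ and let $q$ be the multiplicity of $\lambda_j(\tilde{\mathbf{A}}_0)$. Choose $r_1<r_2$, neither an eigenvalue of $\tilde{\mathbf{A}}_0$, isolating the group $\lambda_j(\tilde{\mathbf{A}}_0)=\cdots=\lambda_{j+q-1}(\tilde{\mathbf{A}}_0)$. By Lemma \ref{local continuity of eigenvalues} there is a neighborhood $U$ of $\tilde{\mathbf{A}}_0$ in which every point has exactly $q$ eigenvalues in $(r_1,r_2)$, neither endpoint being an eigenvalue. Shrinking $U$ to a set $U_\varepsilon$ from Lemma \ref{Uvarepsilon conponent} makes $U\cap J^{(n^0,n^+,n^-)}_{\mathcal{O}_K^\mathbb{C}}$ path connected. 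Since eigenvalues in $(r_1,r_2)$ have consecutive indices and every $\lambda_n$ is continuous on $\Sigma_{n^0}^\mathbb{C}$ (Theorem \ref{omegatimesboundary condition continuity}), the smallest index $i(\mathbf{B})$ with $\lambda_{i(\mathbf{B})}(\mathbf{B})\in(r_1,r_2)$ is a $\mathbb{Z}$-valued locally constant function on $U\cap J^{(n^0,n^+,n^-)}_{\mathcal{O}_K^\mathbb{C}}$, hence constant on this path-connected set. Lemma \ref{one path from 1-dim} pins this index to $m+j$ at $\mathbf{B}=\tilde{\mathbf{A}}_s$ for small $s$. Consequently $\lambda_{m+j}(\mathbf{A}_s)\in(r_1,r_2)$ for small $s$, and letting $r_1,r_2\to\lambda_j(\tilde{\mathbf{A}}_0)$ gives (\ref{any path d one-dimensional asymptotic behavior2}).

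\emph{Step 2 (forcing the escape).} Argue by contradiction: suppose $\lambda_n(\mathbf{A}_{s_k})\ge -M$ for some $n\le m$, some $M>0$, and a subsequence $s_k\to 0^+$. Pick $K$ large enough that $\lambda_K(\tilde{\mathbf{A}}_0)<\lambda_{K+1}(\tilde{\mathbf{A}}_0)$, then choose $r_1'<-M$ together with $r_2'\in(\lambda_K(\tilde{\mathbf{A}}_0),\lambda_{K+1}(\tilde{\mathbf{A}}_0))$, neither an eigenvalue of $\tilde{\mathbf{A}}_0$, so that $\tilde{\mathbf{A}}_0$ has exactly $K$ eigenvalues in $(r_1',r_2')$. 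Lemma \ref{local continuity of eigenvalues} forces the same count $K$ for $\mathbf{A}_{s_k}$ when $k$ is large. Step 1 places $\lambda_{m+1}(\mathbf{A}_{s_k}),\dots,\lambda_{m+K}(\mathbf{A}_{s_k})$ into $(r_1',r_2')$ for large $k$, while the contrary hypothesis and $\lambda_n\le\lambda_{m+1}$ force $\lambda_n(\mathbf{A}_{s_k})\in[-M,\lambda_{m+1}(\mathbf{A}_{s_k})]\subset(r_1',r_2')$ as a distinct eigenvalue (index $n<m+1$). This yields at least $K+1$ eigenvalues in $(r_1',r_2')$, contradicting the count $K$, and establishes (\ref{any path d one-dimensional asymptotic behavior1}).

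\emph{Main obstacle.} The crux is the index-constancy asserted in Step 1: it hinges on both the continuity of every eigenvalue on $\Sigma_{n^0}^\mathbb{C}$ (Theorem \ref{omegatimesboundary condition continuity}) and the path connectedness supplied by Lemma \ref{Uvarepsilon conponent}. Once these two ingredients are combined, the remainder of the argument is essentially topological: the asymptotic behavior of the distinguished path $\tilde{\mathbf{A}}_s$ (which is ultimately a $1$-dimensional phenomenon, by Lemma \ref{one path from 1-dim}) propagates via index-matching to every path in the same layer that converges to $\tilde{\mathbf{A}}_0$. The only minor technical care is to choose the intervals $(r_1,r_2)$ and $(r_1',r_2')$ so that their endpoints avoid the (at most countable) set of eigenvalues of $\tilde{\mathbf{A}}_0$.
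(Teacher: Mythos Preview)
Your argument is correct and uses the same four ingredients as the paper (Lemma~\ref{one path from 1-dim}, Theorem~\ref{omegatimesboundary condition continuity}, Lemma~\ref{Uvarepsilon conponent}, Lemma~\ref{local continuity of eigenvalues}), but the organization differs. The paper first proves the escape statement~(\ref{any path d one-dimensional asymptotic behavior1}) by an induction on $i\le m$: assuming $\lambda_1,\dots,\lambda_{i-1}\to-\infty$, a bounded subsequence of $\lambda_i$ would (via Lemmas~\ref{continuity principle}--\ref{continuity-change-indices}) force $\lambda_i(\mathbf{A}_{s_n})\to\lambda_1(\tilde{\mathbf{A}}_0)$, whereas Lemma~\ref{one path from 1-dim} gives $\lambda_i(\tilde{\mathbf{A}}_s)\to-\infty$; connecting the two points by a path in $U_0\cap J^{(n^0,n^+,n^-)}_{\mathcal{O}_K^{\mathbb C}}$ then makes $\lambda_i$ cross the forbidden value $r_1$. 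The survivor part~(\ref{any path d one-dimensional asymptotic behavior2}) is then deduced from boundedness of $\lambda_{m+1}$ and Lemma~\ref{continuity-change-indices}. You reverse the order: the observation that the minimal index $i(\mathbf B)$ landing in $(r_1,r_2)$ is a locally constant, hence constant, $\mathbb Z$-valued function on the path-connected set $U_\varepsilon\cap J^{(n^0,n^+,n^-)}_{\mathcal{O}_K^{\mathbb C}}$ is exactly the content of the paper's repeated ``$\lambda_i$ crosses $r_1$'' contradictions, but packaged once and for all; it yields~(\ref{any path d one-dimensional asymptotic behavior2}) directly, and then~(\ref{any path d one-dimensional asymptotic behavior1}) follows by the clean counting argument in your Step~2. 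Your route therefore dispenses with the induction and with Lemmas~\ref{continuity principle}--\ref{continuity-change-indices}, at the mild cost of isolating the index-constancy lemma explicitly. One small point worth stating: in Step~2 you should also require $r_1'<\lambda_1(\tilde{\mathbf A}_0)$ (not just $r_1'<-M$) so that $\tilde{\mathbf A}_0$ indeed has exactly $K$ eigenvalues in $(r_1',r_2')$; this is implicit in your phrase ``so that'', but deserves to be explicit.
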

\begin{proof}
If $n^+-n_0^+=0$, then it suffices to show that $\lambda_1$ is bounded from below on $\{\mathbf{A}_s:s\in(0,1]\}$ by Lemma \ref{continuity principle}.
Suppose otherwise, there exists a  sequence $\{s_n\}_{n=1}^\infty$ such that $s_n\to 0^+$ as $n\to\infty$ and
\begin{align}\label{to be contradict}
\lim\limits_{n\to\infty}\lambda_1({\mathbf{A}}_{s_n})=-\infty.
\end{align}
Let $r_1< r_2$  such that $\lambda_1(\tilde{\mathbf{A}}_0)$ with multiplicity $m_0$ is the only eigenvalue of $(\pmb\omega_0,\tilde{\mathbf{A}}_0)$ in $(r_1,r_2)$ and neither $r_1$ nor $r_2$ is an eigenvalue of $(\pmb\omega_0,\tilde{\mathbf{A}}_0)$.
By Lemma  \ref{local continuity of eigenvalues}, there exists a neighborhood $U_0$  of $\tilde{\mathbf{A}}_0$  in $\mathcal{O}_K^{\mathbb{C}}$ such that for each $\mathbf{A}\in U_0$, $(\pmb\omega_0,\mathbf{A})$ has exactly $m_0$ eigenvalues in $(r_1,r_2)$ and neither $r_1$ nor $r_2$ is an eigenvalue of $(\pmb\omega_0,\mathbf{A})$.
Choose $n_1\in\mathbb{N}$ such that
 \begin{align}\label{one-contradiction by otherwise}
 {\mathbf{A}}_{s_{n_1}}\in U_0\; {\rm and}\; \lambda_1({\mathbf{A}}_{s_{n_1}})<r_1
\end{align}
  by (\ref{to be contradict}). Lemma \ref{one path from 1-dim} tells us there exists $s'\in(0,1]$ such that
 \begin{align}\label{one-contradiction by one path from 1-dim}
 \tilde{\mathbf{A}}_{s'}\in U_0,\;{\rm and}\;\lambda_1(\tilde{\mathbf{A}}_{s'})>r_1.
 \end{align}
 Note that ${\mathbf{A}}_{s_{n_1}}, \tilde{\mathbf{A}}_{s'}\in U_0\cap J^{(n^0,n^+,n^-)}_{\mathcal{O}_{K}^{\mathbb{C}}}$, which can be chosen such that it is path connected by Lemma \ref{Uvarepsilon conponent}. Then we choose a path $\gamma_0$ in $U_0\cap J^{(n^0,n^+,n^-)}_{\mathcal{O}_{K}^{\mathbb{C}}}$ to connect ${\mathbf{A}}_{s_{n_1}}$ and $\tilde{\mathbf{A}}_{s'}$.
By Theorem \ref{omegatimesboundary condition continuity}, $\lambda_1$ is continuous on $\gamma_0$.
Then there exists $\tau_0\in(0,1)$ such that $\lambda_1(\gamma_0({\tau_0}))=r_1$ by (\ref{one-contradiction by otherwise})--(\ref{one-contradiction by one path from 1-dim}).
However, $r_1$   is not an eigenvalue of $(\pmb\omega_0,\mathbf{A})$ for any  $\mathbf{A}\in U_0$. This is a contradiction.

Let $n^+-n_0^+>0$. For any $1\leq i\leq n^+-n_0^+$, assume that
\begin{align}\label{n=i}
\lim\limits_{s\to0^+}\lambda_j(\mathbf{A}_s)=-\infty, \;\;1\leq j\leq i-1,
\end{align}
we show that
\begin{align}\label{n=i+1}
\lim\limits_{s\to0^+}\lambda_{i}(\mathbf{A}_s)=-\infty.
\end{align}
Suppose otherwise, there exists a sequence $\{ s^{(i)}_n\}_{n=1}^\infty\subset (0,1]$ such that $s_n^{(i)}\to 0^+$ as $n\to\infty$, and $\lambda_i$ is bounded from below on $\{\mathbf{A}_{ s^{(i)}_n}\}_{n=1}^\infty$. Then
$\lim\limits_{n\to\infty}\lambda_i(\mathbf{A}_{ s^{(i)}_n})$ $=\lambda_1(\tilde{\mathbf{A}}_0)$ by Lemma \ref{continuity principle} for $i=1$ and  by Lemma \ref{continuity-change-indices} for $i>1$.
Again from Lemma \ref{one path from 1-dim}, $\lim\limits_{s\to0^+}\lambda_i(\tilde{\mathbf{A}}_s)=-\infty$.
Thus we can choose  $\tilde{\mathbf{A}}_{ s'_1},\mathbf{A}_{s^{(i)}_{n_1}}\in U_0\cap J^{(n^0,n^+,n^-)}_{\mathcal{O}_{K}^{\mathbb{C}}}$ such that $\lambda_i(\tilde{\mathbf{A}}_{s'_1})<r_1<\lambda_i(\mathbf{A}_{ s^{(i)}_{n_1}})$ for
some
$s'_1\in(0,1]$ and  some  $n_1\in\mathbb{N}$. Then the choice of $r_1$ contradicts that
$\lambda_i(U_0\cap J^{(n^0,n^+,n^-)}_{\mathcal{O}_{K}^{\mathbb{C}}}):=\{\lambda_i(\mathbf{A}):\mathbf{A}\in U_0\cap J^{(n^0,n^+,n^-)}_{\mathcal{O}_{K}^{\mathbb{C}}}\}$ is connected by Theorem \ref{omegatimesboundary condition continuity} and Lemma \ref{Uvarepsilon conponent}.

To prove (\ref{any path d one-dimensional asymptotic behavior2}), it is sufficient to show that $\lambda_{n^+-n_0^++1}$ is bounded from below
on $U_0\cap J^{(n^0,n^+,n^-)}_{\mathcal{O}_{K}^{\mathbb{C}}}$ by Lemma \ref{continuity-change-indices}. Suppose otherwise, there exists $s_2'\in(0,1]$ such that  $\mathbf{A}_{s_2'}\in U_0\cap J^{(n^0,n^+,n^-)}_{\mathcal{O}_{K}^{\mathbb{C}}}$ and $\lambda_{n^+-n_0^++1}(\mathbf{A}_{s_2'})<r_1$. By Lemma \ref{one path from 1-dim}, there exists $s_3'\in(0,1]$ such that $\tilde{\mathbf{A}}_{s_3'}\in  U_0\cap J^{(n^0,n^+,n^-)}_{\mathcal{O}_{K}^{\mathbb{C}}}$  and $\lambda_{n^+-n_0^++1}(\tilde{\mathbf{A}}_{s_3'})>r_1$. Connect $\mathbf{A}_{s_2'}$ and $\tilde{\mathbf{A}}_{s_3'}$ by a path $\gamma_1$ in $U_0\cap J^{(n^0,n^+,n^-)}_{\mathcal{O}_{K}^{\mathbb{C}}}$. Thus there exists $\tau_1\in(0,1)$ such that $\lambda_{n^+-n_0^++1}(\gamma_1(\tau_1))=r_1$. However, $r_1$ is not an eigenvalue for any boundary condition in $U_0\cap J^{(n^0,n^+,n^-)}_{\mathcal{O}_{K}^{\mathbb{C}}}$, which is a contradiction.
\end{proof}

From Lemma \ref{asymptotic behavoir for tilde A0}, we have shown asymptotic behavior  of the $n$-th eigenvalue near $\tilde{\mathbf{A}}_0$ from all the directions  in
${\mathcal{O}_{K}^{\mathbb{C}}}$.
  Next, we consider other boundary conditions   in  $J^{(n_0^0,n_0^+,n_0^-)}_{\mathcal{O}_{K}^{\mathbb{C}}}$.

\begin{lemma}\label{asymptotic behavior A0}
Let  $\pmb\omega_0=(P_0,Q_0,W_0)$ be given in (\ref{independent equation}). Then for any $\mathbf{B}_0\in J^{(n_0^0,n_0^+,n_0^-)}_{\mathcal{O}_{K}^{\mathbb{C}}}$ and for any path $\mathbf{B}_s\in J^{(n^0,n^+,n^-)}_{\mathcal{O}_{K}^{\mathbb{C}}}$, $s\in(0,1]$ such that $\mathbf{B}_s\to{\mathbf{B}}_0$ as $s\to0^+$, we have
\begin{align}\label{any boundary condition path d one-dimensional asymptotic behavior1 to A0}
\lim\limits_{s\to0^+}\lambda_n(\mathbf{B}_s)&=-\infty,\;n\leq n^+-n_0^+,\\\label{any boundary condition path d one-dimensional asymptotic behavior2 to A0}
\lim\limits_{s\to0^+}\lambda_n(\mathbf{B}_s)&=
\lambda_{n-(n^+-n_0^+)}({\mathbf{B}}_0),\;n >n^+-n_0^+.
\end{align}
\end{lemma}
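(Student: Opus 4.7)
The plan is to bootstrap Lemma \ref{asymptotic behavoir for tilde A0} from the distinguished base point $\tilde{\mathbf{A}}_0$ to an arbitrary $\mathbf{B}_0$ in the same stratum $J^{(n_0^0, n_0^+, n_0^-)}_{\mathcal{O}_K^{\mathbb{C}}}$, using the path connectedness of both $J^{(n_0^0, n_0^+, n_0^-)}_{\mathcal{O}_K^{\mathbb{C}}}$ and $J^{(n^0, n^+, n^-)}_{\mathcal{O}_K^{\mathbb{C}}}$ (Lemma \ref{JC path connected}), the local structure of neighborhoods (Lemma \ref{Uvarepsilon conponent}), and the layer-wise continuity of $\lambda_n$ (Theorem \ref{omegatimesboundary condition continuity}). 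The argument naturally splits into two stages: first I construct a specific path $\hat{\mathbf{B}}_s \to \mathbf{B}_0$ inside $J^{(n^0, n^+, n^-)}_{\mathcal{O}_K^{\mathbb{C}}}$ along which the asymptotics can be verified, then I run the contradiction argument of Lemma \ref{asymptotic behavoir for tilde A0} verbatim with $\hat{\mathbf{B}}_s$ playing the role of $\tilde{\mathbf{A}}_s$.

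For the construction, choose a continuous arc $\eta:[0,1]\to J^{(n_0^0, n_0^+, n_0^-)}_{\mathcal{O}_K^{\mathbb{C}}}$ from $\tilde{\mathbf{A}}_0$ to $\mathbf{B}_0$. Since $GL(m_0,\mathbb{C})$ is connected, I lift $\eta$ to a continuous family $R(t)\in GL(m_0,\mathbb{C})$ with $R(0)=I_{m_0}$ and $S_K(\eta(t))=R(t)^*\hat{J}_0 R(t)$, where $\hat{J}_0=\mathrm{diag}(0_{n_0^0},I_{n_0^+},-I_{n_0^-})$. Embedding a signature perturbation $E_s=s\,\mathrm{diag}(I_{n^+-n_0^+},-I_{n^--n_0^-},0_{n^0})$ into the zero block, define the two-parameter family $\mathbf{H}_s(t)$ by $S_K(\mathbf{H}_s(t))=R(t)^*(\hat{J}_0+E_s)R(t)$. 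Congruence preserves signature, so $\mathbf{H}_s(t)\in J^{(n^0, n^+, n^-)}_{\mathcal{O}_K^{\mathbb{C}}}$ for $s>0$; by the choice $R(0)=I_{m_0}$, $\mathbf{H}_s(0)$ coincides with the path $\tilde{\mathbf{A}}_s$ of Lemma \ref{one path from 1-dim}, while $\hat{\mathbf{B}}_s:=\mathbf{H}_s(1)\to\mathbf{B}_0$ as $s\to 0^+$. To transport the asymptotics from $\mathbf{H}_s(0)$ to $\mathbf{H}_s(1)$, let $T_*\in[0,1]$ be the supremum of those $t$ for which (\ref{any boundary condition path d one-dimensional asymptotic behavior1 to A0})--(\ref{any boundary condition path d one-dimensional asymptotic behavior2 to A0}) hold at $\eta(t)$ along $s\mapsto\mathbf{H}_s(t)$. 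Lemma \ref{asymptotic behavoir for tilde A0} ensures $T_*>0$, and I show $T_*=1$ via an openness-plus-closedness argument: at each $t_0\leq T_*$, Lemma \ref{local continuity of eigenvalues} provides a spectral window $(r_1,r_2)$ and a neighborhood $U(t_0)$ of $\eta(t_0)$ in $\mathcal{O}_K^{\mathbb{C}}$ on which $r_1,r_2$ remain non-eigenvalues with constant eigenvalue count, Lemma \ref{Uvarepsilon conponent} yields that $U(t_0)\cap J^{(n^0, n^+, n^-)}_{\mathcal{O}_K^{\mathbb{C}}}$ is path connected, and the intermediate value theorem together with Theorem \ref{omegatimesboundary condition continuity} forbids any jump of $\lambda_n$ across $r_1$ inside this neighborhood, forwarding the asymptotics to all $t$ close to $t_0$.

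Once $\hat{\mathbf{B}}_s$ is in place, an arbitrary path $\mathbf{B}_s\to\mathbf{B}_0$ is treated exactly as in Lemma \ref{asymptotic behavoir for tilde A0}. If one of the limit statements fails, pick a spectral gap $(r_1,r_2)$ of $(\pmb\omega_0,\mathbf{B}_0)$ straddling the alleged limit value with neither endpoint an eigenvalue, use Lemma \ref{local continuity of eigenvalues} to shrink to a neighborhood $U_0$ on which the non-eigenvalue property persists, use Lemma \ref{Uvarepsilon conponent} to ensure $U_0\cap J^{(n^0, n^+, n^-)}_{\mathcal{O}_K^{\mathbb{C}}}$ is path connected, and join $\mathbf{B}_{s_m}$ (for large $m$) to $\hat{\mathbf{B}}_{s'}$ (for small $s'$) inside this set; Theorem \ref{omegatimesboundary condition continuity} together with the intermediate value theorem then forces $\lambda_n$ to take the value $r_1$ somewhere in $U_0$, contradicting the choice of $r_1$. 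The principal obstacle is the propagation step: eigenvalues of $(\pmb\omega_0,\mathbf{H}_s(t))$ depend jointly on $(s,t)$, and the limit $s\to 0^+$ has to be attained uniformly in $t$ over the compact interval $[0,1]$; the continuous congruence lift $R(t)$ is precisely what keeps the direction of degeneration consistent, so that the number of eigenvalues of $(\pmb\omega_0,\mathbf{H}_s(t))$ lying below $r_1$ does not make a hidden stratum-jump as $t$ moves along $\eta$.
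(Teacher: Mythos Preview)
Your proposal is correct and follows essentially the same route as the paper: connect $\mathbf{B}_0$ to the distinguished point $\tilde{\mathbf{A}}_0$ by a path $\eta$ in $J^{(n_0^0,n_0^+,n_0^-)}_{\mathcal{O}_K^{\mathbb{C}}}$, use a congruence lift $R(t)$ to manufacture a parallel family in $J^{(n^0,n^+,n^-)}_{\mathcal{O}_K^{\mathbb{C}}}$, and combine Lemma~\ref{asymptotic behavoir for tilde A0}, Lemma~\ref{local continuity of eigenvalues}, Lemma~\ref{Uvarepsilon conponent} and Theorem~\ref{omegatimesboundary condition continuity} with an intermediate-value contradiction. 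The only organizational difference is that the paper exploits the compactness of $\eta$ once to produce a single uniform tube $U_{\varepsilon_0}$ around $\eta$ on which $r_1$ is never an eigenvalue, and then connects $\mathbf{B}_{s_{n_1}}$ directly to a point near $\tilde{\mathbf{A}}_0$ inside that tube; you instead run a step-by-step openness/closedness propagation along $[0,1]$ and then treat an arbitrary path in a second pass. Both work, but the compactness argument is shorter and avoids having to verify closedness of the set of ``good'' $t$. One small phrasing point: the continuous lift $R(t)$ with $S_K(\eta(t))=R(t)^*\hat J_0 R(t)$ is not automatic for an \emph{arbitrary} arc $\eta$; it is cleanest (and this is what the paper does) to take for $\eta$ the specific path built in Lemma~\ref{JC path connected}, which by construction already comes as $\gamma(\tau)^*\hat J\gamma(\tau)$ with $\gamma(\tau)\in GL(m_0,\mathbb{C})$.
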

\begin{proof}
 By Lemma \ref{JC path connected}, there exists a path $\tilde\gamma$  in $J^{(n_0^0,n_0^+,n_0^-)}_{\mathcal{O}_{K}^{\mathbb{C}}}$ to connect $\mathbf{B}_0$ and $\tilde{\mathbf{A}}_0$, which is given in (\ref{independent boundary condition}). $\lambda_1$ is continuous on $\tilde\gamma$ by Theorem \ref{omegatimesboundary condition continuity}. Fix any $r_1<\min\lambda_1(\tilde \gamma)$. If $n^+-n_0^+=0$, it suffices to show that
$\lambda_1$ is bounded from below on $\{ \mathbf{B}_s:s\in(0,1]\}$ by Lemma \ref{continuity principle}.
Suppose otherwise, there exists a sequence $\{s_n\}_{n=1}^\infty$ such that ${s_n}\to0^+$ as $n\to\infty$ and
\begin{align}\label{any boundary condition in JC contradiction1}
\lim\limits_{n\to\infty}\lambda_1(\mathbf{B}_{s_n})=-\infty.
\end{align}
For any given $\mathbf{A}_s$ in  Lemma \ref{asymptotic behavoir for tilde A0},
\begin{align}\label{any boundary condition in JC contradiction2}
\lim\limits_{s\to0^+}\lambda_1(\mathbf{A}_{s})=\lambda_1(\tilde{\mathbf{A}}_0).
\end{align}
Choose $r_{2,\tau}>r_1$ such that $\lambda_1(\tilde \gamma(\tau))$ with multiplicity  $m_{1,\tau}$ is the only eigenvalue in $(r_1,r_{2,\tau})$ and $r_{2,\tau}$ is not an eigenvalue of $(\pmb\omega_0,\tilde\gamma(\tau))$ for each $\tau\in[0,1]$. Then by Lemma \ref{local continuity of eigenvalues}, for any given $\tau\in[0,1]$, there exists a neighborhood $U_\tau$  of $\tilde\gamma(\tau)$ in $\mathcal{O}_K^{\mathbb{C}}$ such that for each $\mathbf{A}\in U_\tau$, $(\pmb\omega_0,\mathbf{A})$ has exactly $m_{1,\tau}$ eigenvalues in $(r_1,r_{2,\tau})$ and neither  $r_1$ nor $r_{2,\tau}$ is its eigenvalue.
Set $U:=\cup_{\tau\in[0,1]}U_{\tau}$. Then $r_1$ is not an eigenvalue of $(\pmb\omega_0,\mathbf{A})$ for any $\mathbf{A}\in U$. Since $\tilde\gamma$ is compact, it is easy to see that $U_{\varepsilon_0}:=\{\mathbf{B}\in\mathcal{O}_K^\mathbb{C}:\|S(\mathbf{B})-S(\tilde{\mathbf{A}})\|_{\mathcal{M}_{2d\times 2d}}<\varepsilon_0,\tilde{\mathbf{A}}\in\tilde\gamma\}\subset U$ for some $\varepsilon_0>0$.
It follows from (\ref{any boundary condition in JC contradiction1})--(\ref{any boundary condition in JC contradiction2}) that
there exist $n_1\in\mathbb{N}$ and $\tilde s\in(0,1]$ such that $\mathbf{B}_{s_{n_1}},\mathbf{A}_{\tilde s}\in U_{\varepsilon_0}\cap J^{(n^0,n^+,n^-)}_{\mathcal{O}_{K}^{\mathbb{C}}}$ and
\begin{align}\label{contradiction as a result}
\lambda_1(\mathbf{B}_{s_{n_1}})<r_1<\lambda_1(\mathbf{A}_{\tilde s}).
\end{align}
We can construct a path $\tilde\gamma_0$ in $U_{\varepsilon_0}\cap J^{(n^0,n^+,n^-)}_{\mathcal{O}_{K}^{\mathbb{C}}}$ to connect $\mathbf{B}_{s_{n_1}}$ and $\mathbf{A}_{\tilde s}$ as follows. Define $s_{ij}(\tilde\gamma_1(\tau)):=s_{ij}(\tilde\gamma(\tau))$ when $i\notin K$ or $j\notin K, \tau\in[0,1]$. Let $m_0=\sharp(K)$. Denote  $S_K(\tilde\gamma(\tau))=M^*_\tau S_K(\tilde{\mathbf{A}}_{0}) M_\tau$ to be the path to connect  $S_K(\mathbf{B}_{0})$ and $S_K(\tilde{\mathbf{A}}_{0})$ such that $M_\tau\in GL(m_0,\mathbb{C}),\tau\in[0,1]$.
Then define $S_K(\tilde\gamma_1(\tau)):=M^*_\tau S_K({\mathbf{A}}_{\tilde s}) M_\tau$.
Note that  ${\mathbf{A}}_{\tilde s}$ can be chosen sufficiently close to $\tilde{\mathbf{A}}_{0}$ such that $\tilde\gamma_1\subset U_{\varepsilon_0}\cap J^{(n^0,n^+,n^-)}_{\mathcal{O}_{K}^{\mathbb{C}}}$.
 Following Lemma \ref{Uvarepsilon conponent}, connect $\tilde\gamma_1(0)$ and  $\mathbf{B}_{s_{n_1}}$ by $\tilde\gamma_2$ in $U_{\varepsilon_0}\cap J^{(n^0,n^+,n^-)}_{\mathcal{O}_{K}^{\mathbb{C}}}$. Combining $\tilde \gamma_i$, $i=1,2$, we get the desired path $\tilde\gamma_0$.
 $\lambda_1(\tilde\gamma_0)$ is connected by Theorem  \ref{omegatimesboundary condition continuity}. Thus (\ref{contradiction as a result}) contradicts that $r_1$ is not an eigenvalue for any boundary condition in $\tilde\gamma_0$.

If $n^+-n_0^+>0$, for any $1\leq i\leq n^+-n_0^+$, assume that
$\lim\limits_{s\to0^+}\lambda_j(\mathbf{B}_s)=-\infty, \;\;1\leq j\leq i-1,$
we show that
\begin{align}\label{n=i+1 any boundary on JC}
\lim\limits_{s\to0^+}\lambda_{i}(\mathbf{B}_s)=-\infty.
\end{align}
Suppose otherwise, there exists a sequence $\{ \hat s^{(i)}_n\}_{n=1}^\infty$ such that ${ \hat s^{(i)}_n}\to 0^+$ as $n\to\infty$ and $\lambda_i$ is bounded from below on $\{\mathbf{B}_{\hat s^{(i)}_n}\}_{n=1}^\infty$. Then
$\lim\limits_{n\to\infty}\lambda_i(\mathbf{B}_{\hat s^{(i)}_n})=\lambda_1({\mathbf{B}}_0)$ by Lemmas \ref{continuity principle} and \ref{continuity-change-indices}. Note that
 $\lim\limits_{s\to0^+}\lambda_i({\mathbf{A}}_s)=-\infty$, where ${\mathbf{A}}_s$ is given in  Lemma \ref{asymptotic behavoir for tilde A0}.
Thus we can choose  $\mathbf{B}_{\hat s^{(i)}_{n_1}}, {\mathbf{A}}_{\hat s'_1}\in U_{\varepsilon_0}\cap J^{(n^0,n^+,n^-)}_{\mathcal{O}_{K}^{\mathbb{C}}}$ such that
\begin{align}\label{to be a contradiction for any boundary}
\lambda_i({\mathbf{A}}_{\hat s'_1})<r_1<\lambda_i(\mathbf{B}_{ \hat s^{(i)}_{n_1}})
\end{align}
for some  $n_1\in\mathbb{N}$ and some $\hat s'_1\in (0,1]$.
 Similar as above, there exists a path $\tilde\gamma_3$ in $U_{\varepsilon_0}\cap J^{(n^0,n^+,n^-)}_{\mathcal{O}_{K}^{\mathbb{C}}}$ to connect $\mathbf{B}_{\hat s_{n_1}}$ and $\mathbf{A}_{\hat s'_1}$. $\lambda_i(\tilde\gamma_3)$ is connected by Theorem  \ref{omegatimesboundary condition continuity}.
 Then the choice of $r_1$ contradicts (\ref{to be a contradiction for any boundary}).

 Suppose  (\ref{any boundary condition path d one-dimensional asymptotic behavior2 to A0}) is not true. Then we can find $\mathbf{B}_{ \hat s_{2}}, \mathbf{A}_{\tilde s_2}\in U_{\varepsilon_0}\cap J^{(n^0,n^+,n^-)}_{\mathcal{O}_{K}^{\mathbb{C}}}$ such that
 $\lambda_{n^+-n_0^++1}(\mathbf{A}_{\tilde s_2})>r_1>\lambda_{n^+-n_0^++1}(\mathbf{B}_{\hat s_2})$. By constructing $\tilde\gamma_4$ in $U_{\varepsilon_0}\cap J^{(n^0,n^+,n^-)}_{\mathcal{O}_{K}^{\mathbb{C}}}$ to connect $\mathbf{B}_{\hat s_2}$ and $\mathbf{A}_{\tilde s_2}$, one can get a contradiction as above.  This completes the proof.
\end{proof}

 Now, for any $\pmb\omega=(P,Q,W)\in\Omega$, we give  asymptotic behavior  of the $n$-th eigenvalue near any boundary condition in $ J^{(n_0^0,n_0^+,n_0^-)}_{\mathcal{O}_{K}^{\mathbb{C}}}, n_0^0\geq 1,$ from all the directions  in
${\mathcal{O}_{K}^{\mathbb{C}}}$.

\begin{lemma}\label{asymptotic behavior C0 any sturm liouville equation}
Let $\pmb\omega=(P,Q,W)\in\Omega$. Then for any $\mathbf{C}_0\in J^{(n_0^0,n_0^+,n_0^-)}_{\mathcal{O}_{K}^{\mathbb{C}}}$ and for any path $\mathbf{C}_s\in J^{(n^0,n^+,n^-)}_{\mathcal{O}_{K}^{\mathbb{C}}}$, $s\in(0,1]$ such that $\mathbf{C}_s\to{\mathbf{C}}_0$ as $s\to0^+$, we have
\begin{align}\label{any boundary condition path d one-dimensional asymptotic behavior1 to C0}
\lim\limits_{s\to0^+}\lambda_n(\mathbf{C}_s)&=-\infty,\;n\leq n^+-n_0^+,\\\label{any boundary condition path d one-dimensional asymptotic behavior2 to C0}
\lim\limits_{s\to0^+}\lambda_n(\mathbf{C}_s)&=
\lambda_{n-(n^+-n_0^+)}({\mathbf{C}}_0),\;n >n^+-n_0^+.
\end{align}
\end{lemma}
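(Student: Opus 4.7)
The plan is to follow the template established by the proof of Lemma \ref{asymptotic behavior A0}, adding one outer layer that varies the Sturm-Liouville equation. The key observation is that $\Omega$ is convex (the positivity conditions $P(t)\ge\mu_1$, $W(t)\ge\mu_2$ are preserved under convex combinations), hence path-connected, and that $\mathbf{C}_0\in J^{(n_0^0,n_0^+,n_0^-)}_{\mathcal{O}_K^\mathbb{C}}\subset\Sigma_{n_0^0}^\mathbb{C}$, so by Theorem \ref{omegatimesboundary condition continuity} the functions $\lambda_n$ are continuous on $\Omega\times\Sigma_{n_0^0}^\mathbb{C}$. This enables me to transport the conclusions that Lemma \ref{asymptotic behavior A0} already delivers for the diagonal datum $(\pmb\omega_0,\mathbf{C}_0)$ over to the given $(\pmb\omega,\mathbf{C}_0)$ by means of a linear-interpolation path in $\Omega$.

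Concretely, I set $\pmb\omega_t:=(1-t)\pmb\omega_0+t\pmb\omega$, $t\in[0,1]$, and consider the compact arc $\Gamma:=\{(\pmb\omega_t,\mathbf{C}_0):t\in[0,1]\}$ in $\Omega\times\mathcal{O}_K^\mathbb{C}$. By Theorem \ref{omegatimesboundary condition continuity} the first eigenvalue $\lambda_1$ is continuous on $\Gamma$, hence bounded; I pick $r_1<\inf_\Gamma\lambda_1$ that is not an eigenvalue of any point of $\Gamma$. Using Lemma \ref{local continuity of eigenvalues} at every point of $\Gamma$ and extracting a finite subcover by compactness, I obtain an open tubular neighborhood $U\subset\Omega\times\mathcal{O}_K^\mathbb{C}$ of $\Gamma$ on which $r_1$ is not an eigenvalue of any Sturm-Liouville problem.

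For the blow-up statement (\ref{any boundary condition path d one-dimensional asymptotic behavior1 to C0}) I would induct on $n$. Assuming the assertion for indices $1,\ldots,i-1$ and that it fails at $n=i$, Lemmas \ref{continuity principle}--\ref{continuity-change-indices} force $\lambda_i(\pmb\omega,\mathbf{C}_{s_k})\to\lambda_1(\pmb\omega,\mathbf{C}_0)>r_1$ along some subsequence $s_k\to 0^+$. Applying Lemma \ref{asymptotic behavior A0} at $(\pmb\omega_0,\mathbf{C}_0)$ produces a path $\mathbf{A}_s\in J^{(n^0,n^+,n^-)}_{\mathcal{O}_K^\mathbb{C}}$ approaching $\mathbf{C}_0$ with $\lambda_i(\pmb\omega_0,\mathbf{A}_s)\to-\infty<r_1$. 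For $k$ large and $s$ small, both $(\pmb\omega,\mathbf{C}_{s_k})$ and $(\pmb\omega_0,\mathbf{A}_s)$ lie in $U\cap(\Omega\times J^{(n^0,n^+,n^-)}_{\mathcal{O}_K^\mathbb{C}})$, on opposite sides of $r_1$. I would then connect them by a path that first slides $\pmb\omega_t$ along the interpolation (at a fixed small perturbation of the boundary condition) and then deforms the boundary condition within $J^{(n^0,n^+,n^-)}_{\mathcal{O}_K^\mathbb{C}}$ via Lemma \ref{Uvarepsilon conponent}, keeping the whole path inside $U$. Continuity of $\lambda_i$ on $\Omega\times\Sigma_{n^0}^\mathbb{C}$ (Theorem \ref{omegatimesboundary condition continuity}) forces $\lambda_i$ to attain $r_1$ somewhere on this path, contradicting the choice of $U$. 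Statement (\ref{any boundary condition path d one-dimensional asymptotic behavior2 to C0}) then follows by applying Lemma \ref{continuity-change-indices}, after the analogous argument (now with $n=n^+-n_0^++1$) shows that $\lambda_{n^+-n_0^++1}$ is bounded from below along $\mathbf{C}_s$.

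The principal obstacle I anticipate is the construction of the joining path inside $U\cap(\Omega\times J^{(n^0,n^+,n^-)}_{\mathcal{O}_K^\mathbb{C}})$: the tubular neighborhood $U$ must have uniform transverse width along the entire arc $\Gamma$, and the boundary-condition portion of the path has to remain within a single path-connected component of $\mathcal{O}_K^\mathbb{C}\cap\Sigma_{n^0}^\mathbb{C}$. Compactness of $\Gamma$ supplies the uniform width via a finite subcover of the Lemma \ref{local continuity of eigenvalues} neighborhoods, while Lemma \ref{Uvarepsilon conponent} guarantees local path-connectedness within each stratum $J^{(n^0,n^+,n^-)}_{\mathcal{O}_K^\mathbb{C}}$; combining these should yield the required path.
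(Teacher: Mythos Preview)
Your proposal is correct and follows essentially the same approach as the paper: a linear interpolation $\zeta(\tau)$ between $\pmb\omega$ and the diagonal $\pmb\omega_0$, compactness of the arc $\{(\zeta(\tau),\mathbf{C}_0)\}$ to build a tube $\hat U$ on which a fixed $\hat r_1$ is never an eigenvalue, and then the same contradiction argument via Theorem~\ref{omegatimesboundary condition continuity}, Lemma~\ref{asymptotic behavior A0} at $(\pmb\omega_0,\mathbf{C}_0)$, and Lemma~\ref{Uvarepsilon conponent} for the path inside the stratum. The only cosmetic difference is the order in which you assemble the connecting path (equation first, boundary condition second) versus the paper (boundary condition first at $\pmb\omega_0$, then slide along $\zeta$); both orderings stay inside the tube by the uniform-width argument you already sketched.
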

\begin{proof}
Let
\begin{align*}
\zeta(\tau):=(\tau P_0+(1-\tau)P,\tau Q_0+(1-\tau)Q,\tau W_0+(1-\tau)W),\;\;\tau\in[0,1],
\end{align*}
where $(P_0,Q_0,W_0)$ is given in (\ref{independent equation}). Then it is easy to verify that $\zeta(\tau)\in\Omega, \tau\in [0,1]$. $\zeta\times \mathbf{C}_0$ means $\{(\zeta(\tau),\mathbf{C}_0):\tau\in[0,1]\}$ and
choose $\hat r_1<\min\{\lambda_1(\zeta\times\mathbf{C}_0)\}$.

 Let $\hat r_{2,\tau}>\hat r_1$ such that $\lambda_1(\zeta(\tau),\mathbf{C}_0)$ with multiplicity $\hat m_{1,\tau}$ is the only eigenvalue in $(\hat r_1,\hat r_{2,\tau})$ and $\hat r_{2,\tau}$ is not an eigenvalue of $(\zeta(\tau),\mathbf{C}_0)$ for each $\tau\in[0,1]$.  Then by Lemma \ref{local continuity of eigenvalues}, there exists a neighborhood $\hat U_\tau$  of $(\zeta(\tau),\mathbf{C}_0)$, $\tau\in[0,1]$, in $\Omega\times\mathcal{O}_K^{\mathbb{C}}$ such that each $(\pmb\sigma,\mathbf{C})\in\hat U_\tau$ has exactly $\hat m_{1,\tau}$ eigenvalues in $(\hat r_1,\hat r_{2,\tau})$ and neither  $\hat r_1$ nor $\hat r_{2,\tau}$ is its eigenvalue.
Set $\hat U:=\cup_{\tau\in[0,1]}\hat U_{\tau}$. Then $\hat r_1$ is not an eigenvalue of $(\pmb\sigma,\mathbf{C})\in  \hat U$.
By the compactness of $\zeta$, there exists a neighborhood $U_{\mathbf{C}_0}$ of $\mathbf{C}_0$ in $\mathcal{O}_K^{\mathbb{C}}$ such that
$(\zeta(\tau), U_{\mathbf{C}_0})\subset \hat U$ for all $\tau\in[0,1]$.

If $n^+-n_0^+=0$, then by Lemma \ref{asymptotic behavior A0},
for any $\mathbf{B}_s$, $s\in(0,1],$   such that $\mathbf{B}_s\to \mathbf{C}_0$ as $s\to0^+$,  $\lambda_1$  is continuous on $\{(\pmb\omega_0,\mathbf{B}_s):s\in(0,1]\}\cup\{(\pmb\omega_0,\mathbf{C}_0)\}$. Thus there exists $s_0\in(0,1]$
 such that $\mathbf{B}_{s_0}\in U_{\mathbf{C}_0}\cap J^{(n^0,n^+,n^-)}_{\mathcal{O}_{K}^{\mathbb{C}}}$ and $\lambda_1(\pmb\omega_0,\mathbf{B}_{s_0})>\hat r_1$. Suppose there exists $s_1\in(0,1]$ such that $\mathbf{C}_{s_1}\in U_{\mathbf{C}_0}\cap J^{(n^0,n^+,n^-)}_{\mathcal{O}_{K}^{\mathbb{C}}}$ and $\lambda_1(\pmb\omega,\mathbf{C}_{s_1})<\hat r_1$.
In $(\Omega\times J^{(n^0,n^+,n^-)}_{\mathcal{O}_{K}^{\mathbb{C}}})\cap \hat U$,
via constructing a path to connect $(\pmb\omega_0,\mathbf{B}_{s_0})$ and $(\pmb\omega_0,\mathbf{C}_{s_1})$ by Lemma \ref{Uvarepsilon conponent}  then a path to connect $(\pmb\omega_0,\mathbf{C}_{s_1})$ and $(\pmb\omega,\mathbf{C}_{s_1})$ through $\zeta$,
we get a path $\hat \gamma$  to connect $(\pmb\omega_0,\mathbf{B}_{s_0})$ and  $(\pmb\omega,\mathbf{C}_{s_1})$.
$\lambda_1(\hat\gamma)$ is connected  by Theorem \ref{omegatimesboundary condition continuity}. This contradicts  $\lambda_1(\pmb\omega,\mathbf{C}_{s_1})<\hat r_1<\lambda_1(\pmb\omega_0,\mathbf{B}_{s_0})$ and $\hat r_1$ is not an eigenvalue of
$(\pmb\sigma,\mathbf{C})\in  \hat \gamma\subset \hat U$.

Let $n^+-n_0^+>0$. Then for any $1\leq i \leq n^+-n_0^+$, assuming that we have shown that
$\lim\limits_{s\to0^+}\lambda_j(\pmb\omega,\mathbf{C}_s)=-\infty, 1\leq j\leq i-1$,
we now show that $\lim\limits_{s\to0^+}\lambda_i(\pmb\omega,\mathbf{C}_s)=-\infty$.
If it is not true, there exists $\{s_{n}\}_{n=1}^\infty$ such that  $\lambda_i$ is bounded from below on  $\{(\pmb\omega,\mathbf{C}_{s_n})\}_{n=1}^{\infty}$ and thus $\lim\limits_{n\to\infty}\lambda_{i}(\pmb\omega,\mathbf{C}_{s_n})=\lambda_1(\pmb\omega,\mathbf{C}_{0})$ by Lemmas \ref{continuity principle} and \ref{continuity-change-indices}. This, together with Lemma \ref{asymptotic behavior A0}, implies that
there exists  $s_2\in(0,1]$ and $n_1\in\mathbb{N}$ such that $\mathbf{B}_{s_2},\mathbf{C}_{s_{n_1}}\in U_{\mathbf{C}_0}\cap J^{(n^0,n^+,n^-)}_{\mathcal{O}_{K}^{\mathbb{C}}}$ and
\begin{align}\label{contradiction as a result different sturm liouville equation}
\lambda_i(\pmb\omega_0,\mathbf{B}_{s_2})<\hat r_1<\lambda_i(\pmb\omega, \mathbf{C}_{s_{n_1}}),
\end{align}
where $\mathbf{B}_{s}\in J^{(n^0,n^+,n^-)}_{\mathcal{O}_{K}^{\mathbb{C}}}, s\in(0,1]$ is any path such that
$\mathbf{B}_{s}\to \mathbf{C}_0$ as $s\to 0^+$. We can construct a path $\hat \gamma_1$ in  $(\Omega\times J^{(n^0,n^+,n^-)}_{\mathcal{O}_{K}^{\mathbb{C}}})\cap\hat U$ to connect $(\pmb\omega_0,\mathbf{B}_{s_2})$ and  $(\pmb\omega,\mathbf{C}_{s_{n_1}})$ as above.
$\lambda_i(\hat\gamma_1)$ is connected by Theorem \ref{omegatimesboundary condition continuity}. Thus (\ref{contradiction as a result different sturm liouville equation}) contradicts that $\hat r_1$ is not an eigenvalue for any $(\pmb\sigma,\mathbf{C})\in\hat\gamma_1$. This completes the proof of (\ref{any boundary condition path d one-dimensional asymptotic behavior1 to C0}).

 (\ref{any boundary condition path d one-dimensional asymptotic behavior2 to C0}) can be shown similarly as above and the proof is complete.
\end{proof}
To conclude this section, we combine the Sturm-Liouville equations and boundary conditions to get the following asymptotic behavior of the $n$-th eigenvalue:

\begin{theorem}\label{pmb-omega-mathbf{A}}
For any $(\pmb\omega,\mathbf{A})\in\Omega\times J^{(n_0^0,n_0^+,n_0^-)}_{\mathcal{O}_{K}^{\mathbb{C}}}$ and for any path $(\pmb\omega_s,\mathbf{A}_s)\in \Omega\times J^{(n^0,n^+,n^-)}_{\mathcal{O}_{K}^{\mathbb{C}}}$, $s\in(0,1]$ such that $(\pmb\omega_s,\mathbf{A}_s)\to(\pmb\omega,\mathbf{A})$ as $s\to0^+$, we have
\begin{align}\label{any boundary condition path d one-dimensional asymptotic behavior1 to omega A1}
\lim\limits_{s\to0^+}\lambda_n(\pmb\omega_s,\mathbf{A}_s)&=-\infty,\;n\leq n^+-n_0^+,\\\label{any boundary condition path d one-dimensional asymptotic behavior2 to omega A2}
\lim\limits_{s\to0^+}\lambda_n(\pmb\omega_s,\mathbf{A}_s)&=
\lambda_{n-(n^+-n_0^+)}(\pmb\omega,\mathbf{A}),\;n >n^+-n_0^+.
\end{align}
\end{theorem}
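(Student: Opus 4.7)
My plan is to mirror the strategy of Lemma \ref{asymptotic behavior C0 any sturm liouville equation} while additionally handling the variation of $\pmb\omega_s$. Let $\pmb\omega_0$ be the reference diagonal equation of (\ref{independent equation}) and set
\[
\zeta(\tau)=\tau\pmb\omega_0+(1-\tau)\pmb\omega,\quad \tau\in[0,1].
\]
Fix $\hat r_1<\min\{\lambda_1(\zeta(\tau),\mathbf{A}):\tau\in[0,1]\}$. Compactness of $\zeta$ together with Lemma \ref{local continuity of eigenvalues} yields an open neighborhood $\hat U\subset\Omega\times\mathcal{O}_K^\mathbb{C}$ of $\zeta\times\{\mathbf{A}\}$ on which $\hat r_1$ is never an eigenvalue, and a neighborhood $U_{\mathbf{A}}$ of $\mathbf{A}$ with $\zeta\times U_{\mathbf{A}}\subset\hat U$. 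Since $(\pmb\omega_s,\mathbf{A}_s)\to(\pmb\omega,\mathbf{A})$, for all sufficiently small $s>0$ one has $\mathbf{A}_s\in U_{\mathbf{A}}$ and the whole convex segment from $\pmb\omega_0$ to $\pmb\omega_s$ lies in the $\Omega$-projection of $\hat U$.

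\textbf{Divergence of the first $n^+-n_0^+$ branches.} Argue by induction on $n$, for $1\le n\le n^+-n_0^+$. Assume that $\lambda_j(\pmb\omega_s,\mathbf{A}_s)\to -\infty$ for all $1\le j\le n-1$. If $\lambda_n(\pmb\omega_{s_k},\mathbf{A}_{s_k})$ were bounded below along some subsequence $s_k\to 0^+$, Lemmas \ref{continuity principle} and \ref{continuity-change-indices} would force $\lambda_n(\pmb\omega_{s_k},\mathbf{A}_{s_k})\to\lambda_1(\pmb\omega,\mathbf{A})>\hat r_1$, so $\lambda_n(\pmb\omega_{s_k},\mathbf{A}_{s_k})>\hat r_1$ for $k$ large. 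Next I would apply Lemma \ref{one path from 1-dim} with fixed equation $\pmb\omega_0$ to produce $s_0>0$ with $(\pmb\omega_0,\tilde{\mathbf{A}}_{s_0})\in\hat U\cap(\Omega\times J^{(n^0,n^+,n^-)}_{\mathcal{O}_K^\mathbb{C}})$ and $\lambda_n(\pmb\omega_0,\tilde{\mathbf{A}}_{s_0})<\hat r_1$. I would then build a path in $(\Omega\times J^{(n^0,n^+,n^-)}_{\mathcal{O}_K^\mathbb{C}})\cap\hat U$ joining $(\pmb\omega_0,\tilde{\mathbf{A}}_{s_0})$ and $(\pmb\omega_{s_k},\mathbf{A}_{s_k})$ by concatenating a boundary-condition path inside $U_{\mathbf{A}}\cap J^{(n^0,n^+,n^-)}_{\mathcal{O}_K^\mathbb{C}}$ (available by Lemma \ref{Uvarepsilon conponent} applied to the current layer) with the $\Omega$-segment of $\zeta$ at the fixed boundary condition $\mathbf{A}_{s_k}$. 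Along this path Theorem \ref{omegatimesboundary condition continuity} gives continuity of $\lambda_n$, contradicting the fact that $\hat r_1$ separates the endpoint values while never being an eigenvalue on the path.

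\textbf{Finite limits of the remaining branches.} For $n>n^+-n_0^+$, having established the first claim, Lemma \ref{continuity-change-indices} reduces (\ref{any boundary condition path d one-dimensional asymptotic behavior2 to omega A2}) to the boundedness below of $\lambda_{n^+-n_0^++1}$ along $\{(\pmb\omega_s,\mathbf{A}_s)\}_{s\in(0,1]}$. Failing this, a subsequence where this indexed eigenvalue drops below $\hat r_1$ can be paired with a point from the Lemma \ref{one path from 1-dim} path at which the same indexed eigenvalue exceeds $\hat r_1$; joining them inside $\hat U\cap(\Omega\times J^{(n^0,n^+,n^-)}_{\mathcal{O}_K^\mathbb{C}})$ by the same concatenation as above and invoking Theorem \ref{omegatimesboundary condition continuity} yields an identical contradiction.

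\textbf{Main obstacle.} The conceptual content is fully inherited from Lemma \ref{asymptotic behavior C0 any sturm liouville equation}; the delicate part is ensuring that the concatenated path genuinely lies in $\hat U$ while simultaneously crossing the layer $J^{(n^0,n^+,n^-)}_{\mathcal{O}_K^\mathbb{C}}$ in its boundary-condition factor and sliding from $\pmb\omega_0$ to $\pmb\omega_{s_k}$ in its equation factor. I would handle this by first choosing $s$ so small that $\mathbf{A}_s$ lies deep inside $U_{\mathbf{A}}$ (so that Lemma \ref{Uvarepsilon conponent} produces a boundary-condition path staying in $U_{\mathbf{A}}$), and only then performing the $\zeta$-homotopy at the fixed boundary condition $\mathbf{A}_{s_k}\in U_{\mathbf{A}}$, for which $\hat U$ was built to avoid $\hat r_1$ throughout.
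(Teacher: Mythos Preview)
Your overall architecture is right, but there is a concrete gap in the choice of reference point. You invoke Lemma \ref{one path from 1-dim} to produce $(\pmb\omega_0,\tilde{\mathbf{A}}_{s_0})$ with $\lambda_n(\pmb\omega_0,\tilde{\mathbf{A}}_{s_0})<\hat r_1$, and then claim this point lies in $\hat U$ and that $\tilde{\mathbf{A}}_{s_0}\in U_{\mathbf{A}}\cap J^{(n^0,n^+,n^-)}_{\mathcal{O}_K^\mathbb{C}}$. But the path $\tilde{\mathbf{A}}_s$ from Lemma \ref{one path from 1-dim} converges to the specific separated boundary condition $\tilde{\mathbf{A}}_0$ of (\ref{independent boundary condition}), not to the general $\mathbf{A}\in J^{(n_0^0,n_0^+,n_0^-)}_{\mathcal{O}_K^\mathbb{C}}$ in the theorem. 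When $\mathbf{A}\neq\tilde{\mathbf{A}}_0$ there is no reason for $\tilde{\mathbf{A}}_{s_0}$ to lie in $U_{\mathbf{A}}$, so your concatenated path cannot be kept inside $\hat U$ and the contradiction does not close. The same problem recurs in the ``finite limits'' step.

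The repair is simple and in fact shortens the argument. Instead of going back to $(\pmb\omega_0,\tilde{\mathbf{A}}_0)$, use Lemma \ref{asymptotic behavior C0 any sturm liouville equation} directly at the limit equation $\pmb\omega$: for the path $(\pmb\omega,\mathbf{A}_s)$ (same boundary conditions, equation frozen at $\pmb\omega$) that lemma already gives the full asymptotics (\ref{any boundary condition path d one-dimensional asymptotic behavior1 to omega A1})--(\ref{any boundary condition path d one-dimensional asymptotic behavior2 to omega A2}). This furnishes reference points $(\pmb\omega,\mathbf{A}_{s_1})$ with the desired inequality relative to $\hat r_1$, and these points are automatically close to $(\pmb\omega,\mathbf{A})$. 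The contradiction then only requires connecting $(\pmb\omega,\mathbf{A}_{s_1})$ to $(\pmb\omega_{s_2},\mathbf{A}_{s_2})$ inside a small neighborhood $U$ of $(\pmb\omega,\mathbf{A})$ in $\Omega\times\mathcal{O}_K^\mathbb{C}$, via a boundary-condition path from Lemma \ref{Uvarepsilon conponent} followed by the short $\Omega$-segment from $\pmb\omega$ to $\pmb\omega_{s_2}$ at fixed $\mathbf{A}_{s_2}$. This is exactly the paper's route; the detour through $\pmb\omega_0$ and $\zeta$ becomes unnecessary once Lemma \ref{asymptotic behavior C0 any sturm liouville equation} (rather than Lemma \ref{one path from 1-dim}) supplies the reference.
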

\begin{proof}
The proof is also by a contradiction argument as that used in the proof of  Lemmas \ref{asymptotic behavoir for tilde A0}, \ref{asymptotic behavior A0} and \ref{asymptotic behavior C0 any sturm liouville equation}. So we just sketch the proof here. It follows from  Lemma \ref{asymptotic behavior C0 any sturm liouville equation} that for the path $(\pmb\omega,\mathbf{A}_s)$, we have the asymptotic behavior (\ref{any boundary condition path d one-dimensional asymptotic behavior1 to omega A1})--(\ref{any boundary condition path d one-dimensional asymptotic behavior2 to omega A2}) with $\pmb\omega_s$ replaced by $\pmb\omega$. Thus, to get the contradiction, it suffices to construct a path  in $(\Omega\times J^{(n^0,n^+,n^-)}_{\mathcal{O}_{K}^{\mathbb{C}}})\cap U$ to connect $(\pmb\omega,\mathbf{A}_{s_1})$ and $(\pmb\omega_{s_2},\mathbf{A}_{s_2})$ for some $s_1,s_2\in(0,1]$, where
$U$ is a sufficiently small neighborhood of  $(\pmb\omega,\mathbf{A})$ in $\Omega\times\mathcal{O}_K^\mathbb{C}$. This can be obtained due to Lemma \ref{Uvarepsilon conponent}.
\end{proof}

\section{Derivative formulas and comparison  of  eigenvalues }\label{Derivative formulas and comparison  of  eigenvalues with respect to boundary conditions}

In this section, we give derivative formula of a continuous simple eigenvalue branch with respect to  boundary conditions. Then we obtain some inequalities of eigenvalues.

\begin{theorem}\label{Directional derivative of eigenvalue}
 Fix $\pmb\omega\in\Omega$. Let $\lambda_*(\mathbf{A})$ be a simple eigenvalue for $\mathbf{A}$, $y$ be a normalized eigenfunction for $\lambda_*(\mathbf{A})$,  and $\Lambda$ be a continuous simple
eigenvalue branch defined on a neighborhood of $\mathbf A$ in $\mathcal O_{K}^{\mathbb C}$
through $\lambda_*$ for some $K\subset\{1,\cdots,2d\}$.
Then $\Lambda$ is differentiable at $\mathbf{A}$ and its Frechet derivative formula is
given by
\begin{align}\label{derivative formula}
d\Lambda\big|_{\mathbf A}(H)=u^*H{u}
\end{align}
for any $2d\times 2d$ Hermitian matrix $H$, where $u=(u_1,\cdots,u_{2d})^T$ is given by
\begin{align*}
u_i=\left\{
\begin{aligned}
\rho_{i},&\ i\in \{1,\cdots,2d\}\setminus K,\\ \eta_{i},&\ i\in K,
\end{aligned}\right.
\end{align*}
and
$
(\rho_{1},\cdots,\rho_{2d},\eta_{1},\cdots,\eta_{2d}):=(-y(a)^T,y(b)^T,(Py'(a))^T,(Py'(b))^T).
$
\end{theorem}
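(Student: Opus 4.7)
The plan is to perturb along the linear path $S(\mathbf A(t)) = S(\mathbf A) + tH$ inside the chart $\mathcal O_K^{\mathbb C}$, compute $\dot\Lambda(0)$ by Lagrange's identity, and identify the resulting boundary form with $u^*Hu$ using the algebraic structure of (\ref{coordinate}). Differentiability of $\Lambda(t) := \Lambda(\mathbf A(t))$ at $t=0$ follows from the implicit function theorem: $\Gamma_{(\pmb\omega,\mathbf A(t))}(\lambda) = \det(A(t)\Phi_\lambda + B(t)\Psi_\lambda)$ is polynomial in $(t,\lambda)$, and simplicity of $\lambda_*$ together with the equivalence of analytic and geometric multiplicities forces $\partial_\lambda\Gamma(\lambda_*,\mathbf A)\neq 0$. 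Combined with Lemma \ref{continuous choice of eigenfunctions} and the normalization $\langle y_t,y_t\rangle_W = 1$, this produces a differentiable family $y_t$ with $y_0=y$, so the boundary vectors $v(t) := (-y_t(a);\, y_t(b))$ and $w(t) := (Py_t'(a);\, Py_t'(b))$ are differentiable at $t=0$.

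Next, differentiating the Sturm--Liouville equation $-(Py_t')' + Qy_t = \Lambda(t)Wy_t$ at $t=0$, pairing against $y$, integrating by parts twice on $[a,b]$, and using the unperturbed eigenvalue equation together with the normalization to cancel the interior contributions, I would obtain the Lagrange identity
$$\dot\Lambda(0) \;=\; \bigl[(Py')^*\dot y - y^*(P\dot y')\bigr]_a^b \;=\; w^*\dot v - v^*\dot w.$$

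Finally, I encode (\ref{coordinate}) via the diagonal projection $\Pi_K$ onto coordinates indexed by $K$: direct inspection yields $A(t) = -\Pi_K + (S+tH)(I - \Pi_K)$, $B(t) = (S+tH)\Pi_K + (I - \Pi_K)$ and $u = (I - \Pi_K)v + \Pi_K w$. Differentiating $A(t)v(t) + B(t)w(t) = 0$ at $t=0$ gives
$$A\dot v + B\dot w \;=\; -\bigl(\dot A\,v + \dot B\,w\bigr) \;=\; -H\bigl((I-\Pi_K)v + \Pi_K w\bigr) \;=\; -Hu.$$
The main technical point, which I expect to be the crux of the proof, is the pair of algebraic identities $w^* = -u^*A$ and $v^* = u^*B$: rewriting the boundary condition as $Su = \Pi_K v - (I-\Pi_K)w$ and taking adjoints gives $u^*S = v^*\Pi_K - w^*(I-\Pi_K)$, after which a direct expansion shows
$$w^* + u^*A \;=\; w^*(I-\Pi_K) + u^*S(I-\Pi_K) \;=\; w^*(I-\Pi_K) + \bigl(v^*\Pi_K - w^*(I-\Pi_K)\bigr)(I-\Pi_K) \;=\; 0,$$
and symmetrically $-v^* + u^*B = 0$. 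Combining the three ingredients,
$$\dot\Lambda(0) \;=\; w^*\dot v - v^*\dot w \;=\; -u^*(A\dot v + B\dot w) \;=\; u^*Hu,$$
which is exactly (\ref{derivative formula}); since the right-hand side is real-linear in $H$, this is the Fr\'echet derivative.
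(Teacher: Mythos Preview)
Your proof is correct and follows the same skeleton as the paper's: Lagrange's identity converts the eigenvalue increment into a boundary symplectic form, and the chart relation (which the paper writes as $Su+v=0$ with $v_i=\eta_i$ for $i\notin K$, $v_i=-\rho_i$ for $i\in K$) collapses that form to $u^*Hu$. Your projection identities $w^*=-u^*A$, $v^*=u^*B$ are exactly the adjoint of the paper's componentwise verification that $Y^*J\tilde Y=X^*J\tilde X$.

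The one substantive difference is in the order of operations. The paper works with the \emph{finite} increment, obtaining $(\Lambda(\mathbf B)-\Lambda(\mathbf A))\langle\tilde y,y\rangle_W = u^*H\tilde u$ for the perturbed eigenfunction $\tilde y$, and only then lets $H\to 0$; this requires nothing more than the continuity of $\tilde y$ supplied by Lemma~\ref{continuous choice of eigenfunctions}. Your version differentiates first, so you need $t\mapsto y_t$ to be differentiable at $t=0$. That is true (fundamental solutions are analytic in $\lambda$, and the null-vector of the rank-$(2d-1)$ matrix $A(t)\Phi_{\Lambda(t)}+B(t)\Psi_{\Lambda(t)}$ can be chosen smoothly via cofactors), but it is not literally contained in Lemma~\ref{continuous choice of eigenfunctions}, so your sentence ``this produces a differentiable family $y_t$'' is doing a bit more work than it acknowledges. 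The paper's finite-difference route sidesteps this entirely.
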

\begin{proof}
Let $\mathbf A:=[A(S)\;|\;B(S)]$ and $\mathbf B:=[A(S+H)\;|\;B(S+H)]\in \mathcal{O}^\mathbb{C}_K$,  which correspond to Hermitian matrices $S$ and $S+H$, respectively.   Then there exists an eigenfunction $\tilde y=y_{\Lambda(\mathbf{B})}$ for $\Lambda(\mathbf{B})$ such that $P\tilde y'\to Py'$ as $\mathbf{B}\to \mathbf{A}$ in the sense of Lemma \ref{continuous choice of eigenfunctions}.

Note that  $\tilde y$ and ${y}$ satisfy
\begin{align*}
-(P\tilde{y}')'+Q\tilde{y}=\lambda(\mathbf{B})W\tilde{y},\;\;
-(P y')'+Q y=\lambda(\mathbf{A})W y.
\end{align*}
Thus
\begin{align*}
(\lambda(\mathbf{B})-\lambda(\mathbf{A}))\int_a^b(W\tilde y, y)_ddt
=\left(\begin{pmatrix}0&-I_{2d}\\I_{2d}&0\end{pmatrix}\tilde Y(a,b),
Y(a,b)\right)_{4d}.
\end{align*}
Denote
\begin{align*}
Y:=Y(a,b),\;\;
X:=\begin{pmatrix}u\\v\end{pmatrix},
\end{align*}
where $v:=(v_1,\cdots,v_{2d})^T$ is given by
\begin{align*}
v_i=\left\{
\begin{aligned}
 \eta_{i},&\ i\in\{1,\cdots,2d\}\setminus K,\\-\rho_{i},&\ i\in K.
\end{aligned}\right.
\end{align*}
$\tilde Y, \tilde{X}, \tilde u, \tilde v$ can be defined similarly.
Then for $\mathbf A$ and $\mathbf B$,
we have
\begin{align*}
[A(S)\;|\;B(S)]Y=&[S\;|\;I_{2d}]X\\
=&[S\;|\;I_{2d}]\begin{pmatrix}u\\v\end{pmatrix}\\
=&Su+v=0,\\
[A(S+H)\;|\;B(S+H)]\tilde Y=&[S+H\;|\;I_{2d}]\tilde X\\
=&[S+H\;|\;I_{2d}]\begin{pmatrix}\tilde u\\ \tilde v\end{pmatrix}\\=&(S+H)\tilde u+\tilde v=0.
\end{align*}
It follows that
\begin{align*}
(\lambda(\mathbf{B})-\lambda(\mathbf{A}))\int_a^b(W\tilde y, y)_ddt
=&Y^*\begin{pmatrix}0&-I_{2d}\\I_{2d}&0\end{pmatrix}\tilde{Y}\\
=&\sum_{i=1}^{2d}(\bar{\eta}_i\tilde\rho_i-\bar{\rho}_i\tilde\eta_i)\\
=&\sum_{i\in K}(-\bar u_i\tilde v_i+\bar v_i\tilde u_i)+\sum_{i\notin K}(\bar v_i\tilde u_i-\bar u_i\tilde v_i)\\
=&\sum_{i=1}^{2d}(\bar v_i\tilde u_i-\bar u_i\tilde v_i)\\
=&X^*\begin{pmatrix}0&-I_{2d}\\I_{2d}&0\end{pmatrix}\tilde{X}\\
=&(u^*,v^*)\begin{pmatrix}0&-I_{2d}\\I_{2d}&0\end{pmatrix}\begin{pmatrix}\tilde{u}\\ \tilde{v}\end{pmatrix}\\
=&(u^*,-u^*S)\begin{pmatrix}0&-I_{2d}\\I_{2d}&0\end{pmatrix}\begin{pmatrix}\tilde{u}\\ -(S+H)\tilde{u}\end{pmatrix}\\
=&u^*H\tilde{u}.
\end{align*}
Letting $H\to0$, we get (\ref{derivative formula}).
\end{proof}
The positive direction of continuous eigenvalue branches is given in the following result, which is a direct consequence of Theorems \ref{ equivalence of three multiplicities of an eigenvalue} and \ref{Directional derivative of eigenvalue}.

\begin{corollary}\label{corollary on Directional derivative of eigenvalue}
 Fix $\pmb\omega\in\Omega$. Let $\lambda_*(\mathbf{A})$ be an eigenvalue for $\mathbf{A}$  and $\Lambda$ be a continuous
eigenvalue branch defined on a neighborhood $U$ of $\mathbf A$ in $\mathcal O_{K}^{\mathbb C}$
through $\lambda_*$.
Then $$\Lambda(\mathbf{A})\leq\Lambda(\mathbf{B})$$
if $\mathbf{B}\in U$ and $S(\mathbf{B})-S(\mathbf{A})$ is non-negative.
\end{corollary}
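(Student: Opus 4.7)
\medskip
The plan is to reduce the monotonicity claim to Theorem \ref{Directional derivative of eigenvalue} by interpolating linearly between $\mathbf{A}$ and $\mathbf{B}$ in the Hermitian-matrix parameter $S$. Set $H := S(\mathbf{B}) - S(\mathbf{A})$, which is Hermitian and positive semi-definite by hypothesis. For $t \in [0,1]$, let $\gamma(t) \in \mathcal{O}_K^{\mathbb{C}}$ be the boundary condition whose Hermitian parameter in the chart (\ref{coordinate}) is $S(\mathbf{A}) + tH$. After shrinking $U$ if necessary, $\gamma([0,1]) \subset U$, so that $f(t) := \Lambda(\gamma(t))$ is well-defined and continuous on $[0,1]$, with $f(0) = \Lambda(\mathbf{A})$ and $f(1) = \Lambda(\mathbf{B})$. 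It therefore suffices to show that $f$ is non-decreasing on $[0,1]$.

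Fix $t_0 \in [0,1]$ and let $m$ denote the multiplicity of $f(t_0)$ as an eigenvalue of $(\pmb\omega, \gamma(t_0))$. By Theorem \ref{ equivalence of three multiplicities of an eigenvalue} the analytic and geometric multiplicities coincide, so Lemma \ref{continuous eigenvalue branch} produces exactly $m$ continuous eigenvalue branches $\Lambda_1,\ldots,\Lambda_m$ through $f(t_0)$ on a neighborhood $V \subset U$ of $\gamma(t_0)$, one of which must be $\Lambda$. When $m = 1$, Theorem \ref{Directional derivative of eigenvalue} together with the chain rule yields directly
\begin{equation*}
f'(t_0) \;=\; d\Lambda\big|_{\gamma(t_0)}(H) \;=\; u(t_0)^* H\, u(t_0) \;\geq\; 0,
\end{equation*}
since $H$ is positive semi-definite. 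Thus $f$ is non-decreasing in any neighborhood of a simple point.

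For $m \geq 2$ the branches $\Lambda_i$ need not be simple at $\gamma(t_0)$, so Theorem \ref{Directional derivative of eigenvalue} cannot be invoked pointwise. Here I would exploit the joint real-analyticity of $\Gamma_{(\pmb\omega, \gamma(t))}(\lambda) = \det(A(t)\Phi_\lambda + B(t)\Psi_\lambda)$ in $(t,\lambda) \in [0,1]\times \mathbb{R}$: combined with Theorem \ref{ equivalence of three multiplicities of an eigenvalue}, the order of $f(t_0)$ as a zero of $\Gamma_{(\pmb\omega, \gamma(t_0))}$ equals $m$, and standard Rellich--Kato-type analytic perturbation along the real-analytic segment $\gamma$ forces the collision locus $\{t : \Lambda_i(\gamma(t)) = \Lambda_j(\gamma(t)) \text{ for some }i\neq j\}$ to be a closed discrete subset of a neighborhood of $t_0$. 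Away from this discrete set each $\Lambda_i\circ\gamma$ is simple and the previous paragraph applies, giving $(\Lambda_i\circ\gamma)'\geq 0$ on the complement; continuity of each $\Lambda_i\circ\gamma$ then propagates non-decreasing monotonicity across the isolated collision points, and in particular $f$ is non-decreasing on the whole interval $[0,1]$.

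The main technical obstacle is the multiple-eigenvalue case: showing that the pointwise inequality $f'\geq 0$ on the simple locus promotes to global monotonicity of the continuous branch $\Lambda\circ\gamma$. Theorem \ref{ equivalence of three multiplicities of an eigenvalue} is essential because it identifies the number of continuous branches guaranteed by Lemma \ref{continuous eigenvalue branch} with the order of $f(t_0)$ as a zero of the entire function $\Gamma_{(\pmb\omega,\gamma(t_0))}$, preventing any hidden branch from carrying a negative derivative. As an alternative that avoids analytic perturbation theory, one can perturb $H$ to $H + \varepsilon I_{2d}$ with $\varepsilon > 0$, use Lemma \ref{local continuity of eigenvalues} to see that eigenvalues of $(\pmb\omega, \gamma_\varepsilon(t))$ converge to those of $(\pmb\omega, \gamma(t))$ as $\varepsilon \to 0^+$, and exploit the fact that a strictly positive direction keeps all continuous branches strictly increasing at simple points; then pass to the limit.
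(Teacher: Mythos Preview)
Your approach is exactly what the paper has in mind: it states only that the corollary ``is a direct consequence of Theorems \ref{ equivalence of three multiplicities of an eigenvalue} and \ref{Directional derivative of eigenvalue}'' and gives no further argument, so your linear interpolation in $S$ and use of the derivative formula are the intended details.

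One step does not stand as written. In the multiple-eigenvalue case you assert that Rellich--Kato analytic perturbation forces the collision locus to be discrete, but this is false when two analytic eigenvalue branches coincide identically along the segment (for instance whenever the Sturm--Liouville problem has a symmetry producing a persistently degenerate eigenvalue). In that situation there is no open set on which $\Lambda_i\circ\gamma$ is simple, so Theorem \ref{Directional derivative of eigenvalue} never applies. The fix is straightforward and still uses only the ingredients the paper cites: Rellich's theorem for one-parameter analytic families of self-adjoint operators yields real-analytic eigenvalue branches $\mu_1(t),\ldots,\mu_m(t)$ \emph{together with} real-analytic normalized eigenfunctions $y_j(\cdot,t)$. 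The computation in the proof of Theorem \ref{Directional derivative of eigenvalue} then goes through verbatim for each analytic pair $(\mu_j,y_j)$, since simplicity was needed there only to produce a continuously varying eigenfunction via Lemma \ref{continuous choice of eigenfunctions}; Rellich now supplies one directly. This gives $\mu_j'(t)=u_j(t)^*H\,u_j(t)\ge 0$ for every $j$, so each $\mu_j$ is non-decreasing. The ordered continuous branches $\Lambda_i\circ\gamma$ of Lemma \ref{continuous eigenvalue branch} are obtained by sorting the $\mu_j$, hence are non-decreasing as well, and the role of Theorem \ref{ equivalence of three multiplicities of an eigenvalue} is precisely to guarantee that all $m$ analytic branches are accounted for.
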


\begin{remark}
 Theorem \ref{Directional derivative of eigenvalue} (it is required that $\lambda_n(\mathbf{A})$ is simple) and Corollary \ref{corollary on Directional derivative of eigenvalue}  also hold true when $\lambda_n$ is continuous  on a neighborhood $U$ of $\mathbf{A}$ in $\mathcal O_{K}^{\mathbb C}$.
\end{remark}

\bigskip

\noindent\textbf{Acknowledgements}\bigskip

H. Zhu sincerely thanks  Prof. Y. Long for his instructive advice. X. Hu is supported in part  by NSFC (No. 11425105, 11790271). H. Zhu is supported in part by CPSF (No. 2018M630266).

\bigskip

\noindent
Email addresses: \newline xjhu@sdu.edu.cn (X. Hu), \newline
201511242@mail.sdu.edu.cn (L. Liu),
\newline 201790000005@sdu.edu.cn (L. Wu),\newline haozhu@nankai.edu.cn (H. Zhu).

\end{document}